\def\ge{\geqslant}
\def\le{\leqslant}
\def\a{\alpha}
\def\g{\gamma}
\def\G{\Gamma}
\def\d{\delta}
\def\D{\Delta}
\def\e{\varepsilon}
\def\s{\sigma}
\def\t{\tau}
\def\k{\kappa}
\def\l{\lambda}
\def\i{^{-1}}
\def\<{\langle}
\def\>{\rangle}
\newcommand{\bJ}{\mathbf J}
\newcommand{\bA}{\mathbf A}
\newcommand{\bG}{\mathbf G}
\newcommand{\bM}{\mathbf M}
\newcommand{\bP}{\mathbf P}
\newcommand{\bN}{\mathbf N}
\newcommand{\bS}{\mathbf S}
\newcommand{\bT}{\mathbf T}
\newcommand{\Gr}{\mathrm{Gr}}
\newcommand{\Fl}{\mathrm{Fl}}
\newcommand{\rs}{\mathrm{rs}}
\newcommand{\BA}{\ensuremath{\mathbb {A}}\xspace}
\newcommand{\BC}{\ensuremath{\mathbb {C}}\xspace}
\newcommand{\BF}{\ensuremath{\mathbb {F}}\xspace}
\newcommand{{\BG}}{\ensuremath{\mathbb {G}}\xspace}
\newcommand{{\BK}}{\ensuremath{\mathbb {K}}\xspace}
\newcommand{\BN}{\ensuremath{\mathbb {N}}\xspace}
\newcommand{\BQ}{\ensuremath{\mathbb {Q}}\xspace}
\newcommand{\BR}{\ensuremath{\mathbb {R}}\xspace}
\newcommand{\BS}{\ensuremath{\mathbb {S}}\xspace}
\newcommand{\BZ}{\ensuremath{\mathbb {Z}}\xspace}
\def\bH{\mathbf H}
\newcommand{\CA}{\ensuremath{\mathcal {A}}\xspace}
\newcommand{\CB}{\ensuremath{\mathcal {B}}\xspace}
\newcommand{\CI}{\ensuremath{\mathcal {I}}\xspace}
\newcommand{\CK}{\ensuremath{\mathcal {K}}\xspace}
\newcommand{\CL}{\ensuremath{\mathcal {L}}\xspace}
\newcommand{\CO}{\ensuremath{\mathcal {O}}\xspace}
\newcommand{\CP}{\ensuremath{\mathcal {P}}\xspace}
\newcommand{\Ad}{{\mathrm{Ad}}}
\newcommand{\ad}{{\mathrm{ad}}}
\DeclareMathOperator{\Aut}{Aut}
\newcommand{\af}{\mathrm{af}}
\newcommand{\ex}{{\mathrm{ex}}}
\newcommand{\id}{\ensuremath{\mathrm{id}}\xspace}
\DeclareMathOperator{\rank}{rank}
\newcommand{\reg}{{\mathrm{reg}}}
\def\tW{\tilde W}
\def\tS{\tilde \BS}
\def\kk{\mathbf k}
\DeclareMathOperator{\supp}{supp}
\newtheorem{theorem}{Theorem}
\newtheorem{proposition}[theorem]{Proposition}
\newtheorem{lemma}[theorem]{Lemma}
\theoremstyle{definition}
\newtheorem{example}[theorem]{Example}
\newtheorem{remark}[theorem]{Remark}
\newtheoremstyle{query}%
{}{}
{\color{red}}
{}
{\sffamily\bfseries}{:}{12pt}
{}
\theoremstyle{query}
\newtheorem{aq}{Author Query/Comment}
\newcommand{\baq}{\begin{aq}}
\newcommand{\eaq}{\end{aq}}
\numberwithin{equation}{section}
\numberwithin{theorem}{section}
\renewcommand{\to}{%
   \ifbool{@display}{\longrightarrow}{\rightarrow}%
   }
\let\shortmapsto\mapsto
\renewcommand{\mapsto}{%
   \ifbool{@display}{\longmapsto}{\shortmapsto}%
   }
\newlength{\olen}
\newlength{\ulen}
\newlength{\xlen}
\newcommand{\xra}[2][]{%
   \ifbool{@display}%
      {\settowidth{\olen}{$\overset{#2}{\longrightarrow}$}%
       \settowidth{\ulen}{$\underset{#1}{\longrightarrow}$}%
       \settowidth{\xlen}{$\xrightarrow[#1]{#2}$}%
       \ifdimgreater{\olen}{\xlen}%
          {\underset{#1}{\overset{#2}{\longrightarrow}}}%
          {\ifdimgreater{\ulen}{\xlen}%
             {\underset{#1}{\overset{#2}{\longrightarrow}}}
             {\xrightarrow[#1]{#2}}}}%
      {\xrightarrow[#1]{#2}}
   }
\newcommand{\xyra}[2][]{%
   \settowidth{\xlen}{$\xrightarrow[#1]{#2}$}%
   \ifbool{@display}%
      {\settowidth{\olen}{$\overset{#2}{\longrightarrow}$}%
       \settowidth{\ulen}{$\underset{#1}{\longrightarrow}$}%
       \ifdimgreater{\olen}{\xlen}%
          {\mathrel{\xymatrix@M=.12ex@C=3.2ex{\ar[r]^-{#2}_-{#1} &}}}%
          {\ifdimgreater{\ulen}{\xlen}%
             {\mathrel{\xymatrix@M=.12ex@C=3.2ex{\ar[r]^-{#2}_-{#1} &}}}
             {\mathrel{\xymatrix@M=.12ex@C=\the\xlen{\ar[r]^-{#2}_-{#1} &}}}}}%
      {\mathrel{\xymatrix@M=.12ex@C=\the\xlen{\ar[r]^-{#2}_-{#1} &}}}%
   }
\newcommand{\xla}[2][]{%
   \ifbool{@display}%
      {\settowidth{\olen}{$\overset{#2}{\longleftarrow}$}%
       \settowidth{\ulen}{$\underset{#1}{\longleftarrow}$}%
       \settowidth{\xlen}{$\xleftarrow[#1]{#2}$}%
       \ifdimgreater{\olen}{\xlen}%
          {\underset{#1}{\overset{#2}{\longleftarrow}}}%
          {\ifdimgreater{\ulen}{\xlen}%
             {\underset{#1}{\overset{#2}{\longleftarrow}}}
             {\xleftarrow[#1]{#2}}}}%
      {\xleftarrow[#1]{#2}}
   }
\newcommand{\isoarrow}{%
   \ifbool{@display}{\overset{\sim}{\longrightarrow}}{\xrightarrow\sim}%
   }
\begin{document}

\title[]{On affine Lusztig varieties\\[.6cm]Sur les vari\'et\'es de Lusztig affines}
\author[Xuhua He]{Xuhua He}
\address{Department of Mathematics and New Cornerstone Science Laboratory, The University of Hong Kong, Pokfulam, Hong Kong, Hong Kong SAR, China}
\email{xuhuahe@hku.hk}

\thanks{}

\keywords{Affine Lusztig varieties, affine Deligne--Lusztig varieties, loop groups, affine flag variety, affine Grassmannian}
\subjclass[2010]{22E35,22E67}

\date{\today}

\begin{abstract}
Affine Lusztig varieties encode the orbital integrals of Iwahori--Hecke functions and serve as building blocks for the (conjectural) theory of affine character sheaves. We establish a close relationship between affine Lusztig varieties and affine Deligne--Lusztig varieties. Consequently, we give an explicit nonemptiness pattern and dimension formula for affine Lusztig varieties in most cases. 

\vskip 0.3cm

Les vari\'et\'es de Lusztig affines codent les int\'egrales orbitales des fonctions d'Iwahori--Hecke et servent d'\'el\'ements de base pour la théorie (conjecturale) des faisceaux caract\`eres affines. Nous établissons une relation \'etroite entre les variétés de Lusztig affines et les vari\'et\'es de Deligne--Lusztig. En cons\'equence, nous donnons un modèle explicite de non-vacuit\'e et une formule de dimension pour les vari\'et\'es de Lusztig affines dans la plupart des cas. 
\end{abstract}

\maketitle


\section{Introduction}

\subsection{Deligne--Lusztig varieties and Lusztig varieties} Let $\bH$ be a connected reductive group over a finite field $\BF_q$. Let $H=\bH(\bar \BF_q)$ and $\operatorname{Fr}$ be the Frobenius endomorphism on $H$. Let $B$ be an $F$-stable Borel subgroup of $H$ and $W$ be the Weyl group of $H$. The Deligne--Lusztig variety is defined by $$X_w=\{h B; h \i \operatorname{Fr}(h) \in B \dot w B\},$$ where $w \in W$ and $\dot w$ is a representative of $w$ in $G$. It is the subvariety of the flag variety $H/B$ consisting of points with fixed relative position with its image under the Frobenius endomorphism.  Deligne--Lusztig varieties play a crucial role in the representation theory of finite groups of Lie type~\cite{DL}. 

Lusztig varieties were introduced in 1977 by Lusztig~\cite{Lu79} at the {\it LMS symposium on representations of Lie groups} in Oxford. They are defined by replacing the Frobenius endomorphism in the definition of $X_w$ with the conjugation action by a given regular semisimple element $h \in H$. It was shown in~\cite{Lu79} that the character values arising from the cohomology of  Deligne--Lusztig varieties are ``universal invariants'', which also make sense over complex numbers. The definition of Lusztig varieties was generalized to an arbitrary element $h \in H$ in Lusztig's theory of character sheaves~\cite{Lu-char}. Character sheaves can be defined in two different ways: via  ``admissible complexes'' and via  Lusztig varieties. It is a deep result that the two definitions coincide. Abreu and Nigro ~\cite{AN} used Lusztig varieties in their geometric approach to characters of Hecke algebras. The homology classes of Deligne--Lusztig varieties and those of Lusztig varieties were studied by Kim ~\cite{Kim}.

\subsection{Affine Lusztig varieties} The main subjects of this paper are the affine analogs of Lusztig varieties. 

Let $\kk$ be an algebraically closed field and $L=\kk(\!(\e)\!)$ be the field of the Laurent series (equal characteristic case) or $W(\kk)[1/p]$ (mixed characteristic case) if $\kk$ is characteristic $p$, where $W(\kk)$ is the ring of $p$-typical Witt vectors. Let $\bG$ be a connected reductive group over $L$. Let $\breve G=\bG(L)$. Let $\Gr=\breve G/\CK^{\mathrm{sp}}$ be the affine Grassmannian of $\breve G$ and $\Fl=\breve G/\CI$ be the affine flag variety. We define the affine Lusztig varieties 
\begin{gather*} 
Y^{\bG}_\mu(\g)=\{g \CK^{\mathrm{sp}}; g \i \g g \in \CK^{\mathrm{sp}} t^\mu \CK^{\mathrm{sp}}\}, \\
Y_w^{\bG}(\g)=\{g \CI \in \Fl; g \i \g g \in \CI \dot w \CI\}
\end{gather*}
in the affine Grassmannian and the affine flag variety, respectively. Here $\g$ is a regular semisimple element in $\breve G$, $\mu$ is a dominant coweight of $\breve G$, and $w$ is an element in the Iwahori--Weyl group of $\breve G$. Lusztig first introduced affine Lusztig varieties in~\cite{Lu11}. Affine Lusztig varieties in the affine Grassmannian were also studied by Kottwitz and Viehmann ~\cite{KV}. In the literature,  affine Lusztig varieties are also called  generalized affine Springer fibers or  Kottwitz--Viehmann varieties. 

In equal characteristic setting,  affine Lusztig varieties are locally closed subschemes of the affine Grassmannian and affine flag variety, equipped with a reduced scheme structure. In mixed characteristic setting, we consider affine Lusztig varieties as perfect schemes in the sense of Zhu~\cite{Zhu} and Bhatt--Scholze~\cite{BS}, i.e., as locally closed perfect subschemes of  $p$-adic flag varieties. 

Affine Lusztig varieties in the affine Grassmannian and the affine flag variety arise naturally in the study of orbital integrals of spherical Hecke algebras and Iwahori--Hecke algebras. Roughly speaking, based on the Grothendieck-Lefschetz trace formula, one may interpret the orbital integrals of spherical Hecke algebras (resp. Iwahori--Hecke algebras) as point-counting problems on the associated affine Lusztig varieties in the affine Grassmannian (resp. the affine flag variety). We refer to Yun's lecture notes~\cite{Yun}, and recent preprint of Chi~\cite[\S 5.3]{Chi24} for some formulas cononecting the affine Springer fibers and orbital integrals and the thesis work of G. Wang~\cite[Proposition 11.2.5]{Wang} for some formulas connecting the affine Lusztig varieties in the affine Grassmannian and the orbital integrals. 

Affine Lusztig varieties also serve as building blocks for the (conjectural) theory of affine character sheaves. Namely, we consider the map
\[
m: \breve G \times^{\CI} \CI \dot w \CI \to \breve G, \qquad (g, g') \mapsto g g' g \i,
\] where $\breve G \times^{\CI} \CI \dot w \CI$ is the quotient of $\breve G \times \CI \dot w \CI$ by the $\CI$-action defined by $i \cdot (g, g')=(g i \i, i g' i \i)$. The fibers of this map are the affine Lusztig varieties in the affine flag variety. This map is the affine analog of Lusztig's map to define the character sheaves for reductive groups over algebraically closed fields. One may define a similar map by replacing $\CI$ with $\CK^{\mathrm{sp}}$. It is interesting to realize Lusztig's unipotent almost characters of $p$-adic groups~\cite{Lu12} using  affine Lusztig varieties. 

\subsection{Main result} It is a fundamental question to determine the nonemptiness pattern and dimension formula for affine Lusztig varieties. 

This question was solved for affine Lusztig varieties in the affine Grassmannian of a split connected reductive group in equal characteristic case (under a mild assumption on the residue characteristic) by Bouthier and Chi ~\cite{Bou15,BC,Chi}. Their method relies on a global argument using the Hitchin fibration and does not work in mixed characteristic case (e.g., for $Y_\mu(\g)$ for a ramified element $\g$). See~\cite[Remark 1.2.3]{Chi}. Little is known for nonsplit groups or for the affine flag case. 

The main purpose of this paper is to study  affine Lusztig varieties in the affine flag variety of any (not necessarily split) connected reductive group in equal and mixed characteristic cases. Our approach is very different from  previous approaches. The key idea is to establish a close relationship between  affine Lusztig varieties and their Frobenius-twisted analogs,  affine Deligne--Lusztig varieties. Our approach is purely local and thus works also in mixed characteristic case. We will focus on the affine flag case, since the affine Grassmannian case can be deduced from this. 

We first give a quick review of affine Deligne--Lusztig varieties. Let $F'$ be a  non-archimedean local field and $\breve F'$ be the completion of the maximal unramified extension of $F'$. Let $\bG'$ be a connected reductive group over $F'$, $\breve G'=\bG'(\breve F')$ and $\s$ be the Frobenius endomorphism on $\breve G'$. Let $\CI'$ be a $\s$-stable Iwahori subgroup of $\breve G'$. Let $b \in \breve G'$ and $w$ be an element in the Iwahori--Weyl group of $\breve G'$. The affine Deligne--Lusztig variety $X_w(b)$ is defined by $$X^{\bG'}_w(b)=\{g \CI' \in \Fl'; g \i b \s(g) \in \CI' \dot w \CI'\}.$$ The notion of an affine Deligne--Lusztig variety was first introduced by Rapoport ~\cite{Ra}. It plays an important role in arithmetic geometry and the Langlands program. Affine Deligne--Lusztig varieties have been studied extensively in the past two decades, and we now have a much better understanding of these varieties than of affine Lusztig varieties. The nonemptiness pattern and the dimension formula for  affine Deligne--Lusztig varieties are completely known in the affine Grassmannian case and are known for most pairs $(w, b)$ in the affine flag case. We refer to the survey article~\cite{He-ICM} for a detailed discussion. 

The structure of  affine Deligne--Lusztig varieties, in general, is quite complicated, and affine Lusztig varieties are even more complicated, partially because,  in the loop group, the classification of  ordinary conjugacy classes is more complicated than the classification of  Frobenius-twisted conjugacy classes. In the special case where $w=1$, the associated affine Deligne--Lusztig variety is either empty or a discrete set. The associated affine Lusztig variety is the affine Springer fiber $\Fl^\g$ and is, in general, finite-dimensional (but not zero-dimensional). The dimension formula for affine Springer fiber was obtained by Bezrukavnikov~\cite{Be} in equal characteristic case and Chi~\cite{Chi24} in mixed characteristic case.

The main result of this paper, roughly speaking, shows that the difference between affine Lusztig varieties and  affine Deligne--Lusztig varieties comes from the affine Springer fibers. 

\begin{theorem}[Theorem~\ref{thm:le}]
Suppose that the group $\bG'$ over $F'$ is associated with the group $\bG$ over $L$. Let $\g$ be a regular semisimple element of $\breve G$ and $[b]=f_{\bG, \bG'}(\{\g\})$. Then, for any $w \in \tW$, we have 
\begin{enumerate}
    \item $Y^{\bG}_w(\g) \neq \emptyset$ if and only if $X^{\bG'}_w(b) \neq \emptyset$;

    \item if $X^{\bG'}_w(b) \neq \emptyset$, then $\dim Y^{\bG}_w(\g)=\dim X^{\bG'}_w(b)+\dim Y^{\bG}_{\g}$.
\end{enumerate}

In particular, $Y^{\bG}_w(\g)$ is either an empty set or finite-dimensional. 
\end{theorem}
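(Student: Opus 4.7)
The plan is to construct, for each $w \in \tW$, a natural morphism $\pi \colon Y^\bG_w(\g) \to X^{\bG'}_w(b)$ whose fibers are uniformly isomorphic (or at least equidimensional) to the affine Springer fiber $Y^\bG_\g$; the dimension formula then follows from the additivity of dimensions in an equidimensional fibration. Throughout, I would work on the level of perfect schemes in the mixed-characteristic setting, following Zhu and Bhatt--Scholze, so that the argument is uniform across both the equal- and mixed-characteristic cases.

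First, I would exploit the left action of the centralizer $Z_{\breve G}(\g)$ on $Y^\bG_w(\g)$: if $g^{-1}\g g \in \CI \dot w \CI$ and $j$ centralizes $\g$, then $(jg)^{-1}\g(jg) = g^{-1}\g g$, so this centralizer acts on $Y^\bG_w(\g)$ by left multiplication and preserves the defining condition. Because $\g$ is regular semisimple, $Z_{\breve G}(\g)$ is a maximal torus of $\breve G$, and the orbit under this torus of a single Iwahori coset in $Y^\bG_1(\g) = Y^\bG_\g$ recovers the affine Springer fiber as a natural ``fiber direction,'' leaving the affine Deligne--Lusztig variety to be produced in the ``transverse direction.''

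Next, I would use the associated group $\bG'$ and the class map $f_{\bG,\bG'}$, which by the way the association is defined should yield a canonical identification between $Z_{\breve G}(\g)$ and the $\s$-centralizer of $b$ and between the Iwahori--Weyl groups of $\breve G$ and $\breve G'$, to construct $\pi$ explicitly. The rough idea is to choose a canonical representative of the conjugacy class of $\g$ of the form $\g = t \cdot \g_0$ with $\g_0$ playing the role of a ``Frobenius-like element.'' Under this decomposition, the conjugation equation $g^{-1}\g g \in \CI \dot w \CI$ in $\breve G$ should translate into a $\s$-conjugation equation $g'^{-1} b \s(g') \in \CI' \dot w \CI'$ in $\breve G'$, up to an ambiguity taking values in $Z_{\breve G}(\g)$. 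The map $\pi$ then sends $g\CI$ to the corresponding coset $g'\CI'$; its fiber over a point is a torsor for $Z_{\breve G}(\g)$ modulo its intersection with the Iwahori, i.e., exactly $Y^\bG_\g$.

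The main obstacle is producing $\pi$ in a way that is both globally well-defined and compatible with the perfect-scheme framework: one cannot invoke Lie-algebraic linearizations or exponentials in mixed characteristic, so the change of coordinates turning $g^{-1}\g g$ into $g'^{-1} b \s(g')$ must be made purely group-theoretically. A reasonable strategy is to work stratum-by-stratum along a natural stratification of $Y^\bG_w(\g)$ (e.g., by the relative position of $g\CI$ with respect to a fixed ``apartment'' of the centralizer), build $\pi$ on each stratum from a local choice of representative, and then patch the pieces via $Z_{\breve G}(\g)$-equivariance. One must also verify surjectivity of $\pi$ onto $X^{\bG'}_w(b)$ (reflecting that $[b] = f_{\bG,\bG'}(\{\g\})$ is truly the correct $\s$-conjugacy class) and that the fibers have dimension $\dim Y^\bG_\g$ independent of the base point, which is the heart of the comparison.
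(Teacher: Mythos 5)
Your proposal hinges on constructing a morphism $\pi\colon Y^{\bG}_w(\g)\to X^{\bG'}_w(b)$ whose fibers are copies of $Y^{\bG}_{\g}$, and this is exactly the step that fails; the paper itself emphasizes that the relation between affine Lusztig varieties and affine Deligne--Lusztig varieties is \emph{not} obtained by defining a map from one to the other, and that there are reasons to believe no such direct geometric relation exists. Concretely: (i) the proposed translation of the conjugation equation $g^{-1}\g g\in\CI\dot w\CI$ into a $\s$-conjugation equation $g'^{-1}b\s(g')\in\CI'\dot w\CI'$ ``up to an ambiguity in $Z_{\breve G}(\g)$'' has no group-theoretic basis --- the two equations behave completely differently (Lang's theorem makes all elements of $\breve G'$ $\s$-conjugate into $\CI'\dot w\CI'$-type strata in a controlled way, while ordinary conjugacy classes of $\breve G$ are classified by the non-injective invariant $f$, and $f_{\bG,\bG'}$ collapses infinitely many conjugacy classes onto one $\s$-conjugacy class); (ii) when $\bG$ is defined over $\BC((\e))$ the associated group $\bG'$ must live over a different field, so $Y_w(\g)$ and $X_w(b)$ are (perfect) schemes over different base fields and literally no morphism between them exists; (iii) the fiber description is wrong: for regular semisimple $\g$ the affine Springer fiber $Y_\g$ is not a torsor under $Z_{\breve G}(\g)$ modulo its intersection with $\CI$ (it is typically not homogeneous and has many irreducible components), and the symmetry groups do not match either, since $Z_{\breve G}(\g)$ is the $L$-points of a torus while $J_b$ is a locally profinite group of $F'$-points of an inner form. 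So there is no well-defined $\pi$, no identification of centralizers, and no fibration from which to read off the dimension formula.

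The paper instead proves the identity numerically, by running parallel reductions on the two families rather than comparing them geometrically. The Deligne--Lusztig reduction (Propositions~\ref{DLReduction1} and~\ref{DLReduction}) together with the cyclic-shift theorem for Iwahori--Weyl groups (Theorem~\ref{thm:min}) reduces the statement by induction on $\ell(w)$ to minimal length elements $w\approx ux$ with $x$ straight and $u\in W_K$. At the base case, the upper bound $\dim Y_{ux}(\g)\le\ell(u)+\dim Y_\g$ comes from projecting to $\breve G/\CP_K$ and applying the finite-group inequality $\dim\CB_{w,\g,\d}\le\dim X_w+\dim\CB_{\g,\d}$ (Proposition~\ref{le}) to the fibers, which are Lusztig varieties in the reductive quotient $\bar L_K$ twisted by $\Ad(\dot x)$; the lower bound uses the Hodge--Newton decomposition for ordinary conjugation (Theorem~\ref{thm:HN1}, proved via the $\bP_{\nu_w}$-alcove property of minimal length elements and successive approximation) to pass to the Levi $\bM_{\nu_x}$, together with equidimensionality of affine Springer fibers and nonemptiness of the regular locus (Theorem~\ref{thm: aff}), over which the fibers are Lusztig varieties of regular elements and have dimension exactly $\ell(u)$; finally $\dim X_{ux}(b)=\ell(u)$ by the known dimension theory of affine Deligne--Lusztig varieties. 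Any repair of your approach would need substitutes for all of these inputs, and in particular would have to abandon the global fibration picture.
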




The notion of associated groups is given in \S\ref{sec:ass}. It means that $\bG'$ is residually split and there is a natural bijection between the Iwahori--Weyl groups of $\bG$ and $\bG'$. If the residue characteristic of $\breve F$ is positive, then we may take $\bG'$ over $\breve F$ so that $\breve G=\breve G'$. However, if $\bG$ is defined over $\BC(\!(\e)\!)$, then  $\bG'$ has to be chosen over a different field. The isomorphism class of the affine Lusztig variety $Y_w(\g)$ depends on $w$ and the conjugacy class $\{\g\}$ of $\g$ in $\breve G$, and the isomorphism class of the affine Deligne--Lusztig variety $X_w(b)$ depends on $w$ and the Frobenius-twisted conjugacy class $[b]$ of $b$ in $\breve G'$. To match the data, we introduce in \S\ref{sec:ass} a surjective (but not injective) map $f_{\bG, \bG'}$ from the set of conjugacy classes of $\breve G$ to the set of Frobenius-twisted conjugacy classes of $\breve G'$. The variety $Y_\g$ is the affine Springer fiber associated with $\g$, introduced in \S\ref{sec:affineS} via the Hodge--Newton decomposition for ordinary conjugation action established in Theorem~\ref{thm:HN1}. In the case where $\g$ is bounded, $Y_\g$ is the affine Springer fiber of $\bG$, and in general, $Y_\g$ is the affine Springer fiber of some Levi subgroup of $\bG$. 

Finally, in Theorems~\ref{thm:gasf-gr} and~\ref{thm:gasf-fl}, we provide explicit descriptions of the nonemptiness pattern and the dimension formula of  affine Lusztig varieties, in most cases based on our knowledge of affine Deligne--Lusztig varieties and affine Springer fibers. 

One may also apply the method presented in this paper to study the irreducible components. This will be done in future work. 

It is also worth pointing out that the relation between affine Lusztig varieties and affine Deligne--Lusztig varieties is {\it not} constructed in a geometric way, that is, by explicitly defining a map from one type of variety to another and then computing images and fibers. One may expect there to be some close relation between the (co)homology of affine Lusztig varieties and that of  affine Deligne--Lusztig varieties, similar to the work of Lusztig~\cite{Lu79} (see, e.g., the equality displayed in~\cite[page 13]{Lu17}). Another possible way to reveal the relation between affine Lusztig varieties and affine Deligne--Lusztig varieties is via the (conjectural) connection between the cocenter of the affine Hecke category and the (conjectural) theory of affine character sheaves. The cocenter of the affine Hecke category has recently been studied by Li, Nadler, and Yun ~\cite{LNY}. 

\smallskip

{\bf Acknowledgement: } XH is partially supported by the New Cornerstone Science Foundation through the New Cornerstone Investigator Program and the Xplorer Prize, and by Hong Kong RGC grant 14300122. We thank Jingren Chi, George Lusztig, Michael Rapoport and Zhiwei Yun for helpful discussions and suggestions, and Felix Schremmer for careful reading of a preliminary version of this paper. We thank the editors and referees for many helpful suggestions.

\section{Preliminary}
\subsection{Loop groups and  Iwahori--Weyl groups} Let $\kk$ be an algebraically closed field and $L=\kk(\!(\e)\!)$ be the field of the Laurent series or $W(\kk)[1/p]$ if $\kk$ is characteristic $p$, where $W(\kk)$ is the ring of $p$-typical Witt vectors. Let $\bG$ be a connected reductive group over $L$. Let $\breve G=\bG(L)$ and let $\breve G^{\rs}$ be the set of regular semisimple elements in $\breve G$. 

Let $\bS$ be a maximal split torus of $\bG$ and $\bT$ be the centralizer of $\bS$ in $\bG$. Then $\bT$ is a maximal torus of $\bG$. The relative Weyl group of $\breve G$ is defined to be $W_0=\bN(\bT)(L)/\bT(L)$, where $\bN(\bT)$ is the normalizer of $\bT$. The Iwahori--Weyl group of $\breve G$ is defined as $\tW=\bN(\bT)(L)/\bT(L)_0$, where $\bT(L)_0$ is the (unique) parahoric subgroup of $\bT(L)$. Let $\CA$ be the apartment of $\bG$ corresponding to $\bS$. We fix an alcove $\mathfrak a$ in $\CA$ and let $\CI$ be the corresponding Iwahori subgroup of $\breve G$. The simple reflections are reflections along the wall of the base alcove $\mathfrak a$. Let $\tilde \BS$ be the set of simple reflections of $\tW$. We choose a special vertex $v_0$ of $\mathfrak a$ and represent $\tW$ as a semidirect product $$\tW=X_*(\bT)_{\G_0} \rtimes W_0=\{t^\l w; \l \in X_*(\bT)_{\G_0}, w \in W_0\},$$ where $X_*(\bT)_{\G_0}$ is the set of coinvariants of the coweight lattice $X_*(\bT)$ under the action of $\G_0=\text{Gal}(\bar L/L)$. Let $\BS=\tilde \BS \cap W_0$ be the set of simple reflections of $W_0$. For any $w \in \tW$, we choose a representative $\dot w$ in $\bN(\bT)(L)$. By convention, the dominant Weyl chamber of $V=X_*(T)_{\G_0} \otimes \BR$ is opposite to the unique Weyl chamber containing $\mathfrak a$. Let $\D$ be the set of relative simple roots determined by the dominant Weyl chamber. We denote by $w_0$ the longest element of $W_0$. 

A subset $K$ of $\tilde \BS$ is called {\it spherical} if the subgroup $W_K$ of $\tW$ generated by $s \in K$ is finite. In this case, we denote by $\CP_K$ the standard parahoric subgroup of $\breve G$ generated by $\CI$ and $\dot w$ for $w \in W_K$. By definition, a parahoric subgroup of $\breve G$ is a finite union of double cosets of an Iwahori subgroup. Any parahoric subgroup of $\breve G$ is conjugate to a standard parahoric subgroup $\CP_K$ for some spherical subset $K$ of $\tilde \BS$. 

For any $J \subset \tilde \BS$, we denote by ${}^J \tW$ (resp. $\tW^J$) the set of minimal representatives of $W_J \backslash \tW$ (resp. $\tW/W_J$). For any $J, K \subset \tilde \BS$, we simply write ${}^J \tW^K$ for ${}^J \tW \cap \tW^K$. 

Let $\breve G_{\af}$ be the subgroup of $\breve G$ generated by all the parahoric subgroups and $W_{\af}$ be the Iwahori--Weyl group of $\breve G_{\af}$. Then $W_{\af}$ is the subgroup of $\tW$ generated by $\tilde S$. Let $\Omega$ be the set of length-zero elements in $\tW$. Then we have $\tW=W_{\af} \rtimes \Omega$. 

\subsection{Affine Lusztig varieties} The affine flag variety of $\breve G$ is defined as $\Fl=\breve G/\CI$. If $\CK^{\mathrm{sp}} \supset \CI$ is the maximal special parahoric subgroup of $\breve G$ corresponding to the special vertex $v_0$, then we also consider the affine Grassmannian $\Gr=\breve G/\CK^{\mathrm{sp}}$. When $L$ is the field of Laurent series, both the affine flag variety and the affine Grassmannian have natural ind-scheme structures. In a mixed characteristic setting, we regard the affine flag variety and the affine Grassmannian as the perfect ind-schemes in the sense of Zhu~\cite{Zhu} and Bhatt–Scholze~\cite{BS}. 

Let $X_*(T)_{\G_0}^+$ be the set of dominant coweights. We have the decompositions $$\breve G=\bigsqcup_{w \in \tW} \CI \dot w \CI, \qquad \breve G=\bigsqcup_{\mu \in X_*(T)_{\G_0}^+} \CK^{\mathrm{sp}} t^\mu \CK^{\mathrm{sp}}.$$

For any $\g \in \breve G$ and $w \in \tW$, the {\it affine Lusztig variety} $Y_w(\g)$ in the affine flag variety $\Fl$ is defined by $$Y_w(\g)=\{g \CI \in \Fl; g \i \g g \in \CI \dot w \CI\}.$$ For $w \in \tW$ with $\ell(w)=0$ and $\g \in \CI \dot w \CI$, we have $\CI \dot w \CI=\CI \g$, and $Y_w(\g)=\Fl^{\g}$ is the affine Springer fiber. 

For any dominant coweight $\mu$ and $\g \in \breve G$, the {\it affine Lusztig variety} $Y_\mu(\g)$ in the affine Grassmannian $\Gr$ is defined by $$Y_\mu(\g)=\{g \CK^{\mathrm{sp}} \in \Gr; g \i \g g \in \CK^{\mathrm{sp}} t^\mu \CK^{\mathrm{sp}}\}.$$ 

By definition, $Y_w(\g)$ is a locally closed ind-subscheme of $\Fl$, and $Y_\mu(\g)$ is a locally closed ind-subscheme of $\Gr$. Let $Z_{\breve G}(\g)=\{g \in \breve G; g \i \g g=\g\}$ be the centralizer of $\g$. It acts on the affine Lusztig varieties $Y_w(\g)$ and $Y_\mu(\g)$.  

\subsection{Affine Deligne--Lusztig varieties}

Let $F'=\mathbb F_q(\!(\e)\!)$ and $\breve F'=\overline{\mathbb F}_q(\!(\e)\!)$. Let $\bG'$ be a connected reductive group, residually split over $F'$ and $\breve G'=\bG'(\breve F')$. Let $\s$ be the Frobenius endomorphism on $\breve G'$. We fix a $\s$-stable Iwahori--Weyl group $\CI'$ of $\breve G'$. Let $\tW'$ be the Iwahori--Weyl group of $\breve G'$ and $W'_0$ be the relative Weyl group. 

For $b \in \breve G'$ and $w \in \tW'$, the affine Deligne--Lusztig variety $X_w(b)$ in the affine flag variety is defined by $$X_w(b)=\{g \CI\in \Fl'; g \i b \s(g) \in \CI' \dot w \CI'\}.$$

Let $\CK' \supset \CI'$ be a $\s$-stable maximal special parahoric subgroup of $\breve G'$; then we may also consider the affine Deligne--Lusztig variety in the affine Grassmannian $\Gr'=\breve G'/\CK'$ defined by $$X_\mu(b)=\{g \CK' \in \Gr'; g \i b \s(g) \in \CK' t^\mu \CK'\}.$$

It is known that affine Deligne--Lusztig varieties are subschemes of locally finite type in the affine flag variety (resp. affine Grassmannian). Let $\bJ_b$ be the $\s$-centralizer of $b$ and $J_b=\{g \in \breve G'; g \i b \s(g)=b\}$ be the group of $F'$-rational points of $\bJ_b$. Then $J_b$ acts naturally on the affine Deligne--Lusztig varieties associated with $b$. 

\subsection{The sets $B_\s(\bG')$ and $B(\bG)$}
For any $\g \in \breve G$, let $\{\g\}=\{g \g g \i; g \in \breve G\}$ be the conjugacy class of $\g$. Let $B(\bG)$ be the set of conjugacy classes of $\breve G$. For any $b \in \breve G'$, let $[b]=\{g b \s(g) \i; g \in \breve G'\}$ be the $\s$-conjugacy class of $b$. Let $B_\s(\bG')$ be the set of $\s$-conjugacy classes of $\breve G'$.  
 
Note that the isomorphism classes of affine Deligne--Lusztig varieties depend on the $\s$-conjugacy class $[b]$, not the representative $b$ in $[b]$. Similarly, the isomorphism classes of affine Lusztig varieties depend on the conjugacy class $\{\g\}$, not the representative $\g$ of $\{\g\}$. We now recall Kottwitz's classification of $B_\s(\bG')$ and its analog on $B(\bG)$. 

Fix a maximal $F'$-split torus $\bA'$ of $\bG'$. Let $\bS'$ be a maximal $\breve F'$-split torus that is defined over $F'$ and contains $\bA'$. Let $\G'_0$ be the absolute Galois group of $F'$. Let $\bT'$ be the centralizer of $\bS'$. Let $X_*(\bT')_\BQ^+$ be the set of dominant rational coweights on $\bT'$. 
The $\s$-twisted conjugacy classes of $\breve G'$ have been classified by Kottwitz~\cite{kottwitz-isoI,kottwitz-isoII}. We only state the result for residually split groups here. 

\begin{theorem}
There is an embedding $$f_\s: B_\s(\bG') \to \pi_1(\bG')_{\G_0'} \times X_*(\bT')_\BQ^+, \quad [b] \mapsto (\k([b]), \nu_{[b]}).$$ 
\end{theorem}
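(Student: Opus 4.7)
The plan is to construct the two coordinates $\k$ and $\nu$ of $f_\s$ independently, check each descends to a well-defined invariant of $[b] \in B_\s(\bG')$, and then prove the combined map is injective. Well-definedness of each coordinate is relatively mechanical; the real content lies in injectivity.

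For the Newton coordinate, I would associate to each $b \in \breve G'$ a slope homomorphism $\nu_b \colon \BD \to \bG'$ from the pro-torus with character group $\BQ$, characterized by the condition that for every algebraic representation $(V,\rho)$ of $\bG'$ the isocrystal $(V_{\breve F'}, \rho(b)\s)$ has slope decomposition matching the weights of $\rho \circ \nu_b$. Concretely, for $N$ sufficiently divisible one arranges that $(b\s)^N$ acts by a genuine cocharacter $\mu \in X_*(\bT')$ (after passing to a suitable representative of $[b]$), and sets $\nu_b = \mu/N$. Conjugating into the dominant chamber yields $\nu_{[b]} \in X_*(\bT')_\BQ^+$, which visibly depends only on the $\s$-conjugacy class.

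For the Kottwitz coordinate, the Borovoi abelianization $\bG' \to \bG'^{\ab}$ (combined with the natural identification of fundamental groups of $\bG'$ and its quasi-split inner form $\bG_0$) induces a homomorphism $\breve G' \to \pi_1(\bG_0)$ on $\breve F'$-points. Since this homomorphism sends products to sums, it descends modulo the subgroup generated by $\{x - \s(x)\}$ to give $\k\colon B_\s(\bG') \to \pi_1(\bG_0)_{\G_0'}$.

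Injectivity is the main obstacle, and I would attack it by a reduction to the basic case. Call $[b]$ \emph{basic} if $\nu_{[b]}$ is central in $\bG'$; otherwise let $\bM$ be the centralizer of $\nu_{[b]}$, a proper $\s$-stable Levi subgroup of $\bG'$. The first key step is a Hodge--Newton-style decomposition producing a representative $b' \in \breve M$ that is basic in $\breve M$ and unique up to $\s$-conjugacy in $\breve M$; moreover the Levi embedding identifies the Newton and Kottwitz invariants compatibly, so injectivity reduces to the basic case inside each Levi. For basic classes, one proves that $\k$ alone is a bijection from the basic part of $B_\s(\bG')$ onto $\pi_1(\bG_0)_{\G_0'}$: surjectivity follows from a Lang--Steinberg argument over $\breve F'$ realizing each element of $\pi_1(\bG_0)_{\G_0'}$ by a torus element, and injectivity comes from the parametrization of inner twists of $\bG'$ by $H^1(\langle\s\rangle, \bG'^{\ad})$. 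The hardest step will be producing the Levi representative: one must construct a $\s$-stable splitting of the slope filtration cut out by $\nu_{[b]}$ (which is where residual splitness of $\bG'$ is essential), and then check that the Kottwitz invariant is compatible with the induced Levi inclusion so that no information is lost.
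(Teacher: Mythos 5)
You should first be aware that the paper contains no proof of this statement at all: it is quoted as Kottwitz's classification of $\s$-conjugacy classes, with the references \cite{kottwitz-isoI,kottwitz-isoII}, stated only in the residually split case (and the ``$\pi_1(\bG_0)$'' there is evidently a slip for $\pi_1(\bG')$, as the later diagrams show). So there is no internal argument to compare with; your sketch has to be measured against Kottwitz's original proof, whose broad outline (slope homomorphism $\nu_b$, Kottwitz homomorphism $\k$, reduction of injectivity to basic classes inside the Levi $\bM=Z_{\bG'}(\nu_{[b]})$) you do reproduce correctly.

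Measured against that, two steps in your plan are genuinely gapped. First, the construction of $\k$: the abelianization $\bG'\to\bG'/\bG'_{\mathrm{der}}$ computes $\pi_1(\bG')$ only when $\bG'_{\mathrm{der}}$ is simply connected; in general Kottwitz builds $\k$ from the torus case (where it is essentially the valuation map $T(\breve F')\to X_*(\bT')_{\G'_0}$) by functoriality and $z$-extensions, and that reduction is where the actual content of the definition lies. Second, and more seriously, the injectivity reduction defers exactly the hard point: after placing two classes with equal Newton point as basic classes of the same Levi $\bM$, you must deduce $\k_{\bM}([b])=\k_{\bM}([b'])$ from $\k_{\bG'}([b])=\k_{\bG'}([b'])$ together with equality of Newton points, and since $\pi_1(\bM)_{\G'_0}\to\pi_1(\bG')_{\G'_0}$ is far from injective this needs the key lemma that on basic classes of $\bM$ the pair (image in $\pi_1(\bG')_{\G'_0}$, Newton point, the latter read off from the image of $\k_\bM$ in $\pi_1(\bM)_{\G'_0}\otimes\BQ$) determines $\k_\bM$; writing ``check that no information is lost'' names this problem rather than solving it. Likewise, in the basic case injectivity of $\k$ is not a consequence of ``the parametrization of inner twists by $H^1(\langle\s\rangle,\bG'^{\mathrm{ad}})$''; Kottwitz derives the basic classification from the torus case together with Steinberg's vanishing of $H^1$ for the simply connected cover, again via $z$-extensions. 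As it stands your proposal is a correct roadmap of the known proof, but the two steps above are the substance of the theorem and are missing.
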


Chai ~\cite{Chai} gave an explicit description of the image of $f_\s$.

Similarly, we have the following.

\begin{theorem}
There is a natural map $$f: B(\bG) \to \pi_1(\bG)_{\G_0} \times X_*(\bT)_\BQ^+, \quad \{\g\} \mapsto (\k(\{\g\}), \nu_{\{\g\}}).$$
\end{theorem}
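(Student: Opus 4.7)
The plan is to construct the two components of $f$ separately on $\breve G$ and to verify that each descends to $B(\bG)$. The Kottwitz component is essentially free: the Kottwitz homomorphism $\k_{\bG}: \breve G \to \pi_1(\bG)_{\G_0}$ is a group homomorphism into an abelian group, hence automatically conjugation-invariant, and we may simply define $\k(\{\g\}) := \k_{\bG}(\g)$.

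The Newton component requires more care. I would first reduce to the semisimple case via the Jordan decomposition $\g = \g_s \g_u$ in $\breve G$, setting $\nu_{\{\g\}} := \nu_{\{\g_s\}}$; this is legitimate because $\g_s$ is uniquely determined by $\g$, so the conjugacy class $\{\g_s\}$ depends only on $\{\g\}$. For semisimple $\g$, I would choose a maximal torus $\bT_\g$ of $\bG$ over $\bar L$ containing $\g$. The rule $\chi \mapsto \mathrm{val}(\chi(\g))$ defines a homomorphism $X^*(\bT_\g) \to \BZ$, Galois-equivariant in the appropriate sense, and therefore yields an element $\nu_\g \in X_*(\bT_\g)_\BQ$. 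Picking any $h \in \bG(\bar L)$ with $h \bT_\g h^{-1} = \bT$ transports $\nu_\g$ into $X_*(\bT)_\BQ$, well-defined modulo the action of the Weyl group of $\bT$, and I would take the dominant representative as $\nu_{\{\g\}} \in X_*(\bT)_\BQ^+$.

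Well-definedness on $B(\bG)$ then reduces to two checks. First, for any $h \in \breve G$, conjugation by $h$ sends $\bT_\g$ to $h \bT_\g h^{-1}$ and intertwines the valuation pairing on $\bT_\g$ with the analogous one for $h \g h^{-1}$, so the transported cocharacter does not depend on the representative of $\{\g\}$. Second, any two maximal $\bar L$-tori of $\bG$ containing $\g_s$ are conjugate by an element of $Z_{\bG}(\g_s)(\bar L)$, and such a conjugation carries the valuation cocharacter of one torus to that of the other, so the construction does not depend on the choice of $\bT_\g$.

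The main obstacle I anticipate is the Galois bookkeeping: the transporter $h$ need not be $\G_0$-invariant, so one must verify that the resulting element of $X_*(\bT)_\BQ$ nonetheless lies in the $\G_0$-invariant subspace, which is what the paper's notation $X_*(\bT)_\BQ^+$ implicitly requires through the identification of $X_*(\bT)^{\G_0}_\BQ$ with $X_*(\bT)_{\G_0,\BQ}$. This reduces to a standard descent argument using that $\g \in \bG(L)$ is Galois-fixed and that the valuation on $\bar L$ is Galois-invariant: the different Galois conjugates of $h$ differ by an element normalizing $\bT$, so their effects on $\nu_\g$ differ only by Weyl-group action and produce the same dominant representative.
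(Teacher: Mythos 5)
Your construction is the standard ``slope'' construction and is sound in outline; note that the paper itself gives no argument for this statement, deferring to Kottwitz--Viehmann for split groups and to \cite[\S 3]{HN20} in general, and your valuation-cocharacter definition is exactly the KV-style construction, so you are essentially reconstructing the cited proof rather than taking a genuinely different route. What the citations supply beyond bare existence --- compatibility of $f$ with the decomposition $\breve G=\sqcup_C \{C\}$ into straight classes, which is what the rest of the paper actually uses --- is not touched by your argument, but it is not required by the statement as written.

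Two steps need repair as stated. First, in equal characteristic with $\operatorname{char}\kk=p>0$ the field $L=\kk((\e))$ is not perfect, so the Jordan decomposition of $\g\in\breve G$ is in general only defined over $\bar L$: in $\GL_2$ over $\bar\BF_2((\e))$ the element $\begin{pmatrix}0&1\\ \e&0\end{pmatrix}$ has semisimple part $\e^{1/2}\id\notin\breve G$, so ``$\g=\g_s\g_u$ in $\breve G$'' is false and ``$\{\g_s\}$'' need not be a $\breve G$-conjugacy class. The fix is harmless: define $\nu$ directly from $\g_s\in\bG(\bar L)$, which is fixed by $\operatorname{Aut}(\bar L/L)$ since Jordan decomposition commutes with field automorphisms, and run your conjugation-invariance check for the $\breve G$-action on $\g$ itself; also the valuation homomorphism takes values in $\BQ$, not $\BZ$ (ramified tori give fractional slopes). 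Second, in the Galois step, $\tau(h)h^{-1}$ normalizes $\bT$ only if $\bT_\g$ is $\tau$-stable, which your choice of $\bT_\g$ over $\bar L$ does not guarantee; you must first move $\tau(\bT_\g)$ back to $\bT_\g$ by an element of $Z_\bG(\g_s)^{\circ}(\bar L)$ (your second check, which also shows this does not change the transported cocharacter), or choose $\bT_\g$ over $L$ via Grothendieck's theorem applied to the smooth connected centralizer when $\g_s\in\breve G$. Finally, to conclude that the dominant representative is itself $\G_0$-fixed (and hence defines a point of $X_*(\bT)_\BQ^+$ in the sense used in the paper, i.e.\ of $(X_*(\bT)_\BQ)^{\G_0}$ with its relative dominance), you need the $\G_0$-action to preserve the dominant chamber; this holds because $\bG$ is quasi-split over $L$ by Steinberg's theorem, and should be said explicitly.
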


This is due to Kottwitz and Viehmann~\cite{KV} for split groups. The general case was studied in~\cite[\S 3]{HN20}. Note that the map $f$, unlike $f_\s$, is not injective in general. We will not recall the precise definition of the maps $f$ and $f_\s$, but we will give explicit descriptions of the restriction of these maps to the Iwahori--Weyl groups in the next subsection. 

\subsection{The straight conjugacy classes}\label{two-diagram} To match the conjugacy classes of $\breve G$ with the $\s$-conjugacy classes of $\breve G'$, it is necessary to compare the images of $f$ with $f_\s$. In this subsection, we recall the definition of the straight conjugacy classes in the Iwahori--Weyl groups, and compare the set of straight conjugacy classes with the images of $f$ and $f_\s$. 

By definition, an element $w \in \tW$ is called {\it straight} if $\ell(w^n)=n \ell(w)$ for all $n \in \BN$. A conjugacy class of $\tW$ is called {\it straight} if it contains a straight element. We denote by $\tW \sslash \tW$ the set of straight conjugacy classes of $\tW$. 

Let $w \in \tW$. Then there exists a positive integer $n$ such that $w^n=t^\l$ for some $\l \in X_*(\bT)_{\G_0}$. We set $\nu_w=\frac{\l}{n} \in X_*(\bT)_{\BQ}$. It is easy to see that $\nu_w$ is independent of the choice of $n$. Let $\bar \nu_w$ be the unique dominant coweight in the $W_0$-orbit of $\nu_w$. We also have a natural identification $\tW/W_{\af} \cong \Omega \cong \pi_1(\bG)_{\G_0}$. We consider the maps $\tW \to \pi_1(\bG)_{\G_0} \times X_*(\bT)_\BQ^+$ given by $w \mapsto (w W_{\af}, \bar \nu_w)$. By~\cite[Theorem 3.3]{HN14}, this map induces an injection $\tW \sslash \tW \to \pi_1(\bG)_{\G_0} \times X_*(\bT)_\BQ^+$. Similarly, we have an injection $\tW' \sslash \tW' \to \pi_1(\bG')_{\G'_0} \times X_*(\bT')_\BQ^+$.

It is worth pointing out that the induced action of $\s$ on $\tW'$ is trivial, and thus $\tW' \sslash \tW'$ considered here is the same as the set of straight $\s$-conjugacy classes $\tW' \sslash \tW'$ in \cite{He14}. By~\cite[\S 3]{He14}, the map $w \mapsto \dot w$ induces a natural bijection between $B_\s(\bG')$ and $\tW' \sslash \tW'$ such that the following diagram is commutative:

\[
\xymatrix{
B_\s(\bG') \ar[rr] \ar@{^{(}->}[dr]_{f_\s} & & \tW' \sslash \tW' \ar@{^{(}->}[dl] \ar[ll] \\
& \pi_1(\bG')_{\G'_0} \times X_*(\bT')_\BQ^+
}
\]
The bijection is given as follows. Let $C'$ be a straight conjugacy class of $\tW'$. Then, by~\cite{He14}, $\breve G' \cdot_\s \CI' \dot w' \CI'$ is independent of the choice of the minimal length elements $w'$ in $C'$ and is a single $\s$-conjugacy class of $\breve G'$. The map $B_\s(\bG') \to \tW' \sslash \tW'$ is defined by $[b] \mapsto C_{[b]}$, where $C_{[b]}$ is the unique element in $\tW' \sslash \tW'$ with $[b]= \breve G' \cdot_\s \CI' \dot w' \CI'$ for any minimal length element $w'$ of $C_{[b]}$.

By~\cite[Section 3]{HN20}, we have a similar commutative diagram
\[
\xymatrix{
B(\bG) \ar@{->>}[rr] \ar[dr]_{f} & & \tW \sslash \tW \ar@{^{(}->}[dl] \\
& \pi_1(\bG)_{\G_0} \times X_*(\bT)_\BQ^+
}
\]
Here the map $B(\bG) \to \tW \sslash \tW$ is defined as follows. By~\cite[\S 3.2]{HN20}, given a straight conjugacy class $C$ of $\tW$, the subset $\breve G \cdot \CI \dot w \CI$ is independent of the choice of the minimal length elements $w$ in $C$. We denote this subset by $\{C\}$. Then, by~\cite[Theorem 3.2]{HN20}, $\breve G=\bigsqcup_{C \in \tW \sslash \tW} \{C\}$. The map $B(\bG) \to \tW \sslash \tW$ is defined by $\{\g\} \mapsto C_{\{\g\}}$, where $C_{\{\g\}}$ is the unique element in $\tW \sslash \tW$ with $\{\g\} \subset \{C_{\{\g\}}\}$.

The set $\{C\}$ is stable under the conjugation action of $\breve G$, but it may contain infinitely many conjugacy classes. The map $B(\bG) \to \tW \sslash \tW$ is surjective, but not injective. This is different from the bijective map $B_\s(\bG') \to \tW' \sslash \tW'$. 

\subsection{The map from $B(\bG)$ to $B_\s(\bG')$}\label{sec:ass}

We say that the group $\bG'$ over $F'$ is {\it associated with} the group $\bG$ over $L$ if $\bG'$ is residually split over $F'$ and we have a length-preserving isomorphism $\tW \cong \tW'$ that is compatible with the semidirect products, i.e.,
\[
\xymatrix{X_*(T)_{\G_0} \rtimes W_0 \ar[d]^{\cong} \ar@{=}[r] & \tW \ar[d]^{\cong} \\ X_*(T')_{\G'_0} \rtimes W'_0  \ar@{=}[r] & \tW'
}
\]
In particular, we require that this isomorphism induces isomorphisms on the coweight lattices and on the relative Weyl groups and a bijection on the set of simple reflections. By the classification of reductive groups, for any $\bG$, there exists a connected reductive group $\bG'$ over $F'$ that is associated with it. 

Suppose that $\bG'$ is associated with $\bG$. Then we may identify $\tW'$ with $\tW$. Under this identification, we have $\tW \sslash \tW=\tW' \sslash \tW'$. Thus there exists a map  $f_{\bG, \bG'}: B(\bG) \to B_\s(\bG')$ such that the following diagram commutes:
\[
\xymatrix{
B(\bG) \ar[rrr]^{f_{\bG, \bG'}} \ar[rdd] \ar[rd] & & & B_\s(\bG') \ar@{->>}[ldd] \ar[ld] \\
& \tW \sslash \tW \ar@{=}[r] \ar@{^{(}->}[d] & \tW' \sslash \tW' \ar[ru] \ar@{^{(}->}[d] & \\
& \pi_1(\bG)_{\G_0} \times X_*(\bT)_\BQ^+ & \pi_1(\bG')_{\G'_0} \times X_*(\bT')_\BQ^+ &
}
\]
Note that $B_\s(\bG')$ is in natural bijection with $\tW \sslash \tW$. Thus the map $f_{\bG, \bG'}: B(\bG) \to B_\s(\bG')$ is surjective. However, this map is not injective in general. 

\section{Affine Springer fibers}

\subsection{Bounded-modulo-center elements}
In the rest of this paper, unless otherwise stated, we assume that either $\mathrm{char}(\kk)=0$ or $\mathrm{char}(\kk)>0$ does not divide the Weyl group $W_0$ of $\bG$. 

In this section, we consider the group $\breve G$ together with a group automorphism $\d$ such that $\d(\CI)=\CI$. Then $\d$ also induces group automorphisms on $\tW$ and on $\Omega$. We consider the group extensions $$\breve G^{\ex}=\breve G \rtimes \<\d\>, \tW^{\ex}=\tW \rtimes \<\d\> \text{ and } \Omega^{\ex}=\Omega \rtimes \<\d\>.$$ 

An element of $\breve G^{\ex}$ is called {\it bounded} if it is contained in a bounded subgroup. Note that if $\t \in \Omega^{\ex}$ is of finite order, then any element in $\CI \dot \t$ is bounded. On the other hand, if $g \in \breve G^{\ex}$ is bounded, then it has a fixed point in the affine flag variety. So it is conjugate to an element of the form $\CI \dot \t$ for some $\t \in \Omega^{\ex}$. By the boundedness, $\t$ is of finite order. Thus an element of $\breve G^{\ex}$ is bounded if and only if it is conjugate to an element of $\CI \dot \t$ for some $\t \in \Omega^{\ex}$ with finite order. 

Let $\breve G_{\ad}$ be the adjoint group of $\breve G$. We have a natural projection map $\breve G^{\ex} \to \Aut(\breve G_{\ad})$. An element of $\breve G^{\ex}$ is called {\it bounded-modulo-center} if its image in $\Aut(\breve G_{\ad})$ is bounded. We assume furthermore that the image of $\d$ in $\Aut(\breve G_{\ad})$ is of finite order. Then an element of $\breve G^{\ex}$ is bounded-modulo-center if and only if it is conjugate to an element of $\CI \dot \t$ for some $\t \in \Omega^{\ex}$. As a consequence, an element of $\breve G^{\ex}$ is bounded-modulo-center if and only if it is conjugate to an element in $\CP_K \dot \t$ for some $\t \in \Omega^{\ex}$ and some $\Ad(\t)$-stable spherical subset $K$ of $\tilde \BS$. 

\subsection{Regular locus}\label{sec:reg}
Let $\g$ be a bounded-modulo center element of $\breve G^{\ex}$. Let $\t \in \Omega^{\ex}$ and $K$ be an $\Ad(\t)$-stable spherical subset of $\tilde \BS$. We define the {\it affine Springer fiber} in the partial affine flag variety $\breve G/\CP_K$ by $$Y_{K, \t}(\g)=\{g \CP_K \in \breve G/\CP_K; g \i \g g \in \CP_K \dot \t\}.$$ By definition, $Y_{K, \t}(\g) \neq \emptyset$ if and only if $\g$ is conjugate to an element in $\CP_K \dot \t$. If we assume furthermore that $\g \in \CP_K \dot \t$, then the condition that $g \i \g g \in \CP_K \dot \t$ is equivalent to the condition that $\g (g \CP_K) \g \i=g \CP_K$. In other words, $Y_{K, \t}(\g)=(\breve G/\CP_K)^\g$ is the subscheme of fixed points of $\g$ on $\breve G/\CP_K$. 

Let $\bar \CP_K$ be the reductive quotient of $\CP_K$, i.e., the quotient of $\CP_K$ by its pro-unipotent radical. The Dynkin diagram of $\bar \CP_K$ is $K$. The conjugation action of $\dot \t$ on $\CP_K$ induces a diagram automorphism on $\bar \CP_K$, which we denote by $\bar \t$. For any element $\g'$ in $\CP_K \dot \t$, we denote by $\bar \g'$ its image in the (not necessarily connected) reductive group $\bar \CP_K \rtimes \<\bar \t\>$. We consider the {\it regular locus} $$Y_{K, \t}^{\reg}(\g)=\{g \CP_K \in Y_{K, \t}(\g); \overline{g \i \g g} \text{ is regular in } \bar \CP_K \rtimes \<\bar \t\>\}.$$

Since the set of regular elements in any (not necessarily connected) reductive group over $\kk$ is open, $Y^{\reg}_{K, \t}(\g)$ is open in $Y_{K, \t}(\g)$. By~\cite[\S 3, Proposition 1]{KL}, the affine Springer fibers associated with the bounded-modulo-center regular semisimple elements are finite-dimensional. 

We now state the main result of this section. 

\begin{theorem}\label{thm: aff}
Let $\g \in \breve G^{\ex}$ be a bounded-modulo-center regular semisimple element. Then 
\begin{enumerate}
\item the affine Springer fiber $\Fl^\g$ in the affine flag variety is equidimensional;
\item  for any $\t \in \Omega^{\ex}$ and an $\Ad(\t)$-stable spherical subset $K$ of $\tilde \BS$ with $\g \in \CP_K \dot \t$, the regular locus $Y^{\reg}_{K, \t}(\g)$ is nonempty and $$\dim Y_{K, \t}(\g)=\dim Y^{\reg}_{K, \t}(\g)=\dim \Fl^\g.$$
\end{enumerate}
\end{theorem}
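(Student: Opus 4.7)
After conjugating $\g$, I may assume $\g \in \CI\dot\t$, so $\Fl^\g$ is nonempty and finite-dimensional by Kazhdan--Lusztig~\cite{KL}. The main tool is the projection
\[
\pi_K \colon \Fl^\g \to Y_{K,\t}(\g), \qquad g\CI \mapsto g\CP_K,
\]
coming from $\breve G/\CI \to \breve G/\CP_K$. After reducing modulo the pro-unipotent radical of $\CP_K$, the fiber of $\pi_K$ over a point $g\CP_K$ identifies with the fixed-point set $(\bar\CP_K/\bar\CB)^{\overline{g\i\g g}}$ in the finite-dimensional partial flag variety of the reductive group $\bar\CP_K \rtimes \<\bar\t\>$. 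The key dichotomy is: if $g\CP_K \in Y^{\reg}_{K,\t}(\g)$ then $\overline{g\i\g g}$ is regular, so this fiber is a finite $W_K$-torsor by classical Springer theory; otherwise it is a positive-dimensional Springer fiber controlled by Spaltenstein's dimension formula.

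For the nonemptiness of $Y^{\reg}_{K,\t}(\g)$, the regular locus is open and dense in the (possibly disconnected) reductive group $\bar\CP_K \rtimes \<\bar\t\>$, and the connected centralizer $T_\g := Z_{\breve G}(\g)^0$ is a torus by regular semisimplicity. A building-theoretic argument exploiting the $T_\g$-action on the alcoves contained in $\CP_K$ should produce a conjugate of $\g$ whose image in $\bar\CP_K\rtimes\<\bar\t\>$ is regular. Quasi-finiteness of $\pi_K$ over the regular locus then gives an open subset $\pi_K\i(Y^{\reg}_{K,\t}(\g)) \subseteq \Fl^\g$ of the same dimension as $Y^{\reg}_{K,\t}(\g)$, so $\dim Y^{\reg}_{K,\t}(\g) \le \dim \Fl^\g$.

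The core of the argument is the reverse inequality together with the equidimensionality of $\Fl^\g$. I would prove them by a joint induction, treating $K = \emptyset$ first, where $\bar\CI = \bar T$ is a torus and regularity is essentially automatic. For general $K$ I would stratify $Y_{K,\t}(\g)$ by the Levi type of the semisimple part of $\overline{g\i\g g}$; on each stratum the generic fiber of $\pi_K$ is an equidimensional Spaltenstein Springer fiber whose dimension exactly compensates the codimension of the stratum, so every stratified piece of $\pi_K\i(Y_{K,\t}(\g))$ has dimension $\dim Y^{\reg}_{K,\t}(\g)$. Pushing this dimension balance forward along $\pi_K$ forces $\dim Y_{K,\t}(\g) = \dim Y^{\reg}_{K,\t}(\g) = \dim \Fl^\g$ and, combined with Spaltenstein's equidimensionality theorem, the equidimensionality of $\Fl^\g$.

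The main obstacle will be the quantitative dimension balance for non-split or non-simply connected $\bG$, where $\bar\CP_K \rtimes \<\bar\t\>$ is genuinely disconnected and the Spaltenstein-type inputs must be handled in a twisted, possibly non-connected setting. A potential fallback is an induction on the semisimple rank of $\bG$, reducing via a Hodge--Newton-type decomposition (in the spirit of Theorem~\ref{thm:HN1}) to Levi subgroups, together with the action of a regular cocharacter $\BG_m \inj T_\g$ on $\Fl^\g$ to propagate equidimensionality through a Bialynicki-Birula style contraction.
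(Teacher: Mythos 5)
Your fibration $\pi_K\colon \Fl^\g\to Y_{K,\t}(\g)$ and the dichotomy (finite fibers over the regular locus, positive-dimensional Springer fibers elsewhere) are exactly the right frame, and the inequality $\dim Y^{\reg}_{K,\t}(\g)\le\dim\Fl^\g$ comes out as you say. But the two statements that carry the theorem are left without an actual argument. First, nonemptiness of $Y^{\reg}_{K,\t}(\g)$: ``a building-theoretic argument exploiting the $T_\g$-action \dots should produce a conjugate of $\g$ whose image is regular'' is a hope, not a proof, and density of the regular locus in $\bar\CP_K\rtimes\<\bar\t\>$ gives you nothing about the specific conjugacy orbit of $\g$ meeting it. This is precisely the delicate point; the paper (following Kazhdan--Lusztig, \S4) proves it homologically: the image $\hat V_\g$ of $H_{2d}(\Fl^\g)$ in $H_{2d}(\Fl)$ (with $d=\dim\Fl^\g$) is stable under the $Z_{\tW^{\ex}}(\t)$-action, its $Z_{W_K}(\t)$-invariants are nonzero, hence some irreducible component $Y$ satisfies $(\id+\rho(w_{K_1}))[Y]\neq 0$ for every $\Ad(\t)$-orbit $K_1\subset K$, so $Y$ is not a union of lines of type $K_1$, and a generic point of $Y$ maps to a point of $Y_{K,\t}(\g)$ whose fiber is $0$-dimensional, i.e.\ lands in the regular locus by Spaltenstein's criterion. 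Nothing in your proposal substitutes for this mechanism.

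Second, equidimensionality of $\Fl^\g$. Your joint induction has $K=\emptyset$ as base case, but for $K=\emptyset$ the variety $Y_{K,\t}(\g)$ \emph{is} $\Fl^\g$, so the base case is the full statement and the induction never starts; and the ``dimension balance'' on the strata (generic fiber dimension exactly compensating the codimension of the Levi-type stratum of $Y_{K,\t}(\g)$) is an unproved assertion that is essentially equivalent to what you want. The paper instead reduces, via the topological Jordan decomposition and the Bezrukavnikov--Varshavsky universal homeomorphism, to a topologically unipotent element of a simply connected group, and then runs the Kazhdan--Lusztig chain-of-lines argument: connectedness of $\Fl^\g$ using Lusztig's theorem that $Z_{\tW}(\t)$ is the affine Weyl group generated by the $w_K$, $K\in\tilde\BS_{\t\text{-sph}}$, plus the lemma that every line of type $K$ lies in a top-dimensional irreducible component, which forces every component to have dimension $d$. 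Note also that your fallback via a Bia{\l}ynicki-Birula contraction for a cocharacter $\BG_m\inj T_\g$ fails in the main case of interest, since for elliptic $\g$ the centralizer torus is anisotropic (modulo center) and admits no such cocharacter. Once equidimensionality and nonemptiness of the regular locus are in hand, your concluding dimension comparison is the same as the paper's.
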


\subsection{Topological Jordan decomposition}
We follow~\cite[Appendix B]{BV}. Recall that $\kk$ is the residue field of $\breve F$. Let $\g \in \breve G^{\ex}$ be a bounded semisimple element. We say that $\g$ is {\it strongly semisimple} if for every representation of $\bG^{\ex}$ (over an algebraic closure of $\breve F$), any eigenvalue of the image of $\g$ is in $\kk^\times$ (in equal characteristic case), or the Teichm\"uller representative of an element in $\kk^\times$ (in mixed characteristic case). The latter condition is equivalent to the condition that any eigenvalue is of finite order prime to $p$, where $p$ is the residue characteristic. We say that $\g$ is {\it topologically unipotent} if for every representation of $\bG^{\ex}$ (over an algebraic closure of $\breve F$), any eigenvalue $\a$ of $\g$ satisfies $\mathrm{val}_{\breve F}(\a-1)>0$. 

The following result, which is called the {\it topological Jordan decomposition}, was proved by Bezrukavnikov and Varshavsky~\cite[Lemma B.1.6]{BV} for split groups in equal characteristic case. The same proof works for all groups in both equal and mixed characteristic cases.

\begin{lemma}\label{lem:jordan}
    Let $\g \in \breve G^{\ex}$ be a bounded semisimple element. Then there exists a unique decomposition $\g=s u=u s$ such that $s \in \breve G^{\ex}$ is strongly semisimple and $u \in \breve G^{\ex}$ is topologically unipotent. 
\end{lemma}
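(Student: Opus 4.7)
The plan is to follow the classical topological Jordan decomposition argument over non-archimedean local fields: realize $s$ as a $p$-adic limit of powers of $\g$ inside the compact abelian topological subgroup $A=\overline{\<\g\>}$ of $\breve G^{\ex}$, set $u=s\i\g$, and then verify the eigenvalue conditions in an arbitrary representation. The discussion preceding the lemma shows that after conjugation we may assume $\g\in\CP_K\dot\t$ for some finite-order $\t\in\Omega^{\ex}$ and some $\Ad(\t)$-stable spherical subset $K\subset\tilde\BS$; in particular $\g$ lies in a compact subset of $\breve G^{\ex}$, so $A$ is a compact abelian topological group. As a topologically monogenic compact abelian group, $A$ is profinite and splits canonically as $A=A^{p'}\times A_p$, with $A_p$ the maximal pro-$p$ subgroup and $A^{p'}$ the maximal subgroup of order prime to $p$. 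One realizes this splitting concretely via $s:=\lim_{m\to\infty}\g^{N_m}$ for integers $N_m$ tending to $1$ in $\prod_{\ell\ne p}\BZ_\ell$ and to $0$ in $\BZ_p$; then $s\in A^{p'}$, $u:=s\i\g=\g s\i\in A_p$, and $s$ and $u$ commute.

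To verify the eigenvalue conditions, fix any representation $\rho\colon\bG^{\ex}\to\GL_V$ over an algebraic closure of $\breve F$. Since $\rho(s)$ has finite order prime to $p$, its eigenvalues are roots of unity of order prime to $p$; these are precisely the Teichm\"uller lifts of elements of $\kk^\times$, and in equal characteristic already lie in $\kk^\times$ itself. Hence $s$ is strongly semisimple. Any eigenvalue $\a$ of $\rho(u)$ satisfies $\a^{p^n}\to 1$ in $\bar{\breve F}$, which forces $\mathrm{val}_{\breve F}(\a-1)>0$. Hence $u$ is topologically unipotent.

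For uniqueness, suppose $\g=s'u'=u's'$ is another such decomposition. Since $s$ is a limit of powers of $\g$, any element commuting with $\g$ also commutes with $s$; in particular $s,s',u,u'$ all pairwise commute. Then $x:=s(s')\i=u'u\i$ is simultaneously strongly semisimple (a product of commuting strongly semisimple elements: simultaneous diagonalization shows its eigenvalues are products of prime-to-$p$ roots of unity) and topologically unipotent (the same diagonalization exhibits those eigenvalues as ratios of units congruent to $1$ modulo $\fkm$). The only common values are $1$, so $\rho(x)=\id$ in every faithful $\rho$, whence $x=1$ and $s=s'$, $u=u'$.

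The main technical obstacle is ensuring that the limit $s$ genuinely lies in $\breve G^{\ex}$ rather than in some ambient completion inside $\GL(V)$; this is why the construction is performed once and for all inside the compact subgroup $A\subset\breve G^{\ex}$, where $A$ is closed inside a compact set of $\breve G^{\ex}$. The reason the lemma still requires an argument beyond the split equal-characteristic case of \cite{BV} is that one must check the pro-$p$ versus prime-to-$p$ splitting of $A$ and the Teichm\"uller identification of $p'$-roots of unity go through unchanged in the ramified and mixed characteristic settings; but these depend only on the profinite topological-group structure of $A$, which is insensitive to such variations.
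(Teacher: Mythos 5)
There is a genuine gap: your construction presupposes that the ambient compact subgroup is profinite and that there is a positive residue characteristic $p$, and neither holds in the setting of this lemma. Here $\kk$ is an arbitrary algebraically closed field (possibly of characteristic $0$, e.g.\ $L=\BC((\e))$, a case the paper explicitly needs), so ``compact'' subgroups of $\breve G^{\ex}$ such as $\CI\rtimes\<\dot\t\>$ are pro-\emph{algebraic} groups over $\kk$, not profinite groups, and the closure $A=\overline{\<\g\>}$ is typically not compact as a topological group at all. Concretely, take the multiplicative group over $\BC((\e))$ and $\g=2$: this is a compact semisimple element whose topological Jordan decomposition is $s=2$, $u=1$, but $\{\g^n\}_{n\in\BZ}$ is discrete in the $\e$-adic topology, so no limit $\lim_m\g^{N_m}$ exists, and there is no $p$ with which to form the splitting $A=A^{p'}\times A_p$. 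Even in mixed characteristic the argument fails for general $\kk$: for $\g=[t]$ the Teichm\"uller lift of a transcendental $t\in\kk^\times$ in $W(\kk)^\times$, $\g$ is strongly semisimple of infinite order, $\<\g\>$ is discrete, and your identification ``strongly semisimple $=$ finite order prime to $p$'' is false --- the paper's definition only requires eigenvalues in $\kk^\times$ (resp.\ Teichm\"uller representatives), which need not be roots of unity. Your uniqueness argument also leans on $s$ being a limit of powers of $\g$, so it collapses together with the construction. The limit-of-powers argument you propose is the classical one over local fields with \emph{finite} residue field; it does not transfer to residue field $\kk=\bar\kk$.

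The paper's route (following Bezrukavnikov--Varshavsky, Lemma B.1.7) avoids all of this: after conjugating so that $\g\in\CI\dot\t$ with $\t\in\Omega^{\ex}$ of finite order, one writes $\CI=\lim_n H_n$ with $H_n=\CI/\CI_n$ an algebraic group over $\kk$, takes the (algebraic-group) Jordan decomposition $\g_n=s_nu_n$ of the image of $\g$ in $H_n\rtimes\<\dot\t\>$ for each $n$, uses the uniqueness of the Jordan decomposition to see that $\{s_n\}$ and $\{u_n\}$ are compatible families, and sets $s=\lim_n s_n$, $u=\lim_n u_n$; strong semisimplicity of $s$, topological unipotence of $u$, and uniqueness are then checked at the finite levels. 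If you want to salvage your write-up, you should replace the profinite/pro-$p$ splitting of $\overline{\<\g\>}$ by this inverse-limit-of-algebraic-groups argument, which is insensitive to the residue characteristic.
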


The topological Jordan decomposition can be seen explicitly as follows. Note that any bounded element in $\breve G^{\ex}$ is conjugate to an element in $\CI \dot \t$ for some $\t \in \Omega^{\ex}$ of finite order. Suppose that $\g \in \CI \dot \t$. Following the proof of~\cite[Lemma B.1.7]{BV}, we may represent $\CI$ as $\lim_n H_n$, where $H_n=\CI/\CI_{n}$ is an algebraic group over $\kk$. Here $\CI_{n}$ is the $n$th congruence subgroup of $\CI$. The conjugation action of $\Omega^{\ex}$ preserves $\CI_{n}$ and induces an action on $H_n$ for each $n$. We consider the group extension $H^{\ex}_n=H_n \rtimes \<\dot \t\>$. Then $\g \in \CI \dot \t$ corresponds to a family $\g_n \in H_n^{\ex}$. Let $\g_n=s_n u_n=u_n s_n$ be the Jordan decomposition of $\g_n \in H^{\ex}_n$. By the uniqueness of Jordan decomposition, both $\{s_n\}_n$ and $\{u_n\}_n$ form compatible families. Let $s=\lim_n s_n \in \CI \rtimes \<\dot \t\>$ and $u=\lim_n u_n \in \CI \rtimes \<\dot \t\>$. Then $\g=s u=u s$. By the proof of~\cite[Lemma B.1.7]{BV}, $s$ is strongly semisimple and $u$ is topologically unipotent. Then $s u$ is the topological Jordan decomposition of $\g$.

We also have the following simple criterion for topological unipotent elements. Let $U_{\CI}$ be the pro-unipotent radical of $\CI$ and $\bar \CI=\CI/U_{\CI}$ be the reductive quotient, which is a torus over $\kk$. Let $\g \in \CI \rtimes \Omega^{\ex}$ and $\bar \g$ be the image of $\g$ in $\bar \CI \rtimes \Omega^{\ex}$. It is easy to see that $\g$ is topologically unipotent if and only if $\bar \g$ is a unipotent element in the (disconnected) reductive group $\bar \CI \rtimes \Omega^{\ex}$. When the residue characteristic is large, the only unipotent elements in $\bar \CI \rtimes \Omega^{\ex}$ is the identity element. In this case, the topologically unipotent elements in $\CI \rtimes \Omega^{\ex}$ are the elements in $U_{\CI}$, and the topologically unipotent elements of $\breve G$ are all contained in $\breve G_{\af}$. However, this is not the case for small residue characteristic. 

\begin{example}
Let $\bG=\operatorname{PGL}_2$ and $\breve F=\bar \BF_2(\!(\e)\!)$. Then $\begin{bmatrix} \e & 1 \\ \e & \e+\e^2 \end{bmatrix}$ is a topologically unipotent regular semisimple element. However, this element is not contained in $\breve G_{\af}$. 
\end{example}

\subsection{Equi-dimensionality}
To prove Theorem~\ref{thm: aff}, one may reduce consideration to that of the adjoint group case. In particular, one may assume that $\g$ is a bounded element. 

Let $\g=s u$ be the topological Jordan decomposition. For any strongly semisimple element $s \in \breve G$, we denote by $\bG^{\mathrm{sc}}_s$ the simply connected covering of the derived group of the centralizer of $s$. By~\cite[Lemma B.2.3]{BV}, there is a universal homeomorphism between the affine Springer fiber $(\Fl^{\bG})^{\g}$ and a disjoint union of copies of the affine Springer fibers $(\Fl^{\bG^{\mathrm{sc}}_s})^{\g}$. 

It suffices to prove the following: 
\begin{enumerate}
\item[(a)] {\it Suppose that $\bG$ is a semisimple, simply connected group. Let $\g$ be a topological unipotent element in $\breve G^{\ex}$. Then the affine Springer fiber $\Fl^\g$ is connected and equi-dimensional.}
\end{enumerate}
When $\bG$ is split over $\BC(\!(\e)\!)$, the result is due to Kazhdan and Lusztig~\cite[\S 4, Proposition 1 and Lemma 2]{KL}. The general case follows from their proof, by paying extra attention to the Springer fibers for unipotent elements in disconnected reductive groups (only occurring in small characteristic) as in~\cite[Chap. II, \S 1]{Sp}. We give a sketch of the proof for the convenience of  readers. 

Suppose that $\g \in \breve G \dot \t$ for some $\t \in \Omega^{\ex}$. Let $\tilde \BS_{\t\text{-sph}}$ be the set of spherical $\Ad(\t)$-orbits on $\tilde \BS$. For any $K \in \tilde \BS_{\t\text{-sph}}$, we consider the projection map $\pi: \Fl \to \breve G/\CP_K$. By Steinberg's theorem, the map $\pi$ induces a surjection from $\Fl^\g$ to $Y_{K, \t}(\g)=(\breve G/\CP_K)^\g$, and the fiber over $g \CP_K$ is the Springer fiber in the finite flag $\CP_K/\CI$ associated with the element $\overline{g \i \g g}$, where $\overline{g \i \g g}$ is the image of $g \i \g g$ in the (disconnected) reductive group $\bar \CP_K \rtimes \Omega^{\ex}$. Since $\g$ is topologically unipotent, $\overline{g \i \g g}$ is unipotent. By~\cite[Chap. II, Lemme 1.3]{Sp}, any fiber of $\pi: \Fl^\g \to (\breve G/\CP_K)^\g$ is either a point or a one-dimensional curve. In the latter case, we call this one-dimensional curve a {\it line of type $K$}.

For connectedness, we follow the proof of~\cite[\S 4, Lemma 2]{KL}. Let $z, z' \in \Fl^\g$. Let $w$ be the relative position of $z$ and $z'$. Then $w$ is also the relative position of $\dot \t \cdot z$ and $\dot \t \cdot z'$. Thus $w$ commutes with $\t$. For any $K \in \tilde \BS_{\t\text{-sph}}$, we denote by $w_K$ the longest element in the finite Weyl group $W_K$. Since $\bG$ is semisimple and simply connected, the Iwahori--Weyl group $\tW$ is the affine Weyl group $W_{\af}$. By~\cite[Theorem A.8]{Lu-unequal}, the centralizer of $\t$ in $\tW$ is the affine Weyl group generated by $\{w_K\}_{K \in \tilde \BS_{\t\text{-sph}}}$. Therefore   we have a length-additive expression $w=w_{K_1} \cdots w_{K_n}$ for some $K_1, \ldots, K_n \in \tilde \BS_{\t\text{-sph}}$. Then there exists a unique sequence $z=z_0, z_1, \ldots, z_n=z'$ of elements in $\Fl^\g$ such that for any $0<i \le n$, $z_{i-1}$ and $z_{i}$ are in the same fiber of the projection map $\Fl^\g \to (\breve G/\CP_{K_{i}})^\g$. Hence  $z_{i-1}$ and $z_{i}$ are connected by a line of type $K_{i}$. In particular, $z$ and $z'$ are connected, and hence $\Fl^\g$ is connected. 

Finally, we prove  equi-dimensionality. Let $Y$ be an irreducible component of $\Fl^\g$. Let $Y_0$ be an irreducible component of $\Fl^\g$ of dimension $d$, where $d=\dim \Fl^\g$. Let $z \in Y_0$ and $z'$ be a generic point in $Y$. Let $z=z_0, z_1, \ldots, z_n=z'$ be a sequence of elements in $\Fl^\g$ such that for any $0<i \le n$, $z_{i-1}$ and $z_{i}$ are connected by a line $\CL_{i}$ of type $K_{i}$. By~\cite[\S 4, Lemma 1]{KL}, there exists an irreducible component $Y_i$ of $\Fl^\g$ such that $\dim Y_i=d$ and $\CL_i \subset Y_i$. In particular, $z' \in \CL_n \subset Y_n$. Since $z'$ is a generic point in $Y$, we must have $Y=Y_n$. In particular, $\dim Y=d$. 

\subsection{Nonemptiness of the regular locus}
When $\bG$ is split over $\BC(\!(\e)\!)$, and $K$ is hyperspecial, the result is due to Kazhdan and Lusztig~\cite[\S 4, Corollaries 1 and 2]{KL}. The general case follows from their proof, by paying extra attention to the Springer fibers for disconnected reductive groups. We give a sketch of the proof for the convenience of  readers. 

Let $d=\dim \Fl^\g$. Let $\rho: \tW^{\ex} \to \Aut(H_{2 d}(\Fl))$ be the natural action of $\tW^{\ex}$ on $H_{2 d}(\Fl)$. Suppose that $\g \in \breve G_{\af} \dot \t$ for some $\t \in \Omega^{\ex}$. Let $\hat V_\g$ be the image of $H_{2 d}(\Fl^\g)$ in $H_{2 d}(\Fl)$. Similar to~\cite[\S 4, Lemma 7]{KL}, $\hat V_\g$ is a $Z_{\tW^{\ex}}(\t)$-invariant subspace in $H_{2 d}(\Fl)$. Note that $Z_{W_K}(\t)$ is a finite subgroup of $Z_{\tW^{\ex}}(\t)$. Similar to~\cite[\S 4, Lemma 8]{KL}, $(\hat V_{\g})^{Z_{W_K}(\t)} \neq \{0\}$. For any irreducible component $Y$ of $\Fl^\g$, we denote by $[Y]$ the homology class represented by $Y$. Let $T=\sum_{w \in Z_{W_K}(\t)} \rho(w)$. Since $(\hat V_{\g})^{Z_{W_K}(\t)} \neq \{0\}$, $T[Y] \neq 0$ for some irreducible component $Y$ of $\Fl^\g$. In particular, $(\id+\rho(w_{K_1}))[Y] \neq 0$ for any $\Ad(\t)$-orbit $K_1$ in $K$. Similar to~\cite[\S 4, Lemma 9]{KL}, $Y$ is not a union of lines of type $K_1$. 

Let $y$ be a generic point in $Y$. Then $y$ is not in the line of type $K_1$ for any $\Ad(\t)$-orbit $K_1$ in $K$. Let $y'$ be the image of $y$ in $\breve G/\CP_K$. Then $y' \in (\breve G/\CP_K)^\g=Y_{K, \t}(\g)$. Suppose that $y'=g \CP_K$. The fiber of $y'$ for the projection map $\Fl^\g \to Y_{K, \t}(\g)$ is the Springer fiber of $\CP_K/\CI$ associated with the element $\overline{g \i \g g}$. This fiber does not contain any line of type $K_1$ through $y$. By~\cite[Chap. II, Corollaire 1.7]{Sp}, this fiber is $0$-dimensional. Hence $\overline{g \i \g g}$ is a regular element in $\overline{\CP_K} \rtimes \<\bar \t\>$. Thus $y' \in Y^{\reg}_{K, \t}(\g)$. 

Let $\pi: \Fl \to \breve G/\CP_K$ be the projection map. Then $\pi$ restricts to a surjective map $\Fl^\g \to Y_{K, \t}(\g)=(\breve G/\CP_K)^\g$. In particular, $\dim Y_{K, \t}(\g) \le \dim \Fl^\g$. On the other hand, since $Y^\reg_{K, \t}(g)$ is nonempty and open in $Y_{K, \t}(\g)$, $\pi \i(Y^\reg_{K, \t}(\g))$ is nonempty and open in $\Fl^\g$. By Theorem~\ref{thm: aff}~(1), $\dim \Fl^\g=\dim \pi \i(Y^\reg_{K, \t}(\g))$. Note that the fibers of the map $\pi: \pi \i(Y_{K, \t}(\g)) \to Y^{\reg}_{K, \t}(\g)$ are the Springer fibers in the finite flag associated with the regular elements and thus are $0$-dimensional. Hence $\dim(\pi \i(Y^\reg_{K, \t}(\g)))=\dim Y^{\reg}_{K, \t}(\g)$. So $\dim \Fl^\g=\dim Y^{\reg}_{K, \t}(\g) \le \dim Y_{K, \t}(\g)$. This finishes the proof. 

\section{Hodge--Newton decomposition}

\subsection{$P$-alcove elements}
We follow~\cite{GHKR2}. Let $\bP=\bM \bN$ be a semistandard parabolic subgroup. We say that $w \in \tW$ is a {\it $\bP$-alcove element} if $w$ is in the Iwahori--Weyl group $\tW_\bM$ of $\bM$ and for any relative root $\a$ of $\bG$ with the root subgroup $U_\a \subset \bN(\breve F)$, we have $U_\a \cap {}^{\dot w} \CI \subset U_\a \cap \CI$.

The Hodge--Newton decomposition for affine Deligne--Lusztig varieties was established in~\cite[Theorem 2.1.4]{GHKR2} for split groups and in~\cite[Theorem 3.3.1]{GHN} in general. We state the residually split group case here. In this case, the induced action of $\s$ on $\tW$ is trivial. 

\begin{theorem}\label{thm:HN}
Suppose that $\bP'=\bM' \bN'$ is a semistandard parabolic subgroup of $\bG'$ and $w \in \tW'$ is a $\bP'$-alcove element. Then $X^{\bG'}_w(b) \neq \emptyset$ implies that $[b] \cap \bM'(\breve F) \neq \emptyset$. Moreover, for $b \in \bM'(\breve F)$ with $\k_{\bM'}([b])=\k_{\bM'}([\dot w])$, the canonical immersion $\Fl_{\bM'} \to \Fl_{\bG'}$ induces $$\bJ_b^{\bM'}(F) \backslash X^{\bM'}_w(b) \cong \bJ_b^{\bG'} \backslash X^{\bG'}_w(b).$$ Here $\bJ_b^{\bM'}(F)$ is the $\s$-centralizer of $b$ in $\bM'(\breve F)$ and $X_w^{\bM'}(b)$ is the affine Deligne--Lusztig variety in the affine flag variety of $\bM'(\breve F)$. 
\end{theorem}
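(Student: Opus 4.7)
The plan is to follow the strategy pioneered by Görtz--Haines--Kottwitz--Reuman and Görtz--He--Nie, specialized to the residually split setting where the action of $\s$ on $\tW'$ is trivial. The proof breaks into two independent statements: the nonemptiness implication, and the bijection of quotients.

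First I would establish the nonemptiness implication: if $X_w(b) \neq \emptyset$ for a $\bP'$-alcove element $w$, then $[b]$ meets $\bM'(\breve F)$. The essential input is the Iwasawa decomposition $\breve G' = \bN'(\breve F) \cdot \bM'(\breve F) \cdot \CI'$. Given $g \i b \s(g) \in \CI' \dot w \CI'$, write $g=nmi$ accordingly; then $i (g \i b \s(g)) i \i \in \dot w \CI'$ and, after absorbing $m$ and $i$ into a $\s$-conjugation, one reduces to the case $g=n \in \bN'(\breve F)$ and $b' := n \i b \s(n) \in \bM'(\breve F) \cdot \CI' \cdot \dot w \CI'$. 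Writing $b'=b_\bM \cdot u$ with $b_\bM \in \bM'(\breve F)$ and $u$ in the pro-unipotent part coming from $\bN'$, I would use the $\bP'$-alcove inequality $U_\a \cap {}^{\dot w} \CI' \subset U_\a \cap \CI'$ for every root $\a$ of $\bN$ to show that if $u \neq 1$, then $\s$-conjugating by a suitable element of $\bN'(\breve F)$ strictly shrinks the depth of the $\bN'$-component of $b'$ (measured by a filtration along the root subgroups). Iterating and passing to the limit (using completeness of $\bN'(\breve F)$) yields $b$ $\s$-conjugate to an element of $\bM'(\breve F)$.

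Second, I would construct the isomorphism $\bJ_b^{\bM'}(F) \backslash X^{\bM'}_w(b) \cong J_b \backslash X_w(b)$ for $b \in \bM'(\breve F)$ with matching Kottwitz invariant. The canonical closed immersion $\Fl_{\bM'} \hookrightarrow \Fl_{\bG'}$ sends $X_w^{\bM'}(b)$ into $X_w(b)$ because $w \in \tW_{\bM'}$ and $\CI' \cap \bM'(\breve F)$ is an Iwahori subgroup of $\bM'(\breve F)$. Surjectivity modulo $J_b$ follows from the first step applied pointwise: any $g\CI' \in X_w(b)$ admits $j \in J_b$ with $j g \in \bM'(\breve F) \cdot \CI'$, and the $\bP'$-alcove condition ensures the resulting class lies in $X^{\bM'}_w(b)$. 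For injectivity of the induced map on quotients, if $g_1, g_2 \in X_w^{\bM'}(b)$ represent the same $J_b$-orbit in $X_w(b)$, then $g_2 = j g_1 i$ for $j \in J_b$ and $i \in \CI'$; writing $j = n_j m_j$ via the $\bP'$-decomposition and using that $g_1,g_2 \in \bM'(\breve F) \cdot (\CI' \cap \bM'(\breve F))$ together with the alcove condition forces $n_j \in \bN'(\breve F) \cap J_b = \{1\}$, so $j \in \bJ_b^{\bM'}(F)$.

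The main obstacle I anticipate is the inductive control in the first step: one needs a carefully chosen filtration $\bN'(\breve F) = \bigcup N_k$ by subgroups adapted to $\dot w$ so that conjugation by $N_k$ strictly decreases the $\bN'$-part of $b'$ modulo $N_{k+1}$, and one must show convergence rather than oscillation. The $\bP'$-alcove inequality is exactly what makes this induction monotone in both the equal and mixed characteristic cases; verifying this monotonicity uniformly across all affine root subgroups of $\bN$ is the technical heart of the argument, and is where the passage from the split case of \cite{GHKR2} to the residually split (possibly ramified) case of \cite{GHN} requires real care.
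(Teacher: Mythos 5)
You should first note that the paper does not prove Theorem~\ref{thm:HN} at all: it is quoted from \cite[Theorem 2.1.4]{GHKR2} and \cite[Theorem 3.3.1]{GHN}, so the relevant comparison is with those proofs (whose successive-approximation mechanism the paper itself re-adapts, for ordinary conjugation, in Lemma~\ref{lem:appox} and Theorem~\ref{thm:conj}). Your first part --- shrinking the $\bN'$-component by $\s$-conjugation along a filtration of the root subgroups, with monotonicity coming from the $\bP'$-alcove inequality $U_\a \cap {}^{\dot w}\CI' \subset U_\a \cap \CI'$ --- is indeed the engine of the cited argument, and that portion of your outline is sound in shape.

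The bijection part, however, has two genuine gaps. For surjectivity, after adjusting $g$ by $\CI'$ so that $b':=g^{-1}b\s(g)\in (\CI'\cap \bM'(\breve F))\,\dot w\,(\CI'\cap \bM'(\breve F))$, you still need an $m\in\bM'(\breve F)$ with $b'=m^{-1}b\s(m)$; only then is $gm^{-1}\in J_b$ and the $J_b$-orbit meets $\Fl_{\bM'}$ in a point of $X^{\bM'}_w(b)$. This is precisely where the hypothesis $\k_{\bM'}([b])=\k_{\bM'}([\dot w])$ enters, via Kottwitz's classification of $\s$-conjugacy classes of $\bM'(\breve F)$ together with a Newton-point compatibility guaranteed by the $\bP'$-alcove condition; your proposal never uses the $\k_{\bM'}$ hypothesis, and the phrase ``the $\bP'$-alcove condition ensures the resulting class lies in $X^{\bM'}_w(b)$'' does not supply this step. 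For injectivity, the claim $\bN'(\breve F)\cap J_b=\{1\}$ is false in general: take $w=1$ (a $\bP'$-alcove element for every semistandard $\bP'$) and $b=1$, so that $J_b=\bG'(F')\supset \bN'(F')\neq\{1\}$; moreover there is no a priori reason that $j\in\bP'(\breve F)$, so the factorization $j=n_jm_j$ is unjustified. The correct route is the refinement of the $\s$-conjugation theorem of \cite{GHKR2}: if two elements of $(\CI'\cap \bM'(\breve F))\,\dot w\,(\CI'\cap \bM'(\breve F))$ are $\s$-conjugate under $\CI'$, they are already $\s$-conjugate under $\CI'\cap\bM'(\breve F)$; applying this to $b_i=m_i^{-1}b\s(m_i)$, $i=1,2$, converts a relation $m_2=jm_1i$ with $j\in J_b$, $i\in\CI'$ into one with $j'\in\bJ_b^{\bM'}(F')$, which is what injectivity requires. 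Without these two ingredients the proposal does not yield the stated isomorphism of quotients.
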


Note that $1 \in \tW$ is a $\mathbf B$-alcove element,  where $\mathbf B$ is the standard Borel subgroup of $\bG$. In this case, $Y_1(\g) \neq \emptyset$ for any $\g \in \CI$. However, there exists $\g \in \CI$ such that $\{\g\} \cap \bT(L)=\emptyset$. Thus the analog of Theorem~\ref{thm:HN} for affine Lusztig varieties does not hold in general.

The main purpose of this section is to show that the analog of Hodge--Newton decomposition for affine Lusztig varieties is valid when $w$ is a minimal length element in its conjugacy class. 

\subsection{The semistandard parabolic subgroup $\bP_{\nu_w}$}\label{sec:power}
For $w \in \tW$, let $\bP_{\nu_w}$ be the parabolic subgroup of $\bG$ generated by $\bT$ and the root subgroups $\mathbf U_\a$ with $\<\a, \nu_w\> \ge 0$, $\bM_{\nu_w}$ be the Levi subgroup of $\bP_{\nu_w}$ generated by $\bT$ and the root subgroups $\mathbf U_\a$ with $\<\a, \nu_w\>=0$, and $\bP^-_{\nu_w}$ be the opposite parabolic subgroup of $\bG$ generated by $\bT$ and the root subgroups $\mathbf U_\a$ with $\<\a, \nu_w\> \le 0$. We have the Levi decompositions $\bP_{\nu_w}=\bM_{\nu_w} \bN_{\nu_w}$ and $\bP^-_{\nu_w}=\bM_{\nu_w} \bN^-_{\nu_w}$, where $\bN_{\nu_w}$ is the unipotent radical of $\bP_{\nu_w}$ and $\bN^-_{\nu_w}$ is the unipotent radical of $\bP^-_{\nu_w}$.

Let $\breve M_{\nu_w}=\bM_{\nu_w}(\breve F)$ and  $\tW_{\nu_w}$ be the Iwahori--Weyl group of $\breve M_{\nu_w}$. Let $\CI_{\bM_{\nu_w}}=\CI \cap \breve M_{\nu_w}$. Then $\CI_{\bM_{\nu_w}}$ is an Iwahori subgroup of $\breve M_{\nu_w}$. We consider the length function on $\tW_{\nu_w}$ determined by $\CI_{\bM_{\nu_w}}$.  

By~\cite[Theorem 1.7]{Nie15}, if $w$ is a minimal length element in its conjugacy class in $\tW$, then $w$ is a $\bP_{\nu_w}$-alcove element. By~\cite[Proposition 4.5]{HN15}, $w$ is a minimal length element in its conjugacy class in $\tW_{\nu_w}$. Note that this is a nontrivial result, since the length function on $\tW_{\nu_w}$ is not the restriction of the length function on $\tW$. By definition, $\nu_w$ is central in the root system of $\bM_{\nu_w}$. By~\cite[Corollary 2.8]{HN14}, there exists a length-zero element $\t$ of $\tW_{\nu_w}$ and a $\dot \t$-stable standard parahoric subgroup $\CP$ of $\breve M_{\nu_w}$ such that $w=u \t$ for some element $u$ in the finite Weyl group $W_{\CP}$ of $\CP$. In particular, $\dot w \in \CP \dot \t$ and hence $\CI_{\bM_{\nu_w}} \dot w \CI_{\bM_{\nu_w}} \subset \CP \dot \t$. Note that for any $k \in \BN$, we have $(\CP \dot \t)^k=\CP \dot \t^k=\CP \dot w^k \CP$. Hence  we have the following:
\begin{enumerate}
\item[(a)] {\it Suppose that $w$ is a minimal length element in its conjugacy class in $\tW$. Then there exists a standard parahoric subgroup $\CP$ of $\breve M_{\nu_w}$ such that $$(\CI_{\bM_{\nu_w}} \dot w \CI_{\bM_{\nu_w}})^k \subset \CP \dot w^k \CP$$ for all $k \in \BN$.}
\end{enumerate}

\subsection{Method of successive approximations}
A key ingredient in the proof of Hodge--Newton decomposition in~\cite{GHKR2} is the method of successive approximations. 

We prove the following result for minimal length elements. The statement is similar to the $\s$-conjugation action related to $\bP$-alcove elements~\cite[Lemma 6.1.1]{GHKR2}, but the proof is simpler. In this subsection, we simply write ${}^g h$ for $g h g \i$. 

\begin{lemma}\label{lem:appox}
Suppose that $w$ is a minimal length element in its conjugacy class in $\tW$. For $n \in \BN$, let $\CI_n \subset \CI$ be the $n$th principal congruent subgroup of $\CI$. Set $N_n=\CI_n \cap \bN_{\nu_w}(\breve F), N^-_n=\CI_n \cap \bN_{\nu_w}^-(\breve F)$ and $M_n=\CI_n \cap \bM_{\nu_w}(\breve F)$. Then, for $m \in \CI_{\bM_{\nu_w}}$ and $n \in \BN$, we have the following: 
\begin{enumerate}
    \item Given $i_+ \in N_n$, there exists $b_+ \in N_n$ such that $b_+ i_+ {}^{m \dot w} b_+ \i \in N_{n+1}$.
    \item Given $i_- \in N^-_n$, there exists $b_- \in N^-_n$ such that ${}^{(\dot w m) \i} b_- i_- b_- \i \in N^-_{n+1}$.
\end{enumerate}
\end{lemma}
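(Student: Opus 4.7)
My plan is to prove both statements by successive approximation along the filtration $\{N_k\}_{k\ge 0}$ (resp.\ $\{N^-_k\}$), in the spirit of the $\s$-twisted analogue~\cite[Lemma 6.1.1]{GHKR2} but considerably simpler thanks to the minimal-length hypothesis. First I would observe that $N_n/N_{n+1}$ is an abelian $\kk$-vector space (for $n \ge 1$ via $[N_n,N_n]\subset N_{2n}\subset N_{n+1}$; for $n = 0$ by decomposing along root subgroups and invoking the Chevalley commutation relations), and rewrite the congruence as a linear equation in the quotient. Setting $\Psi = \Ad(m\dot w)$, the condition $b_+ i_+ \cdot {}^{m\dot w}b_+^{-1} \in N_{n+1}$ linearizes to $(1-\Psi)\bar b_+ = -\bar i_+$, and the mirror rewriting for~(2) becomes $(1 - \Psi^{-1})\bar b_- = \bar i_-$ on $N^-_n/N^-_{n+1}$. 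Both equations will then be solved by the finite geometric series $\sum_{j \ge 0}\Psi^{\pm j}$, provided the relevant operator is shown to be nilpotent on the quotient.

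The main body of work is this nilpotency. By Section~3.2(a) ($w$ is a $\bP_{\nu_w}$-alcove element, via~\cite[Theorem~1.7]{Nie15}), one has $U_\alpha\cap\Ad(\dot w)(\CI)\subset U_\alpha\cap\CI$ for every relative root $\alpha$ of $\bN_{\nu_w}$; propagating this depth inequality level by level yields $\Ad(\dot w)(N_k)\subset N_k$, and since $m\in\CI_{\bM_{\nu_w}}$ normalizes both $\bN_{\nu_w}$ and $\CI_k$, also $\Psi(N_k)\subset N_k$. Next I would choose $k$ with $w^k = t^\lambda$, so that $\nu_w = \lambda/k$. By Section~3.2(a), $(m\dot w)^{kj}\in\CP\dot t^{j\lambda}\CP$ for every $j\in\BN$; conjugation by $\dot t^{j\lambda}$ scales each root space $\mathfrak u_\alpha$ of $\bN_{\nu_w}$ in $N_n/N_{n+1}$ by $\e^{j\langle\alpha,\lambda\rangle}$, which vanishes in the quotient as soon as $j\langle\alpha,\lambda\rangle\ge 1$, while the $\CP$-factors act through the finite reductive quotient and preserve the graded filtration. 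Hence $\Psi^N(N_n)\subset N_{n+1}$ for $N$ sufficiently large, so $1-\Psi$ is invertible on $N_n/N_{n+1}$ with inverse $\sum_{j=0}^{N-1}\Psi^j$; setting $\bar b_+ = -\sum_{j=0}^{N-1}\Psi^j\bar i_+$ and lifting to $N_n$ proves~(1).

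For~(2), I would invoke the mirror argument: since inversion is length-preserving, $w^{-1}$ is also a minimal length element in its conjugacy class, with $\nu_{w^{-1}} = -\nu_w$, and so Nie's theorem applied to $w^{-1}$ gives that $w^{-1}$ is $\bP^-_{\nu_w}$-alcove, i.e., $U_\alpha\cap\Ad(\dot w^{-1})(\CI)\subset U_\alpha\cap\CI$ for every root $\alpha$ of $\bN^-_{\nu_w}$. Because $\langle\alpha,-\lambda\rangle > 0$ for $\alpha\in\bN^-_{\nu_w}$, the identical computation yields nilpotency of $\Psi^{-1}$ on $N^-_n/N^-_{n+1}$, inversion of $1 - \Psi^{-1}$, and the required $b_-$. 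The main obstacle I anticipate is to rigorously promote the level-$0$ alcove condition $U_\alpha\cap\Ad(\dot w)(\CI)\subset\CI$ to its level-$k$ analogue, and to verify that the $\CP$-factors in $(m\dot w)^{kj}\in\CP\dot t^{j\lambda}\CP$ really preserve the graded filtration $N_\bullet/N_{\bullet+1}$: for split groups these are elementary affine-root computations, but in the ramified setting both points require careful Bruhat--Tits bookkeeping, and a secondary subtlety at $n=0$ is the abelianness of $N_0/N_1$, which must be argued via root subgroups rather than a commutator bound.
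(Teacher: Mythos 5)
Your overall skeleton coincides with the paper's: both arguments use the $\bP_{\nu_w}$-alcove property to get $\Ad(m\dot w)$-stability of $N_n$, then show that a sufficiently high power of $\Ad(m\dot w)$ carries $N_n$ into $N_{n+1}$ via $(m\dot w)^k\in\CP\dot w^k\CP$ together with the positivity of $\<\a,\nu_w\>$ on the roots of $\bN_{\nu_w}$, and finally take a finite geometric series; the paper's conjugator $b_+=({}^{(m\dot w)^{k-1}}i_+\i)\cdots({}^{m\dot w}i_+\i)\,i_+\i$ is exactly your series, written multiplicatively. Your treatment of part (2), applying Nie's theorem to $w\i$ (also of minimal length, with $\nu_{w\i}=-\nu_w$), is a legitimate variant of the paper's ``proved in the same way.''

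However, two of your supporting claims are false as stated, and one of them is where the real work lies. First, $N_0/N_1$ is not abelian in general: when $\bN_{\nu_w}$ is nonabelian (e.g. $\bG=\SL_3$ with $\nu_w$ regular) this quotient is essentially the $\kk$-points of a nonabelian unipotent group; the Chevalley commutation relations only give $[N_0,N_0]\subset N_0$, supported on non-simple roots, not $[N_0,N_0]\subset N_1$. So your linearization breaks down at $n=0$, a case the lemma includes, whereas the paper's multiplicative telescoping needs no commutativity at all. Second, and more seriously, the $\CP$-factors in $(m\dot w)^{kj}=p_1\dot t^{j\l}p_2$ do not ``preserve the graded filtration'': $\CP$ strictly contains $\CI_{\bM_{\nu_w}}$, and conjugation by a representative $\dot u$ with $u\in W_M$ shifts the depth of each root subgroup $U_\a$, $\a$ a root of $\bN_{\nu_w}$, downwards by as much as $\ell(u)$; consequently your threshold $j\<\a,\l\>\ge 1$ is not sufficient. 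This is precisely the technical content of the paper's proof: one refines $\CP\dot w^k\CP=\sqcup_{u\in W_M}\CI_{\bM}\dot u\dot w^k\CI_{\bM}$, uses the Iwahori decomposition $\CI=\CI_{\bN^-}\CI_{\bM}\CI_{\bN}$ to show that $(m\dot w)^k$ lies in a single coset $\CI_{\bM}\dot u\dot w^k\CI_{\bM}$ (so the only depth-lowering factor is the single $\dot u$), and chooses $k$ with $\<k\nu_w,\a\>\ge\max\{\ell(u);u\in W_M\}+1$ so that the gain from $\dot w^k$ beats the loss from $\dot u$. Without this step, or an equivalent uniform bound on the depth shift caused by the $\CP$-factors, your nilpotency claim --- and hence the whole argument --- is not established.
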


\begin{proof}
We prove part (1). Part (2) is proved in the same way. 

We simply write $\bM$ for $\bM_{\nu_w}$. Since $w$ is a $\bP_{\nu_w}$-alcove element, we have ${}^{\dot w} N_n \subset N_n$. Hence  ${}^{m \dot w} N_n \subset N_n$ and ${}^{(m \dot w)^k} N_n \subset N_n$ for all $k \in \BN$. By \S~\ref{sec:power}, there exists a standard parahoric subgroup $\CP$ of $\breve M_{\nu_w}$ such that $(m \dot w)^k \subset \CP \dot w^k \CP$ for all $k$. 

Note that $\CP \dot w^k \CP=\bigsqcup_{u \in W_{M}} \CI_{\bM} \dot u \dot w^k \CI_{\bM}$, where $W_M$ is the (finite) relative Weyl group of $\bM$. We choose $k \in \BN$ such that $\<k \nu_w, \a\> \ge \max\{\ell(u); u \in W_{M}\}+1$ for all roots in $\bN_{\nu_w}$. Let $\CI_{\bN}=\CI \cap \bN_{\nu_w}(\breve F)$ and $\CI_{\bN^-}=\CI \cap \bN_{\nu_w}^-(\breve F)$. We have the Iwahori decomposition $\CI=\CI_{\bN^-} \CI_{\bM} \CI_{\bN}$. Then, for any $u \in W_M$, $$\CI \dot u \dot w^k \CI=(\CI_{\bN} \CI_{\bM} \CI_{\bN^-}) \dot u \dot w^k (\CI_{\bN} \CI_{\bM} \CI_{\bN^-})=\CI_{\bN} \CI_{\bM} \dot u \dot w^k \CI_{\bM} \CI_{\bN^-}.$$ Thus $(m \dot w)^k \in N_0 \CI_{\bM} \dot u \dot w^k \CI_{\bM} N^-_0 \cap \breve M=\CI_{\bM} \dot u \dot w^k \CI_{\bM}$. Then $(m \dot w)^k=i \dot u \dot w^k i'$ for some $i, i' \in \CI_{\bM}$ and $u \in W_{M}$. So ${}^{(m \dot w)^k} N_n \subset {}^{i \dot w^k \dot u} N_n \subset {}^i N_{n+1} \subset N_{n+1}$. 

We take $b_+=({}^{(m \dot w)^{k-1}} i_+ \i) \cdots ({}^{m \dot w} i_+ \i) i_+ \i$. We then have $b_+ i_+ {}^{m \dot w} b_+ \i={}^{(m \dot w)^{k}} i_+ \in N_{n+1}$. 
\end{proof}

We now prove the following result, analogous to~\cite[Theorem 2.1.2]{GHKR2} for $\bP$-alcove elements and $\s$-conjugation action. 

\begin{theorem}\label{thm:conj}
Suppose that $w$ is a minimal length element in its conjugacy class in $\tW$. Let $\CI \times^{\CI_{\bM_{\nu_w}}} \CI_{\bM_{\nu_w}} \dot w \CI_{\bM_{\nu_w}}$ be the quotient of $\CI \times \CI_{\bM_{\nu_w}} \dot w \CI_{\bM_{\nu_w}}$ by the action of $\CI_{\bM_{\nu_w}}$ defined by $i \cdot (g, h)=(g i \i, i h i \i)$. Then 
the map $$\phi: \CI \times^{\CI_{\bM_{\nu_w}}} \CI_{\bM_{\nu_w}} \dot w \CI_{\bM_{\nu_w}} \to \CI \dot w \CI, \qquad (i, m) \mapsto i m i \i$$ is surjective. 
\end{theorem}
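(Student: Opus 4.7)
The strategy is a method of successive approximations, parallel to the proof of~\cite[Theorem 2.1.2]{GHKR2} in the $\s$-conjugation setting, with Lemma~\ref{lem:appox} playing the role of~\cite[Lemma 6.1.1]{GHKR2}. Abbreviate $\bM=\bM_{\nu_w}$, $\bN^{\pm}=\bN^{\pm}_{\nu_w}$, and $N_k^{\pm}=\CI_k\cap \bN^{\pm}(\breve F)$ for $k\geq 0$.

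First I would establish the decomposition
$$
\CI\dot w\CI \;=\; \CI_{\bN}\cdot(\CI_{\bM}\dot w\CI_{\bM})\cdot\CI_{\bN^-}.
$$
Starting from $g=i_1\dot w i_2$ with Iwahori decompositions $i_1=n_-^1 m_1 n_+^1$ and $i_2=n_+^2 m_2 n_-^2$, use the $\bP_{\nu_w}$-alcove property $\dot w\CI_{\bN}\subset\CI_{\bN}\dot w$ and its mate $\CI_{\bN^-}\dot w\subset\dot w\CI_{\bN^-}$ (which holds because $w^{-1}$ is a $\bP^-_{\nu_w}$-alcove element by~\cite[Theorem~1.7]{Nie15} applied to $w^{-1}$, using $\nu_{w^{-1}}=-\nu_w$) to push all $\CI_{\bN}$-factors to the left of $\dot w$ and all $\CI_{\bN^-}$-factors to the right; intermediate products $\CI_{\bN^-}\CI_{\bN}\subset \CI$ are reabsorbed using the Iwahori decomposition of $\CI$ and the fact that $\bM$ normalizes both $\bN^{\pm}$. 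This produces a base-case decomposition $g=n_+^{(0)}h^{(0)}n_-^{(0)}$ with $n_{\pm}^{(0)}\in N_0^{\pm}$ and $h^{(0)}\in\CI_{\bM}\dot w\CI_{\bM}$.

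I would then construct inductively $i_k\in\CI$ with $i_0=1$ and $i_{k+1}i_k^{-1}\in\CI_k$, such that $g_k:=i_k^{-1}gi_k=n_+^{(k)}h^{(k)}n_-^{(k)}$ with $n_{\pm}^{(k)}\in N_k^{\pm}$ and $h^{(k)}\in\CI_{\bM}\dot w\CI_{\bM}$. Writing $h^{(k)}=m_k\dot w m_k'$, Lemma~\ref{lem:appox}(1) applied to $(i_+,m)=(n_+^{(k)},m_k)$ supplies $b_+\in N_k$ so that $b_+ n_+^{(k)}\,{}^{m_k\dot w}b_+^{-1}\in N_{k+1}$; Lemma~\ref{lem:appox}(2) applied to $(i_-,m)=(n_-^{(k)},m_k')$ supplies $b_-\in N_k^-$ so that ${}^{(\dot w m_k')^{-1}}b_- n_-^{(k)} b_-^{-1}\in N_{k+1}^-$. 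Conjugating $g_k$ by the product $b_-b_+$ and regrouping the result into the form $n_+^{(k+1)}h^{(k+1)}n_-^{(k+1)}$ (again via the Iwahori decomposition, to absorb cross-terms) completes the inductive step.

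Finally, $(i_k)$ is Cauchy in the pro-finite topology on $\CI$ and converges to some $i\in\CI$; since $n_{\pm}^{(k)}\in N_k^{\pm}\to\{1\}$, one has $\lim h^{(k)}=i^{-1}gi$, and because $i^{-1}gi\in\CI\dot w\CI$ while all closure points of $\CI_{\bM}\dot w\CI_{\bM}$ outside itself lie in Iwahori--Bruhat cells $\CI u\CI$ with $u\neq w$, one concludes $i^{-1}gi\in\CI_{\bM}\dot w\CI_{\bM}$, proving $g=\phi(i,i^{-1}gi)$ and hence surjectivity. The principal obstacle is the bookkeeping in the inductive step: since $\bN$ and $\bN^-$ do not commute, conjugation by $b_+$ also perturbs the $n_-^{(k)}$- and $h^{(k)}$-factors through commutators $[N_k,N_k^-]\subset\CI_{2k}$, and one must verify via the Chevalley commutator relations that these corrections can be absorbed into the next iteration without breaking the three-factor shape.
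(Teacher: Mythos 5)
Your overall strategy (successive approximation driven by Lemma~\ref{lem:appox}) is in the right spirit, and your preliminary decomposition $\CI \dot w \CI=\CI_{\bN}\,(\CI_{\bM}\dot w\CI_{\bM})\,\CI_{\bN^-}$ is correct, but the inductive step as you describe it has a genuine gap, and it is precisely the point your last sentence waves at. First, you invoke Lemma~\ref{lem:appox}(1) with the twist ${}^{m_k\dot w}$ while the actual middle factor is $h^{(k)}=m_k\dot w m_k'$. When you transport $b_+^{-1}$ from the right end back across $m_k'$ and $n_-^{(k)}$, you pick up the factor ${}^{(m_k')^{-1}}(b_+)\,b_+^{-1}$, which lies in $N_k$ but in general \emph{not} in $N_{k+1}$: conjugation by $\CI_{\bM}$ only preserves the congruence level (e.g.\ a torus element of $\CI_{\bM}$ rescales root coordinates by a unit), so the $\bN$-depth of your word is not improved at all and the induction makes no progress. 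This could be repaired by restating Lemma~\ref{lem:appox} with ${}^{m\dot w}$ replaced by conjugation by an arbitrary element of $\CI_{\bM}\dot w\CI_{\bM}$ (the paper's proof of the lemma does extend, since it only uses $(\CI_{\bM}\dot w\CI_{\bM})^k\subset\CP\dot w^k\CP$), but you neither state nor prove this stronger form. Second, your control of the cross terms rests on $[N_k,N_k^-]\subset\CI_{2k}$, which at $k=0$ gives only $\CI_0=\CI$, so the very first step of the induction is not controlled by your estimate; and even for $k\ge 1$ the claim that all such corrections can be redistributed into the three-factor shape $n_+h\,n_-$ without raising the level of the $N^{\pm}$-factors is exactly the hard bookkeeping, which you acknowledge but do not carry out.

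This is also where your route genuinely diverges from the paper, and why the paper's argument is simpler: after reducing via a Cartesian diagram to the surjectivity of $({}^{\dot w}\CI\cap\CI)\times^{{}^{\dot w}\CI_{\bM}\cap\CI_{\bM}}\dot w\CI_{\bM}\to\dot w\CI$, the paper replaces your coarse congruence filtration $\CI_k$ by a \emph{generic Moy--Prasad filtration} $\CI[r]$ whose graded pieces are one-dimensional (a single affine root group, or a torus piece). At each stage there is exactly one root direction to correct --- handled by Lemma~\ref{lem:appox} if the root lies in $\bN_{\nu_w}$ or $\bN^-_{\nu_w}$, and absorbed into $\CI_{\bM}$ if it lies in $\bM_{\nu_w}$ --- and since each $\CI[r]$ is normal in $\CI$, all error terms automatically fall strictly deeper into the filtration. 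This eliminates both problems (the $m_k'$-twist and the $[N,N^-]$ cross terms) that your simultaneous treatment of the whole $N_k$- and $N_k^-$-blocks creates. Your concluding limit argument (Cauchy sequence of conjugators, identification of the limit's double coset) is fine in substance, but the body of the induction needs to be rebuilt along one of these lines before the proof stands.
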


\begin{proof}
The proof is similar to that in~\cite[\S\S4 and 6]{GHKR2}. We simply write $\CI_{\bM}$ for $\CI_{\bM_{\nu_w}}$. Consider the following Cartesian diagram:
\[
\xymatrix{
({}^{\dot w} \CI \cap \CI) \times^{{}^{\dot w} \CI_{\bM} \cap \CI_{\bM}} \, \dot w \CI_{\bM} \ar[r]^-{\phi} \ar[d] & \dot w \CI \ar[d] \\
\CI \times^{\CI_{\bM}} \CI_{\bM} \dot w \CI_{\bM} \ar[r]^-{\phi} & \CI \dot w \CI 
}
\]

It suffices to prove that the map $({}^{\dot w} \CI \cap \CI) \times^{{}^{\dot w} \CI_{\bM} \cap \CI_{\bM}} \, \dot w \CI_{\bM} \to \dot w \CI$ is surjective. Following~\cite[\S 6]{GHKR2}, a generic Moy--Prasad filtration gives a filtration $\CI=\cup_{r \ge 0} \CI[r]$ with $\CI[r] \supset \CI[s]$ for $r<s$ such that 
\begin{enumerate}
    \item each $\CI[r]$ is normal in $\CI$;
    \item \begin{enumerate}
        \item either there exists a positive affine root $\a$ such that the embedding $U_\a \subset \CI$ induces an isomorphism $U_\a/U_{\a_+} \cong \CI[r]/\CI[r^+]$;
        \item or $(\bT(\breve F) \cap \CI[r])/(\bT(\breve F) \cap \CI[r^+]) \cong \CI[r]/\CI[r^+]$.
    \end{enumerate}
\end{enumerate}

We show that any element in $\dot w \CI_{\bM} \CI[r]$ is conjugate under ${}^{\dot w} \CI \cap \CI$ to an element of $\dot w \CI_{\bM} \CI[r^+]$ and that the conjugators can be taken to be small when $r$ is large. 

If $(\bT(\breve F) \cap \CI[r])/(\bT(\breve F) \cap \CI[r^+]) \cong \CI[r]/\CI[r^+]$, then $\CI_{\bM} \CI[r]=\CI_{\bM} \CI[r^+]$ and we may just take the conjugator $h_r$ to be $1$. If $U_\a/U_{\a_+} \cong \CI[r]/\CI[r^+]$, then for any $g \in \dot w \CI_{\bM} \CI[r]$, we may use Lemma~\ref{lem:appox} to produce a conjugator $h_r$ (suitably small when $r$ is large) such that $h_r g h_r \i \in \dot w \CI_{\bM} \CI[r^+]$. 

Now, for any $g \in \dot w \CI$, we construct the conjugators $h_r$ such that $$(h_r \cdots h_0) g (h_r \cdots h_0) \i \in \dot w \CI_{\bM} \CI[r^+].$$ Since $h_r$ is suitably small when $r$ is large, the convergent product $h=\lim_{r \to \infty} h_r \cdots h_0$ exists and $h g h \i \in \dot w \CI_{\bM}$. This finishes the proof. 
\end{proof}

\subsection{Hodge--Newton decomposition for ordinary conjugation}

We now prove the main result of this section. 

\begin{theorem}\label{thm:HN1}
Suppose that $w$ is a minimal length element in its conjugacy class in $\tW$. Then for any $\g \in \breve G$, $Y^{\bG}_w(\g) \neq \emptyset$ if and only if $\{\g\} \cap \CI_{\bM_{\nu_w}} \dot w \CI_{\bM_{\nu_w}} \neq \emptyset$. Moreover, for $\g \in \CI_{\bM_{\nu_w}} \dot w \CI_{\bM_{\nu_w}}$, we have $Z_{\breve M_{\nu_w}}(\g)=Z_{\breve G}(\g)$ and  $$Y^{\bM_{\nu_w}}_w(\g) \cong Y^{\bG}_w(\g).$$
\end{theorem}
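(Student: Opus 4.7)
The plan is to derive all three assertions from Theorem~\ref{thm:conj} together with a Newton-cocharacter argument, applied to the given minimal length element $w$.

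First I would handle the nonemptiness characterization. The ``if'' direction is immediate from the inclusion $\CI_{\bM_{\nu_w}} \dot w \CI_{\bM_{\nu_w}} \subset \CI \dot w \CI$, since any conjugate of $\g$ landing in the former gives a point of $Y_w(\g)$. For the ``only if'' direction, given $g\CI \in Y_w(\g)$, Theorem~\ref{thm:conj} produces $i \in \CI$ and $m \in \CI_{\bM_{\nu_w}} \dot w \CI_{\bM_{\nu_w}}$ with $g^{-1}\g g = i m i^{-1}$. Then $m = (gi)^{-1}\g(gi)$ lies in $\{\g\} \cap \CI_{\bM_{\nu_w}} \dot w \CI_{\bM_{\nu_w}}$.

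Next I would prove $Z_{\breve G}(\g) = Z_{\breve M_{\nu_w}}(\g)$ by showing that any element commuting with $\g$ must commute with the Newton cocharacter $\nu_w$ and hence lie in $\bM_{\nu_w} = Z_{\bG}(\nu_w)$. Concretely, by \S\ref{sec:power}(a) one has $\g^k \in \CP \dot w^k \CP$ for every $k \in \BN$ and some standard parahoric $\CP$ of $\breve M_{\nu_w}$. Taking $n$ with $w^n = t^{n \nu_w}$ displays $\g^n$ as a bounded-times-$t^{n\nu_w}$ element (up to Iwahori factors), so its polar/central part is $t^{\nu_w}$; anything centralizing $\g$ must then centralize this central cocharacter, placing it in the Levi $\bM_{\nu_w}$. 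The reverse inclusion is trivial.

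For the isomorphism $Y^{\bM_{\nu_w}}_w(\g) \cong Y_w(\g)$, the natural map $g \CI_{\bM_{\nu_w}} \mapsto g\CI$ is well-defined, and it is injective because $\breve M_{\nu_w} \cap \CI = \CI_{\bM_{\nu_w}}$. Surjectivity is where Theorem~\ref{thm:conj} combines with the centralizer equality: given $g\CI \in Y_w(\g)$, the theorem yields $i \in \CI$ with $m := (gi)^{-1}\g(gi) \in \CI_{\bM_{\nu_w}} \dot w \CI_{\bM_{\nu_w}}$, so the element $gi \in \breve G$ conjugates the regular semisimple element $m \in \breve M_{\nu_w}$ to $\g \in \breve M_{\nu_w}$. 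One then wants to adjust $gi$ within its coset of $Z_{\breve G}(m) = Z_{\breve M_{\nu_w}}(m)$ to land inside $\breve M_{\nu_w}$, which will give $g\CI = (gi)\CI$ in the image.

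The hard part will be exactly this last adjustment: verifying that the $\breve G$-conjugator between two regular semisimple elements of $\breve M_{\nu_w}$, both lying in $\CI_{\bM_{\nu_w}} \dot w \CI_{\bM_{\nu_w}}$, can be realized inside $\breve M_{\nu_w}$. After replacing $m$ by an $\breve M_{\nu_w}$-conjugate to align the maximal tori $T_m = Z_{\breve G}(m)$ and $T_\g = Z_{\breve G}(\g)$ (possible since all maximal tori of $\bM_{\nu_w}$ over $\breve F$ are conjugate), the remaining obstruction is a Weyl-group element $\sigma \in W_{\bG}(T_\g)$ with $\sigma\cdot m = \g$; one must show $\sigma \in W_{\bM_{\nu_w}}(T_\g)$. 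I would expect this rationality step to follow from the fact that $w$ is minimal length in its full conjugacy class in $\tW$ (not merely in $\tW_{\bM_{\nu_w}}$), pinning down $\CI_{\bM_{\nu_w}} \dot w \CI_{\bM_{\nu_w}}$ rigidly enough within $\CI \dot w \CI$ to rule out nontrivial cosets in $W_{\bG}(T_\g)/W_{\bM_{\nu_w}}(T_\g)$.
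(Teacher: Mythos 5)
Your use of Theorem~\ref{thm:conj} for the nonemptiness statement matches the paper and is fine. The gap lies in everything after that. The paper's proof rests on one key claim which you never establish: if $\g_1,\g_2 \in \CI_{\bM_{\nu_w}}\dot w\CI_{\bM_{\nu_w}}$ and $g\in\breve G$ satisfies $g\g_1 g^{-1}=\g_2$, then $g\in\breve M_{\nu_w}$. Applied with $\g_1=\g_2=\g$ this gives $Z_{\breve M_{\nu_w}}(\g)=Z_{\breve G}(\g)$, and applied to the conjugator $gi$ produced by Theorem~\ref{thm:conj} it gives surjectivity of $Y^{\bM_{\nu_w}}_w(\g)\to Y_w(\g)$ immediately: no ``adjustment'' of the conjugator is needed, because the conjugator itself already lies in $\breve M_{\nu_w}$. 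The paper proves this claim concretely: raise to powers so that $\g_1^{lk},\g_2^{lk}\in\CP t^{l\l}$ for all $l$ (using \S\ref{sec:power}(a) with $w^k=t^\l$), write $g\in\breve P\dot x\breve P$ with $\breve P=\bP_{\nu_w}(\breve F)$ and $x\in W_0$, use the contraction $\lim_{l\to\infty}t^{l\l}g_1t^{-l\l}=1$ on the factor in $\bN_{\nu_w}(\breve F)$ to kill it, and then use boundedness of the relevant parahoric double cosets to force $x(\l)=\l$, hence $x\in W_{\nu_w}$ and $g\in\breve M_{\nu_w}$.

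Your proposed substitutes for this claim do not hold up. The assertion that $\g^n$ has ``polar/central part $t^{\nu_w}$'' and that anything centralizing $\g$ must therefore centralize this cocharacter is not an argument: there is no such intrinsic decomposition of an element of $\breve G$, and making this precise is exactly the content of the missing lemma (the contraction/boundedness argument above). In the torus-alignment step you invoke that all maximal tori of $\bM_{\nu_w}$ over $\breve F$ are conjugate; this is true over an algebraically closed field but false over $L=\kk((\e))$ or $\breve F'$, where maximal tori fall into many rational conjugacy classes, so you cannot in general replace $m$ by an $\breve M_{\nu_w}$-conjugate with $Z_{\breve G}(m)=Z_{\breve G}(\g)$. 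Finally, the decisive step --- that the resulting element $\sigma$ lies in $W_{\bM_{\nu_w}}(T_\g)$ rather than merely in $W_{\bG}(T_\g)$ --- is stated only as an expectation with no proof; this is precisely where the real work lies, and without it neither the centralizer equality nor the surjectivity of $Y^{\bM_{\nu_w}}_w(\g)\to Y_w(\g)$ is established.
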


\begin{proof}
Set $\bM=\bM_{\nu_w}$. We first prove that 
\begin{enumerate}
\item[(a)] If $\g_1, \g_2 \in \CI_{\bM} \dot w \CI_{\bM}$, and $g \in \breve G$ with $g \g_1 g \i=\g_2$, then $g \in \breve M$. 
\end{enumerate}

By definition, $g \g_1^k g \i=\g_2^k$ for any $k \in \BN$. Choose $k \in \BN$ such that $w^k=t^\l$ for some coweight $\l$. By \S\ref{sec:power}~(a), there exists a standard parahoric subgroup $\CP$ of $\breve M$ such that $\g_1^{l k}, \g_2^{l k} \in \CP t^{l \l}=\CP t^{l \l}$ for any $l \in \BN$. Then $\CP t^{l \l} g \cap g t^{l \l} \CP \neq \emptyset$. Set $\breve P=\bP_{\nu_w}(\breve F)$, $\breve N=\bN_{\nu_w}(\breve F)$, and $\breve N^-=\bN^-_{\nu_w}(\breve F)$. We may assume that $g \in \breve P \dot x \breve P$ for some $x \in W_0$. Note that $\breve P \dot x \breve P \cong (\breve N \cap \dot x \breve N^- \dot x \i) \times \breve M \dot x \breve P$. We write $g$ as $g_1 g_2$ with $g_1 \in (\breve N \cap \dot x \breve N^- \dot x \i)$ and $g_2 \in \breve M \dot x \breve P$. Then $g t^{l \l} \CP \subset g_1 \breve M \dot x \breve P$ and $\CP t^{l \l} g \subset \CP (t^{l \l} g_1 t^{l \l}) \breve M \dot x \breve P$. Hence  $g_1 \cap \CP (t^{l \l} g_1 t^{-l \l}) \breve M \neq \emptyset$ for all $l \in \BN$. Since $g_1 \in \breve N$, we have $\lim_{l \to \infty} t^{l \l} g_1 t^{-l \l}=1$. Thus $g_1 \cap \CP (t^{l \l} g_1 t^{-l \l}) \breve M \neq \emptyset$ for all $l \in \BN$ implies that $g_1=1$. Hence  $g \in \breve M \dot x \breve P$. By the same argument, we have $g \in \breve M \dot x \breve M$. We write $g$ as $g=g_3 \dot x g_4$ for $g_3, g_4 \in \breve M$. By definition, any element in $\breve M$ commutes with $t^{l \l}$. Then $\CP g_3 t^{l \l} \dot x g_4 \cap g_3 t^{l x(\l)} \dot x g_4 \CP \neq \emptyset$ and hence $t^{l (x(\l)-\l)} \in g_3 \i \CP g_3 \dot x g_4 \CP g_4 \i \dot x \i$ for all $l \in \BN$. Note that for fixed $g_3$ and $g_4$, $g_3 \i \CP g_3 \dot x g_4 \CP g_4 \i \dot x \i$ is bounded. Thus we have $x(\l)=\l$. Hence $x \in W_{\nu_w}$ and $g \in \breve M \dot x \breve M=\breve M$. 

(a) is proved.

We now prove the theorem. If $Y_w(\g) \neq \emptyset$, then $\{\g\} \cap \CI \dot w \CI \neq \emptyset$. By Theorem~\ref{thm:conj}, $\{\g\} \cap \CI_{\bM} \dot w \CI_{\bM} \neq \emptyset$. We assume that $\g \in \CI_{\bM} \dot w \CI_{\bM}$. Note that $\CI_{\bM} \dot w \CI_{\bM} \subset \CI \dot w \CI$. Let $\Fl^{\bM}$ be the affine flag variety of $\breve M$. The image of $Y^{\bM}_w(\g)$ under the immersion $\Fl^{\bM} \to \Fl$ is contained in $Y_w(\g)$. 

Suppose that $g \CI \in Y_w(\g)$. By Theorem~\ref{thm:conj}, up to multiplying a suitable element in $\CI$, we may assume that $g \i \g g \in \CI_{\bM} \dot w \CI_{\bM}$. By (a), $g \in \breve M$. This shows that the image of $Y^{\bM}_w(\g)$ equals $Y_w(\g)$. The proof is finished. 
\end{proof}

\subsection{Regular semisimple elements}
In \S \ref{sec:ass}, we defined a natural map $f_{\bG, \bG'}$ from the set of conjugacy classes of $\breve G$ to the set of $\s$-conjugacy classes of $\breve G'$. We have shown that the map $f_{\bG, \bG'}$ is surjective. Now we show that each fiber contains some regular semisimple conjugacy classes of $\breve G$. In other words, the restriction of the map $f_{\bG, \bG'}$ to the set of regular semisimple conjugacy classes of $\breve G$ is still surjective. This answers a question of the referee. 

Let $[b] \in B_\s(\bG')$. Recall that the induced action of $\s$ on $\tW'$ is trivial. Following \S \ref{two-diagram}, let $C=C_{[b]}$ be the straight conjugacy class of $\tW'$ associated with $[b]$. Then $[b]=\breve G' \cdot_\s \CI' \dot w \CI'$ for any minimal length element $w$ of $C$. We use the natural identification of $\tW$ and $\tW'$, and regard $w$ as a minimal length element of the conjugacy class $C$ in $\tW$. 

By Theorem~\ref{thm:HN1}, $\breve G \cdot \CI \dot w \CI=\breve G \cdot \CI_{\bM_{\nu_w}} \dot w \CI_{\bM_{\nu_w}}$. Recall that $\bT(L)_0$ is the unique parahoric subgroup of $\bT(L)$. It is easy to see that $\bT(L)_0 \subset \CI_{\bM_{\nu_w}}$ and $\bT(L)_0 \dot w$ contains regular semisimple elements of $\breve G$. Let $\g \in \bT(L)_0 \dot w \cap \breve G^{\rs}$. Then $C_{\{\g\}}=C_{[b]}$ and $f_{\bG, \bG'}(\{\g\})=[b]$.

\section{Lusztig varieties and Deligne--Lusztig varieties}

\subsection{The groups $\bH$ and $\bH'$} In this section, we compare classical Lusztig varieties and classical Deligne--Lusztig varieties. 

Let $\bH$ is a connected reductive algebraic group over an algebraically closed field $\kk$. We set $H=\bH(\kk)$. Let $\d$ be a group automorphism on $H$. Let $B$ be a $\d$-stable Borel subgroup of $H$. Let $W_H$ be the (finite) Weyl group of $H$ and $\BS_H$ be the set of simple reflections of $W_H$. The group automorphism $\d$ on $H$ induces a length-preserving group automorphism on $W_H$ and a bijection on $\BS_H$. We denote these maps still by $\d$. 

Let $\kk'$ be an algebraic closure of a finite field $\BF_q$ and $\s$ be the Frobenius map of $\kk'$ over $\BF_q$. Let $\bH'$ be a connected reductive group over $\BF_q$. We set $H'=\bH'(\kk')$. We still denote the Frobenius endomorphism on $H'$ by $\s$. Fix a $\s$-stable Borel subgroup $B'$ of $H'$. Let $W_{H'}$ be the (finite) Weyl group of $H'$ and $\BS_{H'}$ be the set of simple reflections of $W_{H'}$. We say that $\bH'$ is {\it associated} with the pair $(\bH, \d)$ if there exists a length-preserving isomorphism from $W_{H'}$ to $W_H$ such that the following diagram is commutative:
\[
\xymatrix{
W_{H'} \ar[r]^{\cong} \ar[d]_{\s} & W_H \ar[d]^{\d} \\
W_{H'} \ar[r]^{\cong}             & W_H
}
\]

\subsection{Lusztig varieties and Deligne--Lusztig varieties}

For any $\g \in H$ and $w \in W_H$, the {\it Lusztig variety} is defined by $$\CB_{w, \g, \d}=\{g B \in H/B; g \i \g \d(g) \in B \dot w B\}.$$ The isomorphism classes of  Lusztig varieties depend on the $\d$-twisted conjugacy class $\{\g\}$ of $\g$ in $H$ and $w \in W_H$. The variety $\CB_{w, \g, \d}$ was introduced by Lusztig~\cite{Lu79} in the case where $\g$ is a regular semisimple element of $H$ and was later extended to arbitrary elements in $H$ in~\cite{Lu-char}. In the case where $w=1$, the Lusztig variety $\CB_{1, \g, \d}=\{g B \in H/B; g \i \g \d(g) \in B\}$ is the Springer fiber of $\g$. We simply write $\CB_{\g, \d}$ for $\CB_{1, \g, \d}$.  Lusztig varieties play an important role in the theory of character sheaves. 

Similarly, for any $w' \in W_{H'}$, the {\it Deligne--Lusztig variety} $X_{w'}$ is defined to be $$X_{w'}=\{g B' \in H'/B'; g \i \s(g) \in B' \dot w B'\}.$$ By Lang's theorem, any element in $H'$ is $\s$-conjugate to $1 \in H'$. Thus, unlike  Lusztig varieties,  Deligne--Lusztig varieties  depend only on the element $w' \in W_{H'}$. 

It is easy to see that $X_{w'}$ is always nonempty and is of dimension $\ell(w')$.  Lusztig varieties, on the other hand, are more complicated, since there are two parameters $\{\g\}$ and $w$ involved. 

The main purpose of this section is to compare the dimension of  Lusztig varieties and  Deligne--Lusztig varieties for associated $(\bH, \d)$ and $\bH'$. In the rest of this section, we fix such $(\bH, \d)$ and $\bH'$ and identify $W_H$ with $W_{H'}$ and $\s=\d$ as group automorphisms on $W_H$.

The main result of this section is the following. 

\begin{proposition}\label{le}
	Let $\g \in H$ and $w \in W_H$. Then \[\dim \CB_{w, \g, \d} \le \dim X_w+\dim \CB_{\g, \d}.\]
\end{proposition}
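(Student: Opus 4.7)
The plan is to reduce the dimension inequality to a statement about how the $\d$-twisted conjugacy class of $\g$ intersects Bruhat cells, and then establish the latter by induction on $\ell(w)$.

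First, consider the ``twisted Lang map'' $L = L_{\g, \d}: H \to H$ defined by $L(h) = h^{-1}\g\d(h)$. Its image is the $\d$-twisted conjugacy class $c_{\g, \d} = \{h^{-1}\g\d(h) : h \in H\}$, and its fibers are left cosets of the $\d$-centralizer $Z_\d(\g) = \{h \in H : \g\d(h)\g^{-1} = h\}$. Since $\CB_{w, \g, \d} = L^{-1}(B\dot{w}B)/B$ and $\CB_{\g, \d} = L^{-1}(B)/B$ as varieties (with $B$ acting on the right), a fiber-dimension computation gives
\[
\dim \CB_{w, \g, \d} = \dim Z_\d(\g) + \dim(c_{\g, \d} \cap B\dot{w}B) - \dim B,
\]
and the analogous formula for $\CB_{\g, \d}$. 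Since $\dim X_w = \ell(w)$, the proposition reduces to the intersection inequality
\[
\dim(c_{\g, \d} \cap B\dot{w}B) \le \ell(w) + \dim(c_{\g, \d} \cap B). \tag{$\star$}
\]

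Next, I would prove $(\star)$ by induction on $\ell(w)$, with the base case $w = e$ a tautology. For the step, it suffices to show that whenever $w = vs$ with $s$ simple and $\ell(w) = \ell(v) + 1$, one has $\dim(c_{\g, \d} \cap B\dot{w}B) \le 1 + \dim(c_{\g, \d} \cap B\dot{v}B)$. To establish this, consider the parabolic $P = P_s = B \cup B\dot{s}B$ (or, when $\d(s) \ne s$, the parabolic $P_K$ associated to the $\d$-orbit $K$ of $s$, assumed spherical) together with the projection $H \to H/P$. One has $B\dot{v}P = B\dot{v}B \sqcup B\dot{w}B$, and the restriction of this projection to $B\dot{v}P$ is a $P$-torsor over the Schubert cell $B\dot{v}P/P$ of dimension $\ell(v)$. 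The key computation is fiberwise: for each $P$-coset $gP$, the intersection $c_{\g, \d} \cap gP$ projects to the rank-one Levi $\bar{P} \cong L_s$, where the intersection with $gB$ (resp.\ with $gB\dot{s}B$) corresponds to the intersection of a $\bar\d$-twisted conjugacy class in $L_s$ with the Borel $B_{L_s}$ (resp.\ with the big cell $L_s \setminus B_{L_s}$). Because $L_s/B_{L_s} \cong \BP^1$, these intersections differ in dimension by at most $1$, which provides the fiberwise inequality; integrating over the base yields the single-step bound.

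The main obstacle is the detailed analysis of the rank-one Lusztig-type varieties in $L_s$, namely showing that the twisted conjugacy class's extra intersection with the open Bruhat cell of $L_s$ has dimension at most one higher than its intersection with $B_{L_s}$. The case analysis splits according to the image $\bar{x}$ of $g^{-1}\g\d(g)$ in $L_s$: when $\bar{x}$ is central, the entire twisted $P$-orbit lands in $B_{L_s}$ and the ``extra dimension'' is nonpositive; when $\bar{x}$ is noncentral, an explicit $SL_2$-type computation yields the exact dimension gap of $1$. The case $\d(s) \neq s$ requires working with the $\d$-orbit $K$ and the larger parabolic $P_K$, so that the fibers of $H \to H/P_K$ are higher-dimensional flag varieties for the Levi; the argument then parallels the Springer-theoretic analysis of Section~3, and the dimension estimate is obtained by an $|K|$-fold iteration of the simple-reflection case.
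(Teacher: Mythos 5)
Your opening reduction via the twisted Lang map is fine: the formula $\dim \CB_{w, \g, \d} = \dim Z_\d(\g) + \dim(c_{\g,\d} \cap B\dot w B) - \dim B$ is correct, so the proposition is indeed equivalent to your inequality $(\star)$. The gap is in the induction you propose for $(\star)$: the one-step claim ``if $w = vs$ with $\ell(w)=\ell(v)+1$ then $\dim(c_{\g,\d}\cap B\dot w B) \le 1 + \dim(c_{\g,\d}\cap B\dot v B)$'' is false. Take $H = SL_3$, $\d = \id$, and $\g = I + E_{13}$, so that $c_{\g,\id}$ consists of the elements $I+N$ with $N$ nilpotent of rank one. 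The Bruhat rank criteria for the cell of either length-two element $s_1s_2$ or $s_2s_1$ force conditions ($N_{21}\neq 0$, $N_{32}\neq 0$, $N_{31}=0$ in one case, and $N_{31}\neq 0$ together with the vanishing of a $2\times 2$ minor equal to $-N_{31}$ in the other) that are impossible for a rank-one matrix; hence $c_{\g,\id}$ meets neither length-two cell, i.e. $\CB_{s_1s_2,\g,\id} = \CB_{s_2s_1,\g,\id} = \emptyset$. Yet $I + E_{31}$ lies in $c_{\g,\id}\cap B\dot s_1\dot s_2\dot s_1 B$, so $\CB_{s_1s_2s_1,\g,\id} \neq \emptyset$ although $s_1s_2s_1 = (s_1s_2)\cdot s_1$ with $\ell(s_1s_2s_1) = \ell(s_1s_2)+1$. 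So the quantity $\dim(c_{\g,\d}\cap B\dot w B)$ is not monotone (even up to $+1$) under one-sided multiplication by a simple reflection, and your induction cannot get off the ground; note this does not contradict the proposition itself, whose bound here is $\ell(s_1s_2s_1)+\dim\CB_{\g,\id} = 4$.

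The fiberwise mechanism you invoke is also not available: for a coset $gP$ with $P = P_s$, the intersection $c_{\g,\d}\cap gP$ is just the trace of a global twisted class on a coset and is not identified with (a $\bar\d$-twisted class of $L_s$) intersected with the Borel or the big cell of $L_s$; the Levi-descent argument requires the defining condition $g^{-1}\g\d(g)\in P_K$ with the relevant Weyl group element lying in $W_K$, so that the base is the set of twisted fixed points on $H/P_K$ and the fibers are honest Lusztig varieties of the Levi. This is exactly how the paper's proof handles its base case, with $K = \supp_\d(w)$; it cannot be used to compare the cells of $v$ and $vs$ when $v\notin W_K$. The paper's actual route is different in both respects: it uses the two-sided Deligne--Lusztig reduction $w \mapsto s w \d(s)$ (Proposition~\ref{prop:DL}) together with the Geck--Pfeiffer/He--Nie theorem (Theorem~\ref{ux}) to reduce to elements of minimal length in their $\d$-conjugacy class, and for those it combines Lusztig's purity result (pure dimension $\ell(w)$ in the $\d$-elliptic case) with parabolic descent to $L_J$, $J=\supp_\d(w)$, in the non-elliptic case. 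The counterexample above shows why some input of this kind (cyclic-shift moves rather than one-sided multiplication, plus a genuine base-case theorem) is unavoidable.
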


The proof involves two parts: the base case for the minimal length elements $w$ in their $\d$-conjugacy classes and the reduction procedure to the base case. We discuss these in the rest of this section. 

\subsection{Base case}\label{sec:base} In this subsection, we prove Proposition~\ref{le} in the case where $w$ is of minimal length in its $\d$-twisted conjugacy class in $W_H$. Let $\supp(w)$ be the set of simple reflections that appear in some (or, equivalently, any) reduced expression of $w$ and $\supp_\d(w)=\bigcup_{i \in \BZ} \d^i(\supp(w))$. We say that $w$ is {\it $\d$-elliptic} if $\supp_\d(w)=\BS_H$. 

The following result was proved by Lusztig~\cite[Corollary 5.7]{Lu11b} in the case where $\d=id$. The same proof works for arbitrary $\d$. 
\begin{enumerate}
\item[(a)] {\it Let $w$ be a $\d$-elliptic element and of minimal length in its $\d$-twisted conjugacy class in $W_H$. If $\CB_{w, \g, \d} \neq \emptyset$, then it is of pure dimension $\ell(w)$.}
\end{enumerate}
Since $B \in \CB_{\g, \d}$, $\CB_{\g, \d}$ is always nonempty. Thus
\begin{enumerate}
\item[(b)] {\it For a $\d$-elliptic element $w$, we have $\dim \CB_{w, \g, \d} \le \dim \CB_{\g, \d}+\ell(w)$.}
\end{enumerate}

Next we consider the case where $w$ is not $\d$-elliptic. Let $J=\supp_\d(w)$ and $W_J$ be the parabolic subgroup of $W_H$ generated by $J$. Let $P_J \supset B$ be the standard parabolic subgroup of type $J$. Set $\CB^J_{\g, \d}=\{g P_J \in H/P_J; g \i \g \d(g) \in P_J\}$. Define the projection map $$\pi_J: H/B \to H/P_J, \qquad g B \mapsto g P_J.$$ Then, by definition, for any $x \in W_J$, $\pi_J$ sends $\CB_{x, \g, \d}$ into $\CB^J_{\g, \d}$.  

Let $g P_J \in \CB^J_{\g, \d}$. Set $\g'=g \i \g \d(g) \in P_J$. Let $L_J \subset P_J$ be the standard Levi subgroup, $B_J=B \cap L_J$ be the Borel subgroup of $L_J$, and $\g'_J \in L_J$ be the image of $\g'$ under the projection map $P_J \to L_J$. Since $J$ is $\d$-stable, both $P_J$ and $L_J$ are also $\d$-stable. For any $x \in W_J$, we have
\begin{align*}
\pi_J \i(g P_J) \cap \CB_{x, \g, \d} &=\{g g' B \in H/B; g' \in P_J, (g') \i g \i \g \d(g) \d(g') \in B \dot x B\} \\ & \cong \{g' B \in P_J/B; (g') \i \g' \d(g') \in B \dot x B\} \\ & \cong \{l B_J \in L_J/B_J; l \i \g'_J \d(l) \in B_J \dot x B_J\} \\ &=\CB^{L_J}_{x, \g'_J, \d}.
\end{align*}

Note that $w$ is of minimal length in its $\d$-twisted conjugacy class in $W$. Thus $w$ is of minimal length in its $\d$-twisted conjugacy class in $W_J$. By definition, $w$ is $\d$-elliptic in $W_J$. By (b),  $\dim \CB^{L_J}_{w, \g'_J, \d} \le \dim \CB^{L_J}_{\g'_J, \d}+\ell(w)$. The base case is established. 

\subsection{Reduction procedure} 
The reduction procedure is based on the following two ingredients: Deligne--Lusztig reduction and some combinatorial properties of the finite Weyl groups. 

The Deligne--Lusztig reduction method was introduced in~\cite[Proof of Theorem 1.6]{DL} for Deligne--Lusztig varieties. The same proof can be applied to Lusztig varieties. We state the result. 

\begin{proposition}\label{prop:DL}
Let $\g \in H$, $w \in W_H$ and $s \in \BS_H$.
\begin{enumerate}
\item If $\ell(s w \d(s))=\ell(w)$, then $\CB_{w, \g, \d}$ is isomorphic to $\CB_{s w \s(s), \g, \d}$ and $X_w$ is universally homeomorphic to $X_{s w \d(s)}$. 
\item If $s w \d(s)<w$, then we have the decomposition $$\CB_{w, \g, \d}=X_1 \sqcup X_2 \text{ and } X_w=X'_1 \sqcup X'_2,$$ where $X_1$ (resp. $X'_1$) is open in $\CB_{w, \g, \d}$ (resp. $X_w$) and is an $\BA^1$-bundle over $\CB_{s w, \g, \d}$ (resp. $X_{s w}$) with zero section removed, and $X_2$ (resp. $X'_2$) is an $\BA^1$-bundle over $\CB_{s w \d(s), \g, \d}$ (resp. $X_{s w \d(s)}$). 
\end{enumerate}
\end{proposition}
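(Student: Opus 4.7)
The plan is to mirror the original Deligne--Lusztig reduction argument from \cite[Proof of Theorem 1.6]{DL} and verify that every step survives when the Frobenius $\s$ is replaced by an arbitrary group automorphism $\d$ and when $1\in H$ is replaced by an arbitrary $\g\in H$. The geometric backbone is the projection to the minimal partial flag variety of type $s$,
\[
\pi_s:H/B\lra H/P_s,\qquad gB\mapsto gP_s,
\]
whose fibres are $\BP^1$'s. The whole analysis is driven by how the defining condition $g\i\g\d(g)\in B\dot wB$ (respectively $g\i\s(g)\in B\dot wB$) changes when one replaces $gB$ by the neighbouring Borel $g\dot sB$ in the same $\pi_s$-fibre.

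\textbf{Bruhat bookkeeping.} First I would record the Bruhat product formulas
\[
(B\dot sB)(B\dot wB)=
\begin{cases}B\dot{sw}B&\text{if }sw>w,\\ B\dot wB\cup B\dot{sw}B&\text{if }sw<w,\end{cases}
\]
and the symmetric formulas on the right with $\d(s)$. For $h=g\dot s$ we have
\[
h\i\g\d(h)=\dot s\i\bigl(g\i\g\d(g)\bigr)\d(\dot s)\in(B\dot sB)(B\dot wB)(B\dot{\d(s)}B),
\]
so $h\i\g\d(h)$ is forced to lie in one of the Bruhat cells indexed by $\{w,sw,w\d(s),sw\d(s)\}$, with the actual possibilities controlled by the two length comparisons $\ell(sw)$ vs.\ $\ell(w)$ and $\ell(w\d(s))$ vs.\ $\ell(w)$. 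The same identity holds verbatim for $X_w$ with $\d$ replaced by $\s$.

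\textbf{Case (1): $\ell(sw\d(s))=\ell(w)$.} Here either $sw>w$ and $w\d(s)<w$, or the opposite. In either situation the product $(B\dot sB)(B\dot wB)(B\dot{\d(s)}B)$ collapses to the single cell $B\dot{sw\d(s)}B$, so the assignment $gB\mapsto g\dot sB$ defines a morphism $\CB_{w,\g,\d}\to\CB_{sw\d(s),\g,\d}$; applying the same recipe with $w$ replaced by $sw\d(s)$ provides a two-sided inverse, yielding an isomorphism. For $X_w$ the same recipe gives a morphism to $X_{sw\d(s)}$ which is bijective on $\bar\BF_q$-points and an isomorphism of the underlying reduced schemes, hence a universal homeomorphism; the statement is not scheme-theoretic only because the comparison map conjugates by $\s(\dot s)$ rather than $\dot s$ itself, the familiar ``Frobenius-twist subtlety'' that distinguishes the two settings.

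\textbf{Case (2): $sw\d(s)<w$.} Then $sw<w$, $w\d(s)<w$, and the Bruhat product above opens up into the union of all four cells. I would stratify the $\BP^1$-fibre of $\pi_s$ over $gP_s\in H/P_s$ by the location of $h\i\g\d(h)$ for $h$ ranging over representatives of the fibre: one distinguished point contributes to $X_2$ (where the fibre hits $B\dot{sw\d(s)}B$), and the complementary $\BA^1$ contributes to $X_1$ via the open part that relates to $\CB_{sw,\g,\d}$. Carrying this out cleanly, I would introduce the correspondence
\[
\tilde\CB=\{(g_1B,g_2B)\in(H/B)^2:\;g_1P_s=g_2P_s,\;g_1\i\g\d(g_2)\in B\dot wB\}
\]
and use its two projections to $H/B$: one projection identifies the $g_1B\ne g_2B$ locus as an $\BA^1$-bundle over $\CB_{sw,\g,\d}$ with the zero section removed (this becomes $X_1$), while the other projection of the $g_1B=g_2B$ locus yields an $\BA^1$-bundle over $\CB_{sw\d(s),\g,\d}$ (this becomes $X_2$). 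Openness of $X_1$ and closedness of $X_2$ follow from the ``bigger Bruhat cell is open'' principle. The identical argument with $(\g,\d)=(1,\s)$ yields the decomposition of $X_w$, the only change being that the trivialisations of the $\BA^1$-bundles and the identification of the zero-section-removed locus now rely on Lang's theorem for the Frobenius.

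\textbf{Main obstacle.} The proof is structurally a known one; the genuine work is the combinatorial bookkeeping in the four sub-cases determined by the signs of $\ell(sw)-\ell(w)$ and $\ell(w\d(s))-\ell(w)$, together with a careful identification of which $\BA^1$-subset of each fibre corresponds to which smaller Lusztig variety. The one conceptual point worth isolating is the asymmetry in case (1) between the isomorphism obtained for $\CB_{w,\g,\d}$ and the universal homeomorphism obtained for $X_w$: the argument uses only Bruhat multiplication and the $\BP^1$-fibre structure of $\pi_s$, neither of which depends on $\d$ being a Frobenius or on $\g$ being trivial, but the Frobenius twist intervenes precisely at the point where one would hope to upgrade the reduction-bijection to a scheme-theoretic isomorphism in the $X_w$ case.
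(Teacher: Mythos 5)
Your overall strategy is the right one and is exactly what the paper intends (the paper gives no argument beyond citing the proof of \cite[Theorem 1.6]{DL} and noting it transposes verbatim, the only relevant structure being that $gB\mapsto \g\d(g)B$ is a morphism of $H/B$ twisting relative position by $\d$). However, several of your concrete steps fail. First, the assignment $gB\mapsto g\dot sB$ is not well defined: replacing $g$ by $gb$ with $b\in B$ changes the coset $g\dot sB$ unless $b\in B\cap \dot sB\dot s\i$, so your case (1) map does not exist as a morphism. Second, the assertion that $(B\dot sB)(B\dot wB)(B\dot{\d(s)}B)$ collapses to the single cell $B\,\dot s\dot w\dot{\d(s)}\,B$ when $\ell(sw\d(s))=\ell(w)$ is false: if, say, $sw>w$ (so $sw\d(s)<sw$), the product is $B\dot s\dot wB\cup B\dot s\dot w\dot{\d(s)}B$, and in the other nondegenerate subcase it is $B\dot w\dot{\d(s)}B\cup B\dot s\dot w\dot{\d(s)}B$; it is never a single cell (and the claimed dichotomy on the signs of $\ell(sw)-\ell(w)$, $\ell(w\d(s))-\ell(w)$ only holds when $sw\d(s)\neq w$). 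So even granting the map, membership of $h\i\g\d(h)$ in $B\dot s\dot w\dot{\d(s)}B$ would not follow. The actual mechanism is the unique intermediate Borel: for $gB$ with $\mathrm{inv}(gB,\g\d(g)B)=w$ and $sw<w$ there is a unique $hB$ with $\mathrm{inv}(gB,hB)=s$ and $\mathrm{inv}(hB,\g\d(g)B)=sw$; since $\mathrm{inv}(\g\d(g)B,\g\d(h)B)=\d(s)$, in case (1) the position $\mathrm{inv}(hB,\g\d(h)B)=sw\d(s)$ is forced (lengths add), giving the well-defined map, while in case (2) one partitions $\CB_{w,\g,\d}$ according to $\mathrm{inv}(hB,\g\d(h)B)\in\{sw,\,sw\d(s)\}$ and computes the fibres of $gB\mapsto hB$ to be $\BA^1$ minus a point, resp.\ $\BA^1$.

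Your case (2) correspondence does not repair this: in $\tilde\CB$ the diagonal locus $g_1B=g_2B$ is just a copy of $\CB_{w,\g,\d}$ (the condition becomes $g\i\g\d(g)\in B\dot wB$), so it is not an $\BA^1$-bundle over $\CB_{sw\d(s),\g,\d}$, and no identification of $\CB_{w,\g,\d}$ with a locus of $\tilde\CB$ compatible with the claimed bundle structures is established; the condition in a working correspondence must couple $g_2$ with $\g\d(g_1)$ (the intermediate Borel of $g_1$), not $g_1$ with $\g\d(g_2)$. Finally, the isomorphism/universal-homeomorphism asymmetry in case (1) is not a matter of ``conjugating by $\s(\dot s)$'': recovering $gB$ from $hB$ requires undoing the twist, and $\d\i$ is a morphism of varieties whereas $\s\i$ (the inverse of Frobenius) is only a universal homeomorphism; this is the precise point where the two settings diverge, and it should be stated as such.
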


\begin{remark}
    It is worth pointing out that the morphism $X_w \to X_{s w \d(s)}$ in part (1) of Proposition \ref{prop:DL} involves the Frobenius morphism on $H/B$, while the morphism $\CB_{w, \g, \d} \to \CB_{s w \s(w), \g, \d}$ does not involve the Frobenius morphism. This is the reason that we only obtain the universal homeomorphisms among Deligne-Lusztig varieties, while we obtain the isomorphisms among Lusztig varieties. A similar phenomenon occurs for affine Deligne-Lusztig varieties and affine Lusztig varieties, as we will see in Proposition \ref{DLReduction1} and Proposition \ref{DLReduction}. 
\end{remark}

We now recall some combinatorial properties of Coxeter groups. Let $W$ be a Coxeter group, and let $\BS$ be the set of simple reflections of $W$. Following~\cite{GP93}, for $w, w' \in W$ and $s \in \BS$, we write $w \xrightarrow{s}_\d w'$ if $w'=s w \d(s)$ and $\ell(w') \le \ell(w)$. We write $w \to_\d w'$ if there is a sequence $w=w_0, w_1, \cdots, w_n=w'$ of elements in $W$ such that for any $k$, $w_{k-1} \xrightarrow{s}_\d w_k$ for some $s \in \BS$. We write $w \approx_\d w'$ if $w \to_\d w'$ and $w' \to_\d w$. It is easy to see that $w \approx_\d w'$ if $w \to_\d w'$ and $\ell(w)=\ell(w')$.

For any $\d$-conjugacy class $\CO$ in $W$, we denote by $\CO_{\min}$ the set of minimal length elements in $\CO$. The following result for finite Weyl groups is due to Geck and Pfeiffer~\cite{GP93}, Geck, Kim, and Pfeiffer~\cite{GKP}, and joint work of the present author and S.~Nie~\cite{HN12}. 

\begin{theorem}\label{ux}
	Let $W$ be a finite Weyl group and $\CO$ be a $\d$-conjugacy class of $W$ and $w \in \CO$. Then there exists $w' \in \CO_{\min}$ such that $w \to_\d w'$.
\end{theorem}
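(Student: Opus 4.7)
The plan is to argue by induction on $\ell(w)$. If $w \in \CO_{\min}$ there is nothing to prove, so the substantive content lies in the inductive step. The key reduction claim I would establish is the following: for any $w \in \CO \setminus \CO_{\min}$, there exist $w'' \in W$ with $w \approx_\d w''$ and a simple reflection $s \in \BS$ such that $\ell(s w'' \d(s)) < \ell(w'')$. Granting this, set $w''' := s w'' \d(s) \in \CO$; then $\ell(w''') < \ell(w)$, and by construction $w \to_\d w''$ followed by $w'' \xrightarrow{s}_\d w'''$. The inductive hypothesis applied to $w'''$ produces $w''' \to_\d w_0$ for some $w_0 \in \CO_{\min}$, and concatenation yields $w \to_\d w_0$ as desired.

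The first step toward the reduction claim would be to reduce to the $\d$-elliptic situation via a support argument. Setting $J := \supp_\d(w)$ and letting $W_J$ denote the corresponding $\d$-stable standard parabolic subgroup, one checks that every element of $\CO$ lies in $W_J$, that the length function on $\CO$ is the same whether computed in $W$ or in $W_J$, and that the relations $\xrightarrow{s}_\d$ for $s \in J$ agree in both ambient groups. Thus, if $J \subsetneq \BS$, one may replace $(W,\d)$ by $(W_J, \d|_{W_J})$ and induct on the rank of the Coxeter system.

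The remaining case is the $\d$-elliptic one, where $\supp_\d(w) = \BS$. Here a uniform treatment is not known; one proceeds by case-by-case analysis over the irreducible finite Coxeter types. For types $A$ and $B/C$, a direct combinatorial analysis via cycle types (respectively signed cycle types) produces the required conjugate $w''$ and the length-decreasing shift $s$, as in Geck--Pfeiffer~\cite{GP93}. The twisted type $D$ case is handled by Geck--Kim--Pfeiffer~\cite{GKP}, while the exceptional types are dealt with either by explicit computer-assisted verification or by the more conceptual methods of He--Nie~\cite{HN12} based on minimal-length representatives.

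The principal obstacle is precisely this elliptic step. After the support reduction is exhausted one is inside an elliptic $\d$-twisted conjugacy class of an irreducible finite Weyl group, with no smaller proper $\d$-stable parabolic to descend into, and one must still exhibit an $\approx_\d$-chain terminating at an element from which a length-decreasing cyclic shift is available. This is sensitive to the fine combinatorics of each type, and the type-by-type verification in the references above is what ultimately makes the theorem go through.
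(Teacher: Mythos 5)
The paper itself does not prove Theorem~\ref{ux}; it quotes it from Geck--Pfeiffer~\cite{GP93}, Geck--Kim--Pfeiffer~\cite{GKP} and He--Nie~\cite{HN12}. So your decision to delegate the hard ($\d$-elliptic) case to exactly these references is consistent with the paper's treatment, and your outer induction on $\ell(w)$ via the reduction claim (every non-minimal $w$ is $\approx_\d$-equivalent to some $w''$ admitting a length-decreasing shift $s$) is indeed the right statement to aim for.

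However, the scaffolding you add around the citation has a genuine flaw, located in the support reduction. The assertion that every element of $\CO$ lies in $W_J$ with $J=\supp_\d(w)$ is false: already for $W=S_3$, $\d=\mathrm{id}$, $w=s_1$ one has $J=\{s_1\}$ while $s_2s_1s_2\in\CO\setminus W_J$. What is true is only that $w$ itself lies in $W_J$, that $\ell_{W_J}$ is the restriction of $\ell_W$, and that the moves $\xrightarrow{s}_\d$ for $s\in J$ are the same in both groups. Consequently the rank induction does not close: applying it inside $W_J$ either produces a length-decreasing move (fine), or it tells you that $w$ is of minimal length in its $\d$-class \emph{inside $W_J$}, and you are then still obliged to show that $w$ is of minimal length in the full class $\CO$ (equivalently, that $\min_{\CO}\ell$ is attained in the $W_J$-class of $w$), or else to exhibit a further length-decreasing chain in $W$. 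This is not a routine check; in \cite{GP93,GKP} it is absorbed into the same type-by-type analysis as the elliptic case, and the case-free argument of \cite{HN12} proceeds by a different geometric induction rather than by this naive support descent. So as a pointer to the literature your proposal matches what the paper does, but as a self-contained outline the reduction to the $\d$-elliptic case is where it breaks.
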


We now prove Proposition~\ref{le} by induction on $\ell(w)$. 
Suppose that $w$ is not of minimal length in its $\d$-twisted conjugacy class. By Theorem~\ref{ux}, there exists $w' \in W_H$ with $w \approx_\d w'$ and $s \in \BS_H$ such that $s w' \d(s)<w'$. By Proposition~\ref{prop:DL}, we have $$\dim \CB_{w, \g, \d}=\dim \CB_{w', \g, \d}=\max\{\CB_{s w', \g \d}, \CB_{s w' \d(s), \g, \d}\}+1.$$

Note that $\ell(s w'), \ell(s w' \d(s))<\ell(w)$. By the inductive hypothesis, $\dim \CB_{s w', \g, \d} \le \dim \CB_{\g, \d}+\ell(s w')$ and
$\dim \CB_{s w' \d(s), \g, \d} \le \dim \CB_{\g, \d}+\ell(s w' \d(s))$. So $$\dim \CB_{w, \g, \d} \le (\dim \CB_{\g, \d}+\ell(w)-1)+1=\dim \CB_{\g, \d}+\ell(w).$$ 

Hence the statement holds for $w$. The proof of Proposition~\ref{le} is finished. 

\subsection{Special case for regular elements}\label{sec:regg} 
There is one special case in which the dimensions of Lusztig varieties and Deligne--Lusztig varieties coincide. 

By definition, an element $\g \in H$ is called {\it $\d$-regular} if the $\d$-twisted conjugacy class of $\g$ is of maximal possible dimension. By a result of Springer~\cite[Proposition 5.1 and Lemma 5.3]{EG}\footnote{\cite{EG} only considers the case where $\d=\mathrm{id}$, but the same proof applies to arbitrary $\d$.}, $\dim \CB_{w, \g, \d}=\ell(w)$ for any $\d$-regular element $\g \in H$. In particular, $\dim \CB_{\g, \d}=0$ and hence 
\begin{enumerate}
\item[(a)] {\it For any $w \in W_H$ and $\d$-regular element $\g$ of $H$, we have $$\dim \CB_{w, \g, \d}=\dim X_w=\dim X_w+\dim \CB_{\g, \d}.$$}
\end{enumerate}

\section{Comparison of  dimensions}

\subsection{Deligne--Lusztig reduction}
We recall the ``reduction'' \`a la Deligne and Lusztig for the affine Deligne--Lusztig varieties. Recall that the group $\bG'$ is residually split and the induced action of the Frobenius endomorphism on the Iwahori--Weyl group is trivial. 

\begin{proposition}\label{DLReduction1}\cite[Proposition 3.3.1]{HZZ}
		Let $w\in \tW'$, $s\in \tilde \BS'$, and $b \in \breve G'$. If $\operatorname{char}(F') >0$, then the following two statements hold:
		\begin{enumerate}
			\item	If $\ell(s w s) =\ell(w)$, then there exists a $J_b$-equivariant morphism $X_w(b) \to X_{s w s}(b)$ that is a universal homeomorphism.
			\item
			If $\ell(s w s) =\ell(w)-2$, then $X_w(b)=X_1 \sqcup X_2$, where $X_1$ is a $J_b$-stable open subscheme $X$ of $X_w(b)$ and $X_2$ is its closed complement satisfying the following conditions:
			\begin{itemize}
				\item $X_1$ is $J_b$-equivariant universally homeomorphic to a Zariski-locally trivial $\bG_m$-bundle over $X_{s w}(b)$;
		        \item $X_2$ is $J_b$-equivariant universally homeomorphic to a Zariski-locally trivial $\BA^1$-bundle over $X_{s w s}(b)$.
			\end{itemize} 
		\end{enumerate}
		If $\operatorname{char}(F') =0$, then the above two statements still hold, but with $\BA^1$ and $\bG_m$ replaced by the perfections $\BA^{1, \mathrm{perf}}$ and $\bG^{\mathrm{perf}}_m$, respectively. 
\end{proposition}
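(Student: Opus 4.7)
The plan is to follow the classical Deligne--Lusztig reduction, adapted from the finite case (cf.\ Proposition~\ref{prop:DL}) to the affine/loop-group setting. The central tool is the projection $\pi_s\colon \Fl' = \breve G'/\CI' \to \breve G'/\CP_s$, where $\CP_s = \CI' \sqcup \CI' \dot s \CI'$ is the parahoric attached to $s$; this is a Zariski-locally trivial $\BP^1$-bundle in equal characteristic, and a $(\BP^1)^{\mathrm{perf}}$-bundle in mixed characteristic (using the perfect-scheme framework of Zhu and Bhatt--Scholze). First I would rewrite each affine Deligne--Lusztig variety as the preimage of the Frobenius-twisted graph under $g \mapsto (g\CI', b\s(g)\CI')$, so that Case (1) and Case (2) reduce to a fiber-by-fiber analysis of $\pi_s$ restricted to $X_w(b)$, exploiting the fact that $\s$ acts trivially on $\tW'$ by residual splitness.

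Next I would carry out the Iwahori double-coset calculus. For any $w \in \tW'$ and $s \in \tilde \BS'$, one has $\CI'\dot s \CI'\dot w\CI' = \CI'\dot{sw}\CI'$ when $sw > w$, and $\CI'\dot s\CI'\dot w\CI' = \CI'\dot w\CI' \sqcup \CI'\dot{sw}\CI'$ when $sw < w$, with an analogous statement for right multiplication. Using these, $\CP_s \dot w \CP_s$ decomposes into at most four Iwahori double cosets, labelled by $w, sw, ws, sws$. Pulling this through $\pi_s$, each $\BP^1$-fiber of $\pi_s|_{X_w(b)}$ is cut out by an equation of the form $h^{-1} x \s(h) \in \CI'\dot w\CI'$ for $h \in \CP_s/\CI' \cong \BP^1$ and $x$ a chosen representative of $g^{-1}b\s(g) \in \CI'\dot w\CI'$. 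A Lang--Steinberg-type argument on the reductive quotient $\CP_s/\CI'_{\mathrm{pro-uni}}$ identifies the solution locus.

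In Case (1), where $\ell(sws) = \ell(w)$, each fiber of $\pi_s$ meeting $X_w(b)$ contains exactly one point in $X_w(b)$ and exactly one point in $X_{sws}(b)$, and by sending the first to the second one obtains a bijection on points that is readily upgraded to a $J_b$-equivariant universal homeomorphism $X_w(b) \to X_{sws}(b)$. In Case (2), where $\ell(sws) = \ell(w) - 2$, the fiber analysis produces two strata: over each point of $X_{sw}(b)$ one finds an $\bG_m$ worth of preimages landing in $X_w(b)$ (an $\BA^1$ with the fixed point of Frobenius removed), and over each point of $X_{sws}(b)$ one finds an $\BA^1$ worth of preimages. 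These assemble into the required $J_b$-equivariant $\bG_m$-bundle $X_1 \to X_{sw}(b)$ (open) and $\BA^1$-bundle $X_2 \to X_{sws}(b)$ (closed complement); Zariski-local triviality comes from the local triviality of $\pi_s$.

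The hardest step will be the explicit fiber computation justifying the $\BA^1$ and $\bG_m$ structures: one must verify, via an honest calculation in $\CP_s$ (or its perfection), that the map $h \mapsto h^{-1}\dot w \s(h)$, restricted to a $\BP^1$-fiber, has the claimed algebraic structure, rather than merely the right cardinality of points. Zariski-local triviality of the resulting bundles then follows from that of $\pi_s$ and a standard descent argument. In mixed characteristic one repeats the same steps in the category of perfect schemes, obtaining the perfections of $\BA^1$ and $\bG_m$; $J_b$-equivariance is automatic at every step because the construction $g \mapsto g^{-1} b \s(g)$ is equivariant under left multiplication by $J_b$.
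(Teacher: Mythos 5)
The paper itself gives no proof of this proposition: it is quoted directly from \cite{HZZ}, Proposition 3.3.1, so the only comparison available is with the argument of the cited source. Your sketch is exactly that standard Deligne--Lusztig reduction argument (projection $\Fl' \to \breve G'/\CP_s$, the four-coset decomposition of $\CP_s\dot w\CP_s$, and the rank-one fiber analysis in the reductive quotient of $\CP_s$), and your fiber counts are the correct ones: in a fiber whose $\CP_s$-relative position has minimal representative $y$ with $\ell(sys)=\ell(y)+2$, one sees either one point in position $y$ plus an $\BA^1$ in position $sys$, or one point in each of positions $sy$, $ys$ plus a $\bG_m$ in position $sys$, which yields both cases of the proposition, including which piece is open. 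Two small caveats: in case (1) your claim that each relevant fiber contains exactly one point of $X_w(b)$ breaks down in the degenerate situation $sws=w$ (where the statement is trivially satisfied by the identity map), and the step ``bijection on points, readily upgraded to a $J_b$-equivariant universal homeomorphism'' conceals the genuine scheme-theoretic construction of the morphism --- in the loop-group setting this map can be purely inseparable, which is precisely why the conclusion is ``universal homeomorphism'' rather than ``isomorphism'' as in \cite{DL}, so that upgrade (together with Zariski-local triviality of the $\BA^1$- and $\bG_m$-bundles) is where the remaining real work lies, as in \cite{He14} and \cite{HZZ}.
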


Similarly, we have the following.

\begin{proposition}\label{DLReduction}
		Let $w\in \tW$, $s \in \tilde \BS$, and $\g \in \breve G^{\rs}$. Then the following two statements hold:
		\begin{enumerate}
			\item	If $\ell(s w s) =\ell(w)$, then there exists a $Z_{\breve G}(\g)$-equivariant isomorphism $Y_w(\g) \to Y_{s w s}(\g)$.
			\item
			If $\ell(s w s) =\ell(w)-2$, then $Y_w(\g)=Y_1 \sqcup Y_2$, where $Y_1$ is a $Z_{\breve G}(\g)$-stable open subscheme $X$ of $Y_w(\g)$ and $Y_2$ is its closed complement satisfying the following conditions:
			\begin{itemize}
				\item $Y_1$ is $Z_{\breve G}(\g)$-equivariant isomorphic to a Zariski-locally trivial $\bG_m$-bundle over $Y_{s w}(\g)$;
		        \item $Y_2$ is $Z_{\breve G}(\g)$-equivariant isomorphic to a Zariski-locally trivial $\BA^1$-bundle over $Y_{s w s}(\g)$.
			\end{itemize} 
		\end{enumerate}

  	If $\operatorname{char}(L) =0$, then the above two statements still hold, but with $\BA^1$ and $\bG_m$ replaced by the perfections $\BA^{1, \mathrm{perf}}$ and $\bG^{\mathrm{perf}}_m$, respectively. 
\end{proposition}

\subsection{Minimal length elements}
For any conjugacy class $C$ in $\tW$, we denote by $C_{\min}$ the set of minimal length elements in $C$. For $w, w' \in \tW$ and $s \in \tS$, we write $w \xrightarrow{s} w'$ if $w'=s w s$ and $\ell(w') \le \ell(w)$. We write $w \to w'$ if there is a sequence $w=w_0, w_1, \dots, w_n=w'$ of elements in $\tW$ such that for any $k$, $w_{k-1} \xrightarrow{s} w_k$ for some $s \in \tS$. We write $w \approx w'$ if $w \to w'$ and $w' \to w$. It is easy to see that $w \approx w'$ if $w \to w'$ and $\ell(w)=\ell(w')$. The following generalization of Theorem~\ref{ux} to the Iwahori--Weyl groups was established in~\cite[Theorem 2.10]{HN14}. 

\begin{theorem}\label{thm:min}
	Let $C$ be a conjugacy class of $\tW$ and $w \in C$. Then there exists $w' \in C_{\min}$ such that $w \to w'$.
\end{theorem}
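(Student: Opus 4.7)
The plan is to prove the statement by induction on $\ell(w)$. The base case $\ell(w)=0$ is immediate, since then $w\in\Omega$ and every length-zero element is already of minimal length in its conjugacy class. For the inductive step, assume $w\notin C_{\min}$. It suffices to produce some $w''\in C$ with $w\to w''$ and $\ell(w'')<\ell(w)$: applying the inductive hypothesis to $w''$ then yields some $w'\in C_{\min}$ with $w''\to w'$, and concatenating gives $w\to w'$.

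To produce such a length-decreasing $w''$, I would split into two cases based on the Newton point $\nu_w$ (which is a conjugation invariant) and the associated semistandard Levi $\bM_{\nu_w}$ of Section~\ref{sec:power}. In Case~1, where $w$ fails to be a $\bP_{\nu_w}$-alcove element, the failure of the inclusion $U_\alpha\cap{}^{\dot w}\CI\subset U_\alpha\cap\CI$ for some relative root $\alpha$ of $\bN_{\nu_w}$ provides an affine root inversion of $w$ relative to the base alcove that is "outside" the Levi direction; I would argue this forces the existence of a simple reflection $s\in\tilde\BS$ with $\ell(sws)<\ell(w)$, giving the required step directly. In Case~2, where $w$ already is a $\bP_{\nu_w}$-alcove element, $w$ lies in $\tW_{\nu_w}$ and $\nu_w$ is central in the root system of $\bM_{\nu_w}$, so by Section~\ref{sec:power} we may write $w=u\tau$ length-additively in $\tW_{\nu_w}$, with $\tau$ of length zero and $u$ lying in the finite Weyl group $W_{\CP}$ of an $\Ad(\tau)$-stable standard parahoric subgroup $\CP$ of $\breve M_{\nu_w}$. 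The conjugation problem for $w$ in $\tW_{\nu_w}$ then reduces to the $\Ad(\tau)$-twisted conjugation problem for $u$ in the finite Coxeter group $W_{\CP}$. Applying Theorem~\ref{ux} to $u$ with $\delta=\Ad(\tau)$ yields a sequence $u\to_\tau u'$ strictly reducing $\ell_{\CP}(u)$; lifting this sequence back to $\tW$ yields $w\to w''$ with $\ell(w'')<\ell(w)$.

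The main obstacle will be Case~1: extracting from the mere failure of the $\bP_{\nu_w}$-alcove condition a specific simple reflection $s\in\tilde\BS$ with $\ell(sws)<\ell(w)$ (not merely $sw<w$ or $ws<w$). This should follow from analyzing the set of affine hyperplanes separating the alcove $\mathfrak a$ from $\dot w\cdot\mathfrak a$ together with the sign pairing $\langle\alpha,\nu_w\rangle$. A secondary subtlety is that the length function on $\tW_{\nu_w}$ is not the restriction of the length function on $\tW$; however, by~\cite[Proposition 4.5]{HN15} (invoked in Section~\ref{sec:power}), $w$ is minimal length in its conjugacy class in $\tW_{\nu_w}$ whenever it is minimal in $\tW$, which is enough to translate length decreases in $W_{\CP}$ back to length decreases in $\tW$ via the length-additivity of the decomposition $w=u\tau$. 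Once both steps are in place, the two cases together supply the required $w''$ in every situation, closing the induction.
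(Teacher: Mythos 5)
The paper does not prove this statement itself; it is quoted from~\cite[Theorem 2.10]{HN14}, whose proof is a geometric argument (one studies the affine subspace $V_w=\{v; w\cdot v=v+\nu_w\}$ and walks the base alcove towards it, arriving at the normal form of Theorem~\ref{thm:ux}, i.e.\ $w\to ux$ with $K\subset\tilde\BS$ spherical, $x\in{}^K\tW^K$ straight, $\Ad(x)(K)=K$, $u\in W_K$, and only then invokes the finite twisted case). Your two-case scheme does not substitute for that argument, and both cases have genuine gaps. In Case~2 the decisive step is false as stated: being a $\bP_{\nu_w}$-alcove element does not give a decomposition $w=u\tau$ with $\tau$ of length zero in $\tW_{\nu_w}$ and $u$ in the finite Weyl group of a parahoric of $\breve M_{\nu_w}$. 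Section~\ref{sec:power} obtains that decomposition from~\cite[Proposition 4.5]{HN15} and~\cite[Corollary 2.8]{HN14} \emph{under the hypothesis that $w$ is of minimal length in its conjugacy class} --- precisely the hypothesis you cannot assume in the inductive step, so the appeal is circular. Concretely, in type $\tilde A_1$ the element $w=s_0s_1s_0$ has $\nu_w=0$, hence is vacuously a $\bP_{\nu_w}$-alcove element, but it lies in no proper spherical $W_K$ and its length-zero part is forced to be trivial, so no such decomposition exists, while $w$ is not of minimal length in its class. Moreover, even where the decomposition is available, your lifting step is unjustified: the Coxeter generators of $W_{\CP}$ (coming from $\CI_{\bM_{\nu_w}}$) are in general not elements of $\tilde\BS$, and the length function of $\tW_{\nu_w}$ is not the restriction of that of $\tW$, so a sequence of $\Ad(\tau)$-twisted conjugations inside $W_{\CP}$ does not translate into a $\to$-sequence in $\tW$; \cite[Proposition 4.5]{HN15} goes in the opposite direction and only for minimal elements. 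This transfer problem is exactly why Theorem~\ref{thm:ux} insists on $K\subset\tilde\BS$ and on $x$ straight with $\Ad(x)(K)=K$, so that lengths add in $\tW$ and conjugation by elements of $K$ are legitimate moves. (Note also that Theorem~\ref{ux} only produces descent to a minimal element, not a strict decrease, so even granting everything else your inductive step can terminate without producing the required $w''$.)

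Case~1 is likewise not established: from the failure of the $\bP_{\nu_w}$-alcove condition you claim the existence of a single $s\in\tilde\BS$ with $\ell(sws)<\ell(w)$, but you give no argument, and this is stronger than anything known --- the whole reason the relation $\to$ permits length-preserving steps is that, both in finite and in affine Weyl groups, a non-minimal element need not admit any one-step strict decrease, and one must first perform cyclic shifts within the same length. Since all elements with central Newton point (e.g.\ every element of a translation-free class) fall vacuously into Case~2, your dichotomy pushes the hardest elements into the case whose key tool is unavailable, while the case that would need a new idea (Case~1) is left as a hope. The missing content is exactly the geometric reduction of~\cite{HN14} described above; without it, the induction does not close.
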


We have the following results on nice representatives of minimal length elements, which will be used in \S\ref{sec:base1}.

\begin{theorem}\label{thm:ux}\cite[Propositions 2.4 and 2.7]{HN14}
Let $w \in \tW$. Then there exists a straight element $x \in \tW$, $K \subset \tilde \BS$ with $W_K$ finite, $x \in {}^K \tW^K$, and $\Ad(x)(K)=K$ and $u \in W_K$ such that $u x$ is a minimal length element in the conjugacy class of $w$ and $w \to u x$. 
\end{theorem}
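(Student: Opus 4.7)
The plan is to first reduce to the case where $w$ is already a minimal length element in its conjugacy class via Theorem~\ref{thm:min}, and then to extract the decomposition $w = ux$ by passing to the Iwahori--Weyl group of the Levi $\bM_{\nu_w}$, in which $\nu_w$ becomes central.

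By Theorem~\ref{thm:min}, there exists $w_0 \in C_{\min}$ with $w \to w_0$, where $C$ is the conjugacy class of $w$. Since $\to$ is transitive, it suffices to produce a decomposition $w_0 = ux$ satisfying the required conditions; concatenating with $w \to w_0$ then yields $w \to ux$. So I would replace $w$ by $w_0$ and assume $w \in C_{\min}$ from the outset.

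The structural results recalled in Section~\ref{sec:power} (built on \cite[Theorem~1.7]{Nie15} and \cite[Proposition~4.5]{HN15}) then imply that $w$ is a $\bP_{\nu_w}$-alcove element belonging to $\tW_{\nu_w}$, and that $w$ is of minimal length in its conjugacy class inside $\tW_{\nu_w}$ with respect to the length function determined by $\CI_{\bM_{\nu_w}}$. Since $\nu_w$ is central in the root system of $\bM_{\nu_w}$, \cite[Corollary~2.8]{HN14} produces a length-zero element $\tau \in \tW_{\nu_w}$ and an $\Ad(\dot\tau)$-stable standard parahoric $\CP$ of $\breve M_{\nu_w}$ such that $w = u\tau$ for some $u$ in the (finite) Weyl group $W_\CP$. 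Take $x = \tau$ and let $K \subset \tilde\BS$ be the type of $\CP$. Then the finiteness of $W_K = W_\CP$, the relation $\Ad(x)(K) = K$, and the membership $u \in W_K$ are all immediate from the construction.

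The remaining verifications are that (a) $x$ is straight in $\tW$ (not merely length zero relative to the Levi length function on $\tW_{\nu_w}$), and (b) $x \in {}^K \tW^K$. For (a), since $\tau^n$ is again length zero in $\tW_{\nu_w}$ and has translation part $n\nu_w$, a direct count of the affine hyperplanes separating the base alcove $\mathfrak a$ from $x^n \mathfrak a$ in the apartment $\CA$ gives $\ell_{\tW}(x^n) = n \ell_{\tW}(x)$. For (b), the identity $\Ad(x)(K) = K$, combined with the fact that $\nu_x = \nu_w$ is orthogonal to the roots spanning $W_K$, forces $\ell(sx) = \ell(xs) = \ell(x) + 1$ for every $s \in K$, which identifies $x$ with the minimal-length representative of its double coset $W_K x W_K$. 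The main delicate point I expect will be step (a): reconciling the two different length functions on $\tW$ and $\tW_{\nu_w}$ to upgrade ``length zero relative to $\bM_{\nu_w}$'' to global straightness in $\tW$. Once this is secured, the remainder of the argument is a bookkeeping exercise in the combinatorics of parahoric subgroups inherited from the Levi decomposition.
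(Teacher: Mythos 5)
There is a genuine problem of circularity, together with a concrete technical gap. First, note that the paper does not prove Theorem~\ref{thm:ux} at all: it is quoted from \cite[Propositions 2.4 and 2.7]{HN14}, so the relevant comparison is with the argument in \cite{HN14}. There, the decomposition $w \to ux$ is obtained by a direct alcove-geometric construction (working with a point of $V_w$ in the closure of a suitable alcove and conjugating by length-non-increasing moves), and it is \emph{upstream} of the other results you invoke: Theorem~\ref{thm:min} (= \cite[Theorem 2.10]{HN14}) is deduced from Propositions 2.4 and 2.7 by applying the finite Coxeter group results to $u \in W_K$ with the twist $\Ad(x)$, and \cite[Corollary 2.8]{HN14} is likewise a consequence of those propositions. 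So reducing to a minimal length element via Theorem~\ref{thm:min} and then quoting Corollary 2.8 is not an independent proof of the statement; it reproves it from results that already contain it.

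Even taken as a formal derivation from the quoted inputs, the construction does not yield an element of the required form. The parahoric $\CP$ produced by \cite[Corollary 2.8]{HN14} is standard in $\breve M_{\nu_w}$ with respect to $\CI_{\bM_{\nu_w}}$, so its type is a set of simple reflections of $\tW_{\nu_w}$; these are reflections of $\tW$ but in general are \emph{not} elements of $\tilde \BS$ (the length function and simple system of $\tW_{\nu_w}$ are not inherited from $\tW$, as the paper itself emphasizes in \S\ref{sec:power}). Hence ``let $K \subset \tilde\BS$ be the type of $\CP$'' is not available, $W_\CP$ is not a standard finite parabolic $W_K$ of $\tW$, and the claims that $K \subset \tilde\BS$, $u \in W_K$ and $\Ad(x)(K)=K$ are ``immediate'' fail as stated. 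Repairing this forces a further conjugation to bring the pair $(W_\CP, \tau)$ into standard position, and one must check that this conjugation can be realized by $\to$-moves (i.e.\ without increasing length) -- which is precisely the content of the alcove argument in \cite{HN14} that your plan was meant to bypass. In addition, your step (a) (straightness of $x=\tau$ in $\tW$, i.e.\ passing from $\ell_{\tW_{\nu_w}}(\tau)=0$ to $\ell_{\tW}(\tau^n)=n\,\ell_{\tW}(\tau)$) is only sketched, and your argument for (b) is insufficient: $\Ad(x)(K)=K$ together with $\langle \a, \nu_x\rangle=0$ for the roots of $W_K$ does not by itself force $\ell(sx)>\ell(x)$ for all $s \in K$, so $x \in {}^K\tW^K$ needs a genuine argument (in the intended proof $x$ is \emph{defined} as the minimal double coset representative, which is why that condition holds there by construction).
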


\begin{remark}
    In \cite{HN14}, the results were stated in a more geometric way. We provide more details here. The element $w$ here corresponds to $\tilde w_A$ and the element $u x$ corresponds to $\tilde w_{A'}$ in \cite[Proposition 2.4]{HN14}. The decomposition $\tilde w_{A'}=u x$ follows from \cite[Proposition 2.7]{HN14}.
\end{remark}

We also point out that, in this case, $u$ is a minimal length element in the $\Ad(x)$-conjugacy class of $W_K$.

\begin{theorem}\label{thm:str-cyc}\cite[Theorem 3.8]{HN14}
Let $C$ be a straight conjugacy class of $\tW$. Then, for any straight elements $w$ and $w'$ in $C$, we have $w \approx w'$. 
\end{theorem}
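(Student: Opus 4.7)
Since every straight element attains the minimal length in its conjugacy class (a standard fact: $\ell(v) \ge \langle 2\rho, \bar\nu_v \rangle$ with equality precisely when $v$ is straight), we have $\ell(w) = \ell(w')$. Because each elementary move $\xrightarrow{s}$ is length non-increasing, once I produce any chain $w \to w'$ the equality of lengths forces $w \approx w'$. My strategy is therefore to construct such a chain by first bringing $w$ and $w'$ into a common normal form and then comparing these normal forms inside an appropriate Levi subgroup.

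Applying Theorem~\ref{thm:ux} to $w$ and $w'$ yields minimal length elements $u x$ and $u' x'$ in $C$ with $w \to u x$ and $w' \to u' x'$, where $x, x'$ are straight, $K, K' \subset \tilde\BS$ are spherical and $\Ad(x)$- (resp.\ $\Ad(x')$-)stable, $x \in {}^K \tW^K$, $x' \in {}^{K'} \tW^{K'}$, and $u \in W_K$, $u' \in W_{K'}$ are minimal length in their respective twisted conjugacy classes. Length equality upgrades these to $w \approx u x$ and $w' \approx u' x'$. To show $u = u' = 1$, I would expand
\[
(u x)^n = \bigl(u \cdot \Ad(x)(u) \cdots \Ad(x^{n-1})(u)\bigr) \cdot x^n.
\]
The left factor lies in the finite group $W_K \subset W_{\af}$, so its length is bounded uniformly in $n$, while $\ell(x^n) = n\ell(x)$ by straightness of $x$. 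Comparing with $\ell((u x)^n) = n(\ell(u) + \ell(x))$, which holds since $u x \approx w$ is straight, and letting $n \to \infty$ forces $\ell(u) = 0$; as $W_{\af}$ has no nontrivial length-zero element, $u = 1$. Symmetrically $u' = 1$, so up to $\approx$ we may assume $w$ and $w'$ themselves are normal-form straight elements $x, x'$.

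The principal remaining task, and the main obstacle, is to show $x \approx x'$. The approach is to work inside the Iwahori--Weyl group $\tW_{M_{\bar\nu_x}}$ of the Levi attached to the dominant Newton point. After further length-preserving $\approx$-moves (conjugating by finite simple reflections to move $\nu$ into the dominant chamber), one may assume $\nu_x = \nu_{x'} = \bar\nu_x$; by the $\bP_{\bar\nu_x}$-alcove property of minimal length elements (cf.\ Section~\ref{sec:power}), both $x$ and $x'$ then lie in $\tW_{M_{\bar\nu_x}}$. Since $\bar\nu_x$ is central in the root system of $M_{\bar\nu_x}$, the elements $x$ and $x'$ have length zero in the Levi length function, i.e., they lie in $\Omega_{M_{\bar\nu_x}}$; any two length-zero elements of $\tW_{M_{\bar\nu_x}}$ in a common $\tW$-conjugacy class can be connected by a sequence of Levi simple-reflection conjugations. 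The hardest point is verifying that each such Levi move lifts to an ambient $\approx$-move in $\tW$, i.e., that $\ell(s x s) \le \ell(x)$ in the ambient length function whenever $s$ is a Levi simple reflection and $x$ is a length-zero element of $\tW_{M_{\bar\nu_x}}$. This should come down to a case analysis of the affine roots crossing the relevant alcove walls, exploiting the $\bP_{\bar\nu_x}$-alcove inclusion $U_\a \cap {}^{\dot x}\CI \subset U_\a \cap \CI$ for $\a$ in $\bN_{\bar\nu_x}$, and is where the bulk of the technical work will lie.
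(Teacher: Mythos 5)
The paper does not prove this statement at all: it is quoted from \cite{HN14}, Theorem 3.8, so your proposal has to stand on its own as a complete argument, and it does not. The first half is correct but gains essentially nothing. The facts that straight elements are precisely the minimal length elements of their class (via $\ell(v)\ge \<\bar\nu_v,2\rho\>$), that $w\to w'$ together with $\ell(w)=\ell(w')$ upgrades to $w\approx w'$, and your power-length computation $(ux)^n=\bigl(u\,\Ad(x)(u)\cdots \Ad(x^{n-1})(u)\bigr)x^n$ forcing $\ell(u)=0$, hence $u=1$, in the normal form of Theorem~\ref{thm:ux}, are all fine. But after this reduction you are still facing two essentially arbitrary straight elements $x,x'$ of $C$, i.e.\ the original statement.

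The remaining step, which you yourself flag as the principal task, is where the proposal breaks down, and not merely because it is deferred. First, the assertion that ``after further length-preserving $\approx$-moves'' one may assume $\nu_x=\nu_{x'}=\bar\nu_x$ is unjustified: there is no argument that conjugating a straight element by a finite simple reflection so as to move its Newton point toward the dominant chamber is length non-increasing, and establishing some such length control is essentially the content of the theorem itself. Second, the proposed mechanism inside the Levi is wrong as stated. If $\nu_x$ is regular, then $\bM_{\nu_x}=\bT$ and $\tW_{\nu_x}$ has no simple reflections at all, yet distinct $\tW$-conjugate straight elements (e.g.\ the translations $t^\mu$ and $t^{y\mu}$ for $y\in W_0$, all straight and all in one class) must still be connected; more generally, two distinct length-zero elements of $\tW_{\nu_x}$ are never conjugate inside $\tW_{\nu_x}$ (conjugation acts trivially on the quotient of $\tW_{\nu_x}$ by its affine Weyl group), and a length-non-increasing conjugation of a length-zero element by a Levi simple reflection necessarily fixes that element, so such moves cannot connect anything. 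The connecting conjugations must come from outside the Levi, and controlling ambient length along them --- which is also the issue behind your own remark that Levi simple reflections are not elements of $\tilde\BS$ and hence not elementary moves in $\tW$ --- is exactly what the cited proof in \cite{HN14} supplies by a genuinely more involved argument. As it stands, the proposal does not establish the theorem.
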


\subsection{Affine Springer fiber associated with $\g$}\label{sec:affineS} 
Let $\g \in \breve G^{\rs}$ and $C$ be the straight conjugacy class of $\tW$ with $\g \in \{C\}$. Let $w, w'$ be minimal length elements of $C$. By Theorem~\ref{thm:str-cyc}, we have $w \approx w'$, and thus, by Proposition~\ref{DLReduction}, $Y_w(\g) \cong Y_{w'}(\g)$. By abusing notation, we set $Y_{\g}=Y_w(\g)$ for some minimal length element $w$ of $C$. The isomorphism class of $Y_{\g}$ is independent of the choice of such a minimal length representative. We call it the {\it affine Springer fiber associated with} $\{\g\}$. 

Let $w \in C$ be a minimal length element. By Theorem~\ref{thm:HN1}, $\{\g\} \cap \CI_{\bM_{\nu_w}} \dot w \CI_{\bM_{\nu_w}} \neq \emptyset$. We further assume that $\g \in \CI_{\bM_{\nu_w}} \dot w \CI_{\bM_{\nu_w}}$. By~\cite[Proposition 3.2]{HN14}, $w$ is a length-zero element in $\tW_{\nu_w}$. Hence $Y_\g \cong Y_w(\g) \cong Y^{\bM_{\nu_w}}_w(\g)$ is the affine Springer fiber of $\g$ in $\breve M_{\nu_w}$. In particular, $Y_\g$ is always non-empty. By Theorem~\ref{thm: aff}, we have 
\begin{enumerate}
\item[(a)] {\it Let $\g \in \breve G^{\rs}$. Then $Y_\g$ is equi-dimensional.}
\end{enumerate}

We now state the main result of this paper. 

\begin{theorem}\label{thm:le}
Suppose that $\bG'$ is associated with $\bG$. Let $\g$ be a regular semisimple element of $\breve G$ and $[b]=f_{\bG, \bG'}(\{\g\}) \in B_\s(\bG')$. Then, for any $w \in \tW$, we have 
\begin{enumerate}
    \item $Y_w(\g) \neq \emptyset$ if and only if $X_w(b) \neq \emptyset$;

    \item if $X_w(b) \neq \emptyset$, then $\dim Y_w(\g)=\dim X_w(b)+\dim Y_{\g}.$.
\end{enumerate}

In particular, $Y_w(\g)$ is either an empty set or finite-dimensional. 
\end{theorem}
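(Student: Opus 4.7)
The plan is to combine two structural reductions with a fiber analysis in a finite reductive group, arguing by induction on $\ell(w)$. The first reduction is Deligne--Lusztig: by Theorem~\ref{thm:min}, any $w$ is connected by a chain of $\to$-moves to a minimal length element of its conjugacy class, and Propositions~\ref{DLReduction} and~\ref{DLReduction1} describe the behavior of $Y_w(\g)$ and $X_w(b)$ under each move. When $\ell(sws)=\ell(w)$, both varieties are preserved (universal homeomorphism, isomorphism respectively), so their dimensions agree; when $\ell(sws)=\ell(w)-2$, both satisfy $\dim(\cdot) = \max(\dim(\text{sw-case}),\,\dim(\text{sws-case}))+1$. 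Thus the claimed equality propagates through the reduction (including the convention $\dim\emptyset=-\infty$), and we reduce to the case where $w$ is minimal length in its conjugacy class.

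With $w$ minimal length in its class, I would invoke the Hodge--Newton decomposition. Theorem~\ref{thm:HN1} gives $Y_w(\g) \neq \emptyset \Leftrightarrow \{\g\}\cap \CI_{\bM_{\nu_w}}\dot w\CI_{\bM_{\nu_w}}\neq\emptyset$, and then $Y_w(\g)\cong Y_w^{\bM_{\nu_w}}(\g)$. Theorem~\ref{thm:HN} gives the analogous statement on the Frobenius-twisted side, so $\dim X_w(b) = \dim X_w^{\bM'_{\nu_w}}(b)$ after transferring $b$ to the Levi. Applying the same reasoning to any straight minimal length representative of the straight class of $\{\g\}$ (which has the same Newton point $\nu_w$) identifies $Y_\g$ with the ordinary affine Springer fiber $\Fl_{\bM_{\nu_w}}^{\g}$. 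Writing $\bM := \bM_{\nu_w}$, this reduces the problem to proving the equality inside $\bM$, where $\nu_w$ is central.

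For the computation in $\bM$, use \S\ref{sec:power} to write $w = u\tau$ with $\tau$ length zero in $\tW_\bM$ and $u$ in the finite Weyl group $W_\CP$ of a standard $\Ad(\dot\tau)$-stable parahoric $\CP \supset \CI_\bM$. The natural projection $\pi:\Fl_\bM \to \breve M/\CP$ restricts to a map $Y_w^\bM(\g)\to Y_\tau^\bM(\g)=(\breve M/\CP)^\g$ whose fiber at $g\CP$ is (after a short direct computation) the classical Lusztig variety $\CB_{u,\bar\g_g,\d_\tau}$ in the finite flag $\bar\CP/\bar B$, where $\bar\g_g\in\bar\CP\rtimes\langle\bar\tau\rangle$ is the image of $g\i\g g$ and $\d_\tau=\Ad(\bar\tau)$. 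Parallel to this, the projection $\Fl_\bM^{\g}\to Y_\tau^\bM(\g)$ has fibers equal to the classical Springer fibers $\CB_{\bar\g_g,\d_\tau}$. The analogous fiber structure holds on the ADLV side and, for minimal length $w=u\tau$ with $[b]$ basic in $\bM'$, the known structure of ADLVs attached to minimal length elements gives $\dim X_w^{\bM'}(b)=\ell(u)$.

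To conclude, I would invoke Theorem~\ref{thm: aff}: the regular locus $Y_\tau^{\bM,\reg}(\g)$ is open and nonempty in $Y_\tau^\bM(\g)$ of full dimension $\dim Y_\g$. Over it, $\bar\g_g$ is $\d_\tau$-regular and, by \S\ref{sec:regg}(a), $\dim \CB_{u,\bar\g_g,\d_\tau}=\ell(u)=\dim X_u$, giving the lower bound $\dim Y_w^\bM(\g)\ge \dim Y_\g+\ell(u)$. For the upper bound, I would stratify $Y_\tau^\bM(\g)$ by the $\d_\tau$-conjugacy class of $\bar\g_g$; on each stratum Proposition~\ref{le} bounds the fiber of $\pi|_{Y_w^\bM(\g)}$ by $\ell(u)+\dim\CB_{\bar\g_g,\d_\tau}$. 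Summing stratum dimension plus fiber dimension and comparing with the equi-dimensional fibration $\Fl_\bM^\g\to Y_\tau^\bM(\g)$ yields $\dim Y_w^\bM(\g)\le\dim\Fl_\bM^\g+\ell(u)=\dim Y_\g+\ell(u)$, completing the proof. The main obstacle I anticipate is executing this last stratification argument cleanly: the twisted conjugacy class of $\bar\g_g$ is not constant on $Y_\tau^\bM(\g)$, so one must carefully relate the dimensions of strata in the Lusztig fibration to those in the Springer fibration, using the equi-dimensionality of $\Fl_\bM^\g$ (Theorem~\ref{thm: aff}(1)) as the key tool.
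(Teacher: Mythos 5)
Your proposal is correct and follows essentially the same route as the paper's own proof: Deligne--Lusztig reduction (Propositions~\ref{DLReduction1} and~\ref{DLReduction}) to minimal length elements, Hodge--Newton reduction via Theorem~\ref{thm:HN1} to the Levi $\bM_{\nu_w}$ with the decomposition $w=u\t$ of \S\ref{sec:power}, the fibration over the parahoric-level affine Springer fiber whose fibers are classical Lusztig varieties, Proposition~\ref{le} applied fiberwise for the upper bound, and the nonempty, full-dimensional regular locus of Theorem~\ref{thm: aff} together with \S\ref{sec:regg} for the lower bound, with $\dim X_w(b)=\ell(u)$ supplied by the known results on affine Deligne--Lusztig varieties for minimal length elements. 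The only tightenings needed are minor and available from the tools you already cite: treat separately the case $\{\g\}\cap \CI_{\bM_{\nu_w}}\dot w\CI_{\bM_{\nu_w}}=\emptyset$, where both $Y_w(\g)$ and $X_w(b)$ are empty by the matching of straight conjugacy classes; in the upper bound, stratify by the dimension of the classical Springer fiber (finitely many constructible strata) rather than by the twisted conjugacy class of $\overline{\g}_g$; and note that equi-dimensionality of $\Fl_{\bM_{\nu_w}}^{\g}$ is not in fact needed for the upper bound, entering only through Theorem~\ref{thm: aff}(2) in the lower bound.
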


The proof of Theorem~\ref{thm:le} involves two parts: the base case and the reduction procedure. The reduction procedure is similar to the proof of Proposition~\ref{le}, but the base case is more complicated. 

\subsection{Base case: upper bound}\label{sec:base1}
We consider the case where $w$ is of minimal length in its conjugacy class. Let $C$ be the straight conjugacy class of $\tW=\tW'$ associated with $\{\g\}$ and to $[b]$. 

Let $u$ and $x$ be the elements in Theorem~\ref{thm:ux} with $w \approx ux$. Let $\CP_K$ be the standard parahoric subgroup generated by $\CI$ and $\dot s$ for $s \in K$. By Proposition~\ref{DLReduction}~(1), $Y_w(\g) \cong Y_{u x}(\g)$. We have $\CI \dot u \dot x \CI \subset \CP_K \dot x \CP_{K}$. 

Let $\bar L_K=\CP_K/U_{\CP_K}$ be the reductive quotient of the parahoric subgroup $\CP_K$. Set $\d=\Ad(\dot x)$. Then $\d$ is a diagram automorphism on $\bar L_K$. Let $\bar B=\CI/U_{\CP_K}$ be a Borel subgroup of $\bar L_K$. Then $\bar B$ is stable under the action of $\d$. By Steinberg's theorem \cite{St}, any element in $\bar L_K$ is $\d$-conjugate to an element in $\bar B$. Thus, any element in $\CP_K \dot x \CP_K$ is conjugate by $\CP_K$ to an element in $\CI \dot x \CI \subset \{C'\}$, where $C'$ is the straight conjugacy class of $x$. In particular, $\CP_K \dot x \CP_K \subset \{C'\}$. 

If $C' \neq C$, then $\{C\} \cap \{C'\}=\emptyset$ and thus $\{\g\} \cap \CI \dot u \dot x \CI=\emptyset$. Hence $Y_{u x}(\g)=\emptyset$. By~\cite[Theorems 3.5 and 3.7]{He14}, $X_{u x}(b)=\emptyset$. The statement is obvious in this case. 

We now assume that $C'=C$. Let $\pi_K: \breve G/\CI \to \breve G/\CP_K$ be the projection map. Set $$Y=\{g \CP_K \in \breve G/\CP_K; g \i \g g \in \CP_K \dot x \CP_{K}\}.$$ Then, for any $u' \in W_K$, $\pi_K(Y_{u' x}(\g)) \subset Y$. 

Note that any element in $Y$ is of the form $g \CP_K$ for some $g \in \breve G$ with $g \i \g g=\g_1 \dot x$ for some $\g_1 \in \CP_K$. Let $\bar \g_1$ be the image of $\g_1$ in $\bar L_K$. Similar to the proof in \S\ref{sec:base}, $\pi_K \i(g \CP_K) \cap Y_{u' x}(\g)$ is isomorphic to the Lusztig variety $\{\bar g \bar B \in \bar L_K/\bar B; \bar g \bar \g_1 \d(\bar g) \i \in \bar B \dot u' \bar B\} \subset \bar L_K/\bar B$. Applying Proposition~\ref{le} to the fibers of the maps $Y_{u x}(\g) \to Y$ and $Y_{x}(\g) \to Y$, we obtain $$\dim Y_{ux}(\g)-\dim Y_{x}(\g) \le \ell(u).$$

By definition, $Y_\g=Y_x(\g)$. By~\cite[Theorem 4.8]{He14}, $\dim X_w(b)=\dim X_{u x}(b)=\ell(u)$. Thus, $\dim Y_w(\g)=\dim Y_{u x}(\g) \le \dim X_w(b)+\dim Y_\g$. 

\subsection{Base case: lower bound} We keep the notation as in \S\ref{sec:base1}. 

Let $\bM=\bM_{\nu_x}$ and $\CP=\breve M \cap \CP_K$. We have immersions $\Fl^{\bM} \to \Fl$ and $\breve M/\CP \to \breve G/\CP_K$. Set $$Y'=\{g \CP \in \breve M/\CP; g \i \g g \in \CP \dot x \CP\}.$$ The immersion $\breve M/\CP \to \breve G/\CP_K$ induces an immersion $Y' \to Y$. We have the following commutative diagram for any $u' \in W_K$:
\[
\xymatrix{
Y^{\bM}_{u' x}(\g) \ar[r] \ar[d] & Y_{u' x}(\g) \ar[d] \\
Y' \ar[r] & Y
}
\]

We have shown in \S\ref{sec:base1} that any element in $\CP_K \dot x \CP_K$ is conjugate by $\CP_K$ to an element in $\CI \dot x \CI$. Thus the map $Y_x(\g) \to Y$ is surjective. By Theorem~\ref{thm:HN1}, the map $Y^{\bM}_x(\g) \to Y_x(\g)$ is an isomorphism. The commutative diagram above for $u'=1$ implies that the immersion $Y' \to Y$ is an isomorphism. In particular, we may and will assume that $\g \in \CP \dot x \CP$. 

Set $\d=\Ad(\dot x)$. Then $\d$ is a group automorphism on $\breve M$ that stabilizes its Iwahori subgroup $\CI_{\bM}=\breve M \cap I$. Moreover, $\CP$ is a standard $\d$-stable parahoric subgroup of $\breve M$. Thus $Y'$ is the affine Springer fiber in the partial affine flag variety $\breve M/\CP$ associated with the element $\g \dot x \in \breve M^{\ex}$. Let $Y^{', \reg} \subset Y'$ be the regular locus defined in \S\ref{sec:reg}. By Theorems~\ref{thm: aff} and~\ref{thm:HN1}, $$\dim Y^{', \reg}=\dim Y'=\dim Y^{\bM}_x(\g)=\dim Y_x(\g)=\dim Y_\g.$$

Consider the following commutative diagram:
\[
\xymatrix{
(\pi'_K) \i (Y^{', \reg}) \cap Y^{\bM}_{u x}(\g) \ar@{^{(}->}[r] \ar[d]^{\pi'_K} & Y^{\bM}_{u x}(\g) \ar[r]^\cong \ar[d]^{\pi'_K} & Y_{u x}(\g) \ar[d]^{\pi_K} \\
Y^{', \reg} \ar@{^{(}->}[r] & Y' \ar[r]^\cong & Y
}
\]
where $\pi_K: \Fl \to \breve G/\CP_K$ and $\pi'_K: \Fl^{\bM} \to \breve M/\CP$ are the projection maps. 

For any $g \CP \in Y^{', \reg}$, the fiber \begin{align*} (\pi'_K) \i (g \CP) \cap Y^{\bM}_{u x}(\g) &=\{g g_1 \CI_{\bM} \in g \CP/\CI_{\bM}; g_1 \i (g \i \g g) g_1 \in \CI_{\bM} \dot u \dot x \CI_{\bM}\}.
\end{align*}

Let $\bar \CP$ be the reductive quotient of $\CP$ and $\bar \g_1$ be the image of $g \i \g g$ in $\bar \CP$. Since $g \CP \in Y^{', \reg}$, $\bar \g_1$ is a $\d$-regular element of $\bar \CP$. Similar to the proof in \S\ref{sec:base}, $(\pi'_K) \i (g \CP) \cap Y^{\bM}_{u x}(\g)$ is isomorphic to the Lusztig variety associated with the $\d$-regular element $\bar \g_1$ of $\bar \CP$. By \S\ref{sec:regg}, $\dim \bigl((\pi'_K) \i (g \CP) \cap Y^{\bM}_{u x}(\g)\bigr)=\ell(u)$ and hence $\dim\bigl((\pi'_K) \i (Y^{', \reg}) \cap Y^{\bM}_{u x}(\g)\bigr)=\dim Y^{', \reg}+\ell(u)$. 

By Theorem~\ref{thm:HN1}, $\dim Y_{u x}(\g)=\dim Y^{\bM}_{u x}(\g)$. We then have 
\begin{align*} \dim Y_{u x}(\g) &=\dim Y^{\bM}_{u x}(\g) \ge \dim\bigl((\pi'_K) \i (Y^{', \reg}) \cap Y^{\bM}_{u x}(\g)\bigr) \\ &=\dim Y^{', \reg}+\ell(u)=\dim X_{ux}(b)+\dim Y_\g.
\end{align*}

This finishes the proof for the base case.

\subsection{Reduction procedure}\label{red-ineq} 
Let $w \in \tW$. Suppose that $w \notin \tW_{\min}$ and that Theorem~\ref{thm:le} holds for any $w' \in \tW$ with $\ell(w')<\ell(w)$. By Theorem~\ref{ux}, there exists $w' \in \tW$ with $w \approx w'$ and $s \in \tilde \BS$ such that $s w' s<w'$.

By Propositions~\ref{DLReduction} and~\ref{DLReduction1}, we have 
\begin{gather*}
\dim X_w(b)=\dim X_{w'}(b)=\max\{\dim X_{s w'}(b), \dim X_{s w' s}(b)\}+1, \\
\dim Y_w(\g)=\dim Y_{w'}(\g)=\max\{\dim Y_{s w'}(\g), \dim Y_{s w' s}(\g)\}+1. 
\end{gather*} 
Here we use the convention that $\dim \emptyset=-\infty$ and $-\infty+1=-\infty$. Note that $\ell(s w'), \ell(s w' s)<\ell(w)$. By the inductive hypothesis, we have 
\begin{enumerate}
	\item $Y_{s w'}(\g) \neq \emptyset$ if and only if $X_{s w'}(b) \neq \emptyset$. In this case, $$\dim Y_{s w'}(\g)-\dim X_{s w'}(b)=\dim Y_\g;$$ 
	\item $Y_{s w' s}(\g) \neq \emptyset$ if and only if $X_{s w' \s}(b) \neq \emptyset$. In this case, $$\dim Y_{s w' s}(\g)-\dim X_{s w' s}(b)=\dim Y_\g.$$ 	
\end{enumerate}
Thus $Y_w(\g) \neq \emptyset$ if and only if $X_w(b) \neq \emptyset$. In this case, \begin{align*} & \dim Y_w(\g)-\dim X_{w}(b) \\ & =\max\{\dim Y_{s w'}(\g), \dim Y_{s w' s}(\g)\}-\max\{\dim X_{s w'}(b), \dim X_{s w' s}(b)\} \\ &=\dim Y_\g.\end{align*}

Hence Theorem~\ref{thm:le} holds for $w \in \tW$. 

\subsection{Explicit descriptions}
Let $\le$ be the dominance order on the set of dominant rational coweights, that is, $\l_1 \le \l_2$ if and only if $\l_2-\l_1$ is a non-negative rational linear combination of simple coroots. 

We recall the definitions of the defect and the virtual dimension. 

For $b \in \breve G'$, the {\it defect} of $b$ is defined to be $\operatorname{def}(b)=\rank_F \bG'-\rank_F \bJ_b$, where for a reductive group $\bH$ defined over $F$, $\rank_F$ is the $F$-rank of the group $\bH$. The defect may also be defined in the following way, depending only on the Iwahori--Weyl group, not the reductive group. Let $V_{w}=\{v \in V; w \cdot v=v+\nu_{w}\}$. For any straight conjugacy class $C$ of $\tW$, we define the {\it defect} of $C$ by $$\operatorname{def}(C)=\dim V-\dim V_{w} \text{ for any } w \in C.$$ By~\cite[\S 1.9]{Ko06}, $\operatorname{def}(C)=\operatorname{def}(b)$, where $[b]=[\dot w]$ for any $w \in C$.

Note that any element in $\tW$ can be written in a unique way as $x t^\l y$, where $x, y \in W_0$, $\l$ is a dominant coweight such that $t^\l y \in {}^{\BS_0} \tW$. We set $\eta(x t^\l y)=y x \in W_0$. For any $w \in \tW$ and $b \in \breve G'$, the {\it virtual dimension} is defined to be $$d_w(b)=\frac 12 \big( \ell(w) + \ell(\eta(w)) -\operatorname{def}(b)  \big)-\<\nu_{[b]}, \rho\>.$$ By~\cite[Theorem 2.30]{He-CDM}, we have $\dim X_w(b) \le d_w(b)$. 

We may reformulate the virtual dimension using the straight conjugacy class $C$ instead. Set $$d_w(C)=\frac 12 \big( \ell(w) + \ell(\eta(w))-\operatorname{def}(C)-\ell(C)\big).$$ Here $\ell(C)$ is defined as the length of any minimal length element in $C$. We have $\ell(C)=\<\nu_{[b]}, 2 \rho\>$ if $[b]=[\dot w]$ for any $w \in C$. Here, $\rho$ is the half sum of positive roots. 

We now state the result for the nonemptiness pattern and dimension formula for affine Deligne--Lusztig varieties in the affine Grassmannian. 

\begin{theorem}\label{thm:gr}
Let $\mu$ be a dominant coweight. Then $X_\mu(b) \neq \emptyset$ if and only if $\k(\mu)=\k(b)$ and $\nu_{[b]} \le \mu$. In this case, $$\dim X_\mu(b)=\<\mu-\nu_{[b]}, \rho\>-\frac{1}{2}\operatorname{def}(b).$$ 
\end{theorem}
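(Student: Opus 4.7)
The plan is to derive Theorem~\ref{thm:gr} from established results on affine Deligne--Lusztig varieties in the affine flag variety, via the natural projection $\pi \colon \Fl' \to \Gr'$. For necessity, the condition $\k(\mu) = \k(b)$ is immediate since the Kottwitz map factors through $\breve G'/\CK'$ and is $\s$-conjugation invariant. The inequality $\nu_{[b]} \le \mu$ is Mazur's inequality, proved in the residually split setting by Rapoport--Richartz via a standard reduction to a maximal torus.

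For sufficiency and the dimension formula, use the decomposition
\[
\CK' t^\mu \CK' = \bigsqcup_{w \in W_0 t^\mu W_0} \CI' \dot w \CI',
\]
which yields $\pi^{-1}(X_\mu(b)) = \bigsqcup_{w \in W_0 t^\mu W_0} X_w(b)$. Since $\CK'$ is $\s$-stable, the restricted projection $\pi \colon \pi^{-1}(X_\mu(b)) \to X_\mu(b)$ has fibers decomposing into classical Deligne--Lusztig varieties in the finite flag variety $\CK'/\CI'$ (attached to the reductive quotient of $\CK'$ and its Frobenius); the maximum-dimensional such piece has dimension $\ell(w_0)$ and is attained on a Zariski-dense open subset. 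Take $w^*_\mu$ to be the unique longest element of $W_0 t^\mu W_0$ (one checks $w^*_\mu = w_0 t^\mu$, so that $\ell(w^*_\mu) = \<2\rho, \mu\> + \ell(w_0)$ and, from the canonical form $w^*_\mu = w_0 \cdot t^\mu \cdot 1$ with $t^\mu \in {}^{\BS} \tW$ for dominant $\mu$, $\eta(w^*_\mu) = w_0$). A direct computation then gives
\[
d_{w^*_\mu}(b) = \tfrac{1}{2}\bigl( \ell(w^*_\mu) + \ell(\eta(w^*_\mu)) - \operatorname{def}(b) \bigr) - \<\nu_{[b]}, \rho\> = \<\mu - \nu_{[b]}, \rho\> + \ell(w_0) - \tfrac{1}{2}\operatorname{def}(b).
\]
Invoking the nonemptiness and dimension formula for $X_{w^*_\mu}(b)$ in the affine flag variety---namely, $X_{w^*_\mu}(b) \neq \emptyset$ and $\dim X_{w^*_\mu}(b) = d_{w^*_\mu}(b)$ precisely under the two stated conditions---and subtracting the generic fiber dimension $\ell(w_0)$ of the restriction of $\pi$, one arrives at
\[
\dim X_\mu(b) = d_{w^*_\mu}(b) - \ell(w_0) = \<\mu - \nu_{[b]}, \rho\> - \tfrac{1}{2}\operatorname{def}(b),
\]
as desired.

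The main obstacle is the input from the affine flag case in the last step: verifying both that $X_{w^*_\mu}(b) \neq \emptyset$ and that it attains its virtual dimension $d_{w^*_\mu}(b)$ exactly when $\k(\mu) = \k(b)$ and $\nu_{[b]} \le \mu$. This combines Gashi's theorem (the converse to Mazur's inequality) with GHKR/Hamacher/Viehmann/Zhu-type dimension computations, and is packaged cleanly via the He--Nie reduction method of Proposition~\ref{DLReduction1} together with the upper bound $\dim X_w(b) \le d_w(b)$ from He's CDM survey. A secondary technical point is to ensure that the generic fiber of $\pi \colon \pi^{-1}(X_\mu(b)) \to X_\mu(b)$ lies over the open piece indexed by $w^*_\mu$, so that the $\ell(w_0)$-dimensional subtraction is justified; this follows from the maximality of the length of $w^*_\mu$ in $W_0 t^\mu W_0$ and the open nature of the top Bruhat cell inside the finite flag $\CK'/\CI'$.
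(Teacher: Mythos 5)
The paper gives no proof of Theorem~\ref{thm:gr}: it is quoted from the literature, with the ``only if'' direction attributed to Rapoport--Richartz, the ``if'' direction to Gashi, and the dimension formula to G\"ortz--Haines--Kottwitz--Reuman, Viehmann, Hamacher, Zhu, and the general case to He's CDM survey. Your outline --- pass to the affine flag variety, single out the longest element $w^*_\mu=w_0t^\mu$ of $W_0t^\mu W_0$, compute $d_{w^*_\mu}(b)$, and subtract $\ell(w_0)$ --- is in fact the route taken in that general proof, and your computations are correct: $\ell(w_0t^\mu)=\langle 2\rho,\mu\rangle+\ell(w_0)$, $\eta(w_0t^\mu)=w_0$, the displayed value of $d_{w^*_\mu}(b)$, and the relation $\dim\pi^{-1}(X_\mu(b))=\dim X_\mu(b)+\ell(w_0)$ (the fibres over $X_\mu(b)$ are full finite flag varieties; note, however, that their intersections with the strata $X_w(b)$ are not classical Deligne--Lusztig varieties of the reductive quotient, since $g^{-1}b\s(g)$ lies in $\CK't^\mu\CK'$, not in $\CK'$).

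The genuine gap is that the entire difficulty is concentrated in the flag-level input you invoke: that $X_{w_0t^\mu}(b)\neq\emptyset$ exactly when $\kappa(\mu)=\kappa(b)$ and $\nu_{[b]}\le\mu$, and that then $\dim X_{w_0t^\mu}(b)=d_{w_0t^\mu}(b)$. This is not ``packaged cleanly'' by Proposition~\ref{DLReduction1} together with $\dim X_w(b)\le d_w(b)$: those tools only produce upper bounds, while the nonemptiness and the matching lower bound are the hard content (He's theorems on the lowest two-sided cell, resp.\ cordial elements; the He20 result quoted in Theorem~\ref{thm:gasf-fl} only covers superregular $\mu$). Gashi's theorem is a statement about $X_\mu(b)$ itself, so invoking it for $X_{w_0t^\mu}(b)$ either begs the question or silently uses that same lowest-cell analysis; without it your argument is circular, and with it the argument is essentially the literature proof the paper is citing. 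A secondary flaw: your justification that the generic point of $\pi^{-1}(X_\mu(b))$ lies in $X_{w^*_\mu}(b)$ rests only on openness of the big cell, but $\pi^{-1}(X_\mu(b))$ is not irreducible, so an open stratum need not meet the top-dimensional components (a priori it could even be empty). The correct way to get $\dim X_w(b)\le\dim X_{w^*_\mu}(b)$ for all $w\in W_0t^\mu W_0$ is the chain $\dim X_w(b)\le d_w(b)\le d_{w^*_\mu}(b)$ (using $\ell(w)\le\ell(w^*_\mu)$ and $\ell(\eta(w))\le\ell(w_0)$) combined with the equality at $w^*_\mu$ --- which again is exactly the deferred input. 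So the skeleton is right, but the step you defer is essentially the theorem itself.
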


The ``only if'' part of the nonemptiness pattern is due to Rapoport and Richartz~\cite{RR} and the ``if'' part is due to Gashi~\cite{Ga}. The dimension formula is proved in~\cite{GHKR06} and~\cite{Vi06} for split groups, in~\cite{Ham15} and~\cite{Zhu} for unramified groups, and in~\cite[Theorem 2.29]{He-CDM} in general. 

Combining Theorem~\ref{thm:gr} with Theorem~\ref{thm:le}, we obtain the following description of the nonemptiness pattern and dimension formula for affine Lusztig varieties in the affine Grassmannian. 

\begin{theorem}\label{thm:gasf-gr}
Let $\mu$ be a dominant coweight and $\g \in \breve G^{\rs}$. Then $Y_\mu(\g) \neq \emptyset$ if and only if $\k(\mu)=\k(\g)$ and $\nu_{\{\g\}} \le \mu$. In this case, $$\dim Y_\mu(\g)=\<\mu-\nu_{\{\g\}}, \rho\>-\frac{1}{2}\operatorname{def}(C_{\{\g\}})+\dim Y_{\g}.$$
\end{theorem}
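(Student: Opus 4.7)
The plan is to deduce Theorem~\ref{thm:gasf-gr} from the affine flag statement Theorem~\ref{thm:le} together with the Deligne--Lusztig dimension formula Theorem~\ref{thm:gr}, via a uniform reduction along the projection $\pi : \Fl \to \Gr$ applied on both sides.

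First I would use the decomposition $\CK^{\mathrm{sp}} t^\mu \CK^{\mathrm{sp}} = \bigsqcup_{w \in S_\mu} \CI \dot w \CI$ for a finite set $S_\mu \subset \tW$ (the Iwahori orbits in this $\CK^{\mathrm{sp}}$-double coset). Pulling back under $\pi$ yields $\pi^{-1}(Y_\mu(\g)) = \bigsqcup_{w \in S_\mu} Y_w(\g)$ and analogously $\pi^{-1}(X_\mu(b)) = \bigsqcup_{w \in S_\mu} X_w(b)$. The crucial observation is that for $g\CK^{\mathrm{sp}} \in Y_\mu(\g)$ and $k \in \CK^{\mathrm{sp}}$, the element $k^{-1}(g^{-1}\g g)k$ still lies in $\CK^{\mathrm{sp}} t^\mu \CK^{\mathrm{sp}}$ (and similarly on the Deligne--Lusztig side), so the entire $\CK^{\mathrm{sp}}/\CI$-fiber of $\pi$ over each point of $Y_\mu(\g)$ is contained in $\pi^{-1}(Y_\mu(\g))$. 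Since $\CK^{\mathrm{sp}}/\CI$ is the finite flag variety of dimension $\ell(w_0)$, this gives
\[
\dim Y_\mu(\g) + \ell(w_0) = \max_{w \in S_\mu} \dim Y_w(\g), \qquad \dim X_\mu(b) + \ell(w_0) = \max_{w \in S_\mu} \dim X_w(b),
\]
with the convention $\dim \emptyset = -\infty$.

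Next I would apply Theorem~\ref{thm:le} to each $w \in S_\mu$, which gives $\dim Y_w(\g) = \dim X_w(b) + \dim Y_\g$ together with the equivalence $Y_w(\g) \neq \emptyset \Leftrightarrow X_w(b) \neq \emptyset$. Taking maxima over $w \in S_\mu$, the two displays combine to $\dim Y_\mu(\g) = \dim X_\mu(b) + \dim Y_\g$ and $Y_\mu(\g) \neq \emptyset \Leftrightarrow X_\mu(b) \neq \emptyset$. Combining with Theorem~\ref{thm:gr} and matching the invariants through the commutative diagrams of \S\ref{sec:ass}---which yield $\k(\{\g\}) = \k([b])$, $\nu_{\{\g\}} = \nu_{[b]}$, and $\operatorname{def}(b) = \operatorname{def}(C_{\{\g\}})$ since the defect depends only on the associated straight conjugacy class---produces exactly the nonemptiness criterion and the dimension formula in the statement.

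The argument is essentially formal given Theorems~\ref{thm:le} and~\ref{thm:gr}. The only point deserving technical care is the fiber-dimension identity $\dim \pi^{-1}(Y_\mu(\g)) = \dim Y_\mu(\g) + \ell(w_0)$, which is immediate from the containment of full $\CK^{\mathrm{sp}}/\CI$-fibers inside the preimage. I do not anticipate a substantial obstacle.
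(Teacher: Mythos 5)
Your argument is correct and is essentially the deduction the paper intends: the paper proves Theorem~\ref{thm:gasf-gr} simply by ``combining Theorem~\ref{thm:gr} with Theorem~\ref{thm:le}'', and your fibration argument via $\pi:\Fl\to\Gr$ (using $\CK^{\mathrm{sp}} t^\mu \CK^{\mathrm{sp}}=\bigsqcup_{w\in W_0 t^\mu W_0}\CI\dot w\CI$, the $\CK^{\mathrm{sp}}/\CI$-bundle structure on both preimages, and the matching of $\k$, $\nu$ and $\operatorname{def}$ through the diagrams of \S\ref{sec:ass}) is exactly the standard way to make that combination precise. No gaps; this matches the paper's (implicit) route.
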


\begin{proof}
    Note that $\CK^{\mathrm{sp}} t^\mu \CK^{\mathrm{sp}}=\bigsqcup_{w \in W_0 t^\mu W_0} \CI \dot w \CI$. Set $Y_1=\sqcup_{w \in W_0 t^\mu W_0} Y_w(\g)$. We have the following Cartesian diagram
    \[
\xymatrix{Y_1 \ar[r] \ar[d] & \Fl \ar[d] \\
Y_\mu(\g) \ar[r] & \Gr.
}
    \]
    Note that the fibers of the projection map $\Fl \to \Gr$ are isomorphic to $\CK^{\mathrm{sp}}/\CI$. Then we have $$\dim Y_\mu(\g)=\dim Y_1-\dim \CK^{\mathrm{sp}}/\CI=\dim Y_1-\ell(w_0).$$

    Similarly, we set $X_1=\sqcup_{w \in W_0 t^\mu W_0} X_w(b)$. We have $\dim X_\mu(b)=\dim X_1-\ell(w_0)$. 

    In particular, $X_1 \neq \emptyset$ if and only if $X_\mu(b) \neq \emptyset$ and $Y_1 \neq \emptyset$ if and only if $Y_\mu(\g) \neq \emptyset$. By Theorem \ref{thm:gasf-gr}, $X_w(b) \neq \emptyset$ for some $w \in W_0 t^\mu W_0$ if and only if $\k(\mu)=\k(b)$ and $\nu_{[b]} \le \mu$. By \S\ref{sec:ass}, $\k(\g)=\k(b)$ and $\nu_{\{\g\}}=\nu_{[b]}$. Hence, by Theorem \ref{thm:le}, $Y_\mu(\g) \neq \emptyset$ if and only $\k(\mu)=\k(\g)$ and $\nu_{\{\g\}} \le \mu$.

    Suppose that $\k(\mu)=\k(\g)$ and $\nu_{\{\g\}} \le \mu$. By Theorem~\ref{thm:le} and Theorem~\ref{thm:gasf-gr} we have 
    \begin{align*}
        \dim Y_\mu(\g) &=\dim Y_1-\ell(w_0) \\ &=\max_{w \in W_0 t^\mu W_0} \dim Y_w(\g)-\ell(w_0) \\ &=\max_{w \in W_0 t^\mu W_0} \dim X_w(b)+\dim Y_\g-\ell(w_0) \\ &=\dim X_1-\ell(w_0)+\dim Y_\g \\ &=\dim X_w(b)+\dim Y_\g \\ &=\<\mu-\nu_{\{\g\}}, \rho\>-\frac{1}{2}\operatorname{def}(C_{\{\g\}})+\dim Y_{\g}. \qedhere
    \end{align*}
\end{proof}

We do not have a complete description of the nonemptiness pattern and dimension formula for affine Deligne--Lusztig varieties in the affine flag variety. However,~\cite{He20} provides the explicit answer for almost all cases. Combining Theorem~\ref{thm:le} with~\cite[Theorem 6.1]{He20}, we obtain the following result on affine Lusztig varieties in the affine flag variety. 

\begin{theorem}\label{thm:gasf-fl}
Let $\g \in \breve G^{\rs}$. Suppose that $\<\mu, \a\> \ge 2$ for any simple root $\a$ and $\nu_{\{\g\}}+2 \rho^\vee \le \mu$. Then, for any $x, y \in W_0$, $Y_{x t^\mu y}(\g) \neq \emptyset$ if and only if $\k(\g)=\k(\mu)$ and $\supp(y x)=\BS$. In this case, $$\dim Y_{x t^\mu y}(\g)=d_{x t^\mu y}(C_{\{\g\}})+\dim Y_{\g}.$$
\end{theorem}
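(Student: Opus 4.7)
The plan is to combine Theorem~\ref{thm:le} with the known nonemptiness and dimension formula for affine Deligne--Lusztig varieties in the affine flag variety due to~\cite[Theorem 6.1]{He20}. The passage from the Deligne--Lusztig side to the Lusztig side is essentially bookkeeping via the map $f_{\bG, \bG'}$, since under the hypotheses both sides are known to be governed by the same combinatorial data.

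More concretely, I would start by choosing a connected reductive group $\bG'$ over $F'$ associated with $\bG$, as constructed in \S\ref{sec:ass}. Using the length-preserving identification $\tW \cong \tW'$, this identification also induces $\tW \sslash \tW = \tW' \sslash \tW'$. Set $[b] = f_{\bG,\bG'}(\{\g\})$. Then the image of $\{\g\}$ in $\pi_1(\bG)_{\G_0} \times X_*(\bT)_\BQ^+$ matches the image of $[b]$ in $\pi_1(\bG')_{\G_0'} \times X_*(\bT')_\BQ^+$, so in particular $\k(\{\g\}) = \k([b])$ and $\nu_{\{\g\}} = \nu_{[b]}$. Furthermore $C_{\{\g\}}$ corresponds to $C_{[b]}$, so $d_{xt^\mu y}(C_{\{\g\}}) = d_{xt^\mu y}(b)$ by the definition recalled just before Theorem~\ref{thm:gr}.

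The next step is to translate the hypotheses $\<\mu,\a\>\ge 2$ for every simple root $\a$ and $\nu_{\{\g\}} + 2\rho^\vee \le \mu$ into the corresponding conditions on $\bG'$: under the identification above they read $\<\mu,\a\>\ge 2$ for every simple root of $\bG'$ and $\nu_{[b]} + 2\rho^\vee \le \mu$. Invoking~\cite[Theorem 6.1]{He20} then yields that $X_{xt^\mu y}(b) \ne \emptyset$ if and only if $\k([b]) = \k(\mu)$ and $\supp(yx) = \BS$, and in that case $\dim X_{xt^\mu y}(b) = d_{xt^\mu y}(b)$.

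Finally, I would apply Theorem~\ref{thm:le} to $w = xt^\mu y$: we have $\dim Y_w(\g) = \dim X_w(b) + \dim Y_\g$, with the convention $-\infty + n = -\infty$, so in particular $Y_w(\g) \ne \emptyset$ if and only if $X_w(b) \ne \emptyset$. Combining this with the translation above gives the nonemptiness criterion $\k(\{\g\}) = \k(\mu)$ and $\supp(yx) = \BS$, and the dimension formula
\[
\dim Y_{xt^\mu y}(\g) = d_{xt^\mu y}(b) + \dim Y_\g = d_{xt^\mu y}(C_{\{\g\}}) + \dim Y_\g.
\]
There is essentially no obstacle here once Theorem~\ref{thm:le} is in hand; the only point that requires a little care is verifying that the data $(\k,\nu)$ attached to $\g$ on the $\bG$-side agrees with the data attached to $b$ on the $\bG'$-side, which is built into the definition of $f_{\bG,\bG'}$ via the commutative diagrams of \S\ref{two-diagram}.
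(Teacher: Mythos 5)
Your proposal is correct and is essentially identical to the paper's own argument: the paper obtains Theorem~\ref{thm:gasf-fl} precisely by combining Theorem~\ref{thm:le} with \cite[Theorem 6.1]{He20}, transferring the data $(\k,\nu)$ and the straight conjugacy class through $f_{\bG,\bG'}$ exactly as you describe (including the identification $d_{xt^\mu y}(C_{\{\g\}})=d_{xt^\mu y}(b)$ via $\ell(C)=\<\nu_{[b]},2\rho\>$). Nothing further is needed.
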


\end{document}